\numberwithin{equation}{section}
\newtheorem{proposition}{\textbf{Proposition}}
\newtheorem{lemma}[proposition]{\textbf{Lemma}}
\newtheorem{theorem}[proposition]{\textbf{Theorem}}
\newtheorem{question}[proposition]{\textbf{Question}}
\theoremstyle{definition}
\newtheorem{definition}[proposition]{\textbf{Definition}}
\newtheorem{example}[proposition]{\textbf{Example}}
\newtheorem{remark}[proposition]{\textbf{Remark}}
\newcommand{\Lie}[1]{\operatorname{\textsl{#1}}}
\newcommand{\lie}[1]{\operatorname{\mathfrak{#1}}}
\newcommand{\sln}{\lie{sl}}
\newcommand{\SO}{\Lie{SO}}
\newcommand{\su}{\lie{su}}
\newcommand{\SL}{{\rm SL}}
\newcommand{\SU}{{\rm SU}}
\newcommand\C{{\mathbb C}}
\newcommand{\R}{{\mathbb R}}
\numberwithin{proposition}{section}
\begin{document}

\title[Real holomorphic sections of Deligne-Hitchin twistor space]{Real holomorphic 
sections of the Deligne-Hitchin twistor space}

\author[I. Biswas]{Indranil Biswas}

\address{School of Mathematics, Tata Institute of Fundamental
Research, Homi Bhabha Road, Mumbai 400005, India, and Mathematics Department, EISTI-University
Paris-Seine, Avenue du parc, 95000, Cergy-Pontoise, France}

\email{indranil@math.tifr.res.in}

\author[S. Heller]{Sebastian Heller}
\address{Fachbereich Mathematik,
Universit\"at Hamburg, 20146 Hamburg, Germany} 
\email{seb.heller@gmail.com}

\author[M. R\"oser]{Markus R\"oser}
\address{Institut f\"ur Differentialgeometrie, Welfengarten 1,
30167 Hannover, Germany} 
\email{roeser@math.uni-hannover.de}

\subjclass[2010]{53C26, 53C28, 53C43, 14H60, 14H70}

\keywords{Self-duality equation; Deligne-Hitchin twistor space; harmonic maps; Higgs bundle.}

\date{\today}

\begin{abstract}
We study the holomorphic sections of the Deligne-Hitchin moduli space of a
compact Riemann surface, especially the sections that are invariant under the natural
anti-holomorphic involutions of the moduli space. Their relationships with the harmonic maps
are established. As a by product, a question of Simpson on such sections, posed in
\cite{Si2}, is answered.
\end{abstract}
\maketitle

\tableofcontents

\section*{Introduction}

About thirty years ago, Hitchin introduced his famous self-duality equations on compact Riemann surfaces,
connecting a Hermitian metric $h$, a unitary connection $\nabla$ and a Higgs field $\Phi$,
as a 2-dimensional reduction of the Yang-Mills equation \cite{Hi1}. 
These self-duality equations are gauge invariant, and
the moduli space of their irreducible solutions, which is denoted
by $\mathcal M_{SD}$, is a smooth (finite-dimensional) manifold equipped with
rich geometric structures. In particular,
it admits a natural $L^2$-metric which is K\"ahler for two anti-commuting complex
structures, meaning it is a hyper-K\"ahler manifold. The first complex structure $I$ is
induced by a theorem in \cite{Hi1},
\cite{sim0} which identifies the irreducible solutions
of the self-duality equation, up to unitary gauge transformations,
with the stable
Higgs pairs up to complex gauge transformations; in one direction this map is
given by the forgetful map
\[(h,\,\nabla,\,\Phi)\,\longmapsto\,(\overline{\partial}^\nabla,\,\Phi)\, .\]
Since the moduli space of stable Higgs pairs has a natural complex structure
(this complex structure will be recalled in Section \ref{se1}) ---
the moduli space is in fact a smooth complex quasi-projective variety --- the
above forgetful map produces a
complex structure on $\mathcal M_{SD}$, which is denoted by $I$. The moduli space of Higgs bundles
is equipped with a holomorphic symplectic form and
an algebraically completely integrable system \cite{Hit2}.

The other complex structure $J$ on $\mathcal M_{SD}$
arises from the identification of $\mathcal M_{SD}$ with the moduli space of flat
complex connections due to a result of Donaldson, \cite{Do}, in the case of rank $n\,=\, 2$ and
by a result of Corlette in general \cite{Corlette}. The third complex structure $K$ is of
course $K = IJ = -JI$. The significance of the self-duality
equations in mathematics and theoretical physics is primarily due to, to quote
Hitchin, the various incarnations of the solutions, bringing together
geometry, topology, analysis and algebra in a harmonious way.

Every hyper-K\"ahler manifold $\mathcal M$ has a twistor space
associated to it. Topologically
it is just the product of the space $\mathcal M$ with the 2-sphere: 
\[
\{xI_p+yJ_p+zK_p
\,\mid\, p\,\in\,{\mathcal M},\, x^2+y^2+z^2\,=\,1\}\, .
\] 
It has a
tautological complex structure given by the natural complex structure on the
2-sphere and the above complex structure $xI_p+yJ_p+zK_p$ on ${\mathcal M}\times \{(x,y,z)\}$; the natural
projection of the twistor space to $\C P^1$ is evidently holomorphic. The importance of the twistor space construction
is due to the fact that the hyper-K\"ahler structure is encoded in holomorphic data on the twistor space.
In the case of the moduli space of solutions to the self-duality equations,
the corresponding twistor space, which is known as the Deligne-Hitchin moduli space and is
denoted by $\mathcal M_{DH}$, admits another interpretation which is due to Deligne; see \cite{Si}.
From Deligne's perspective, this Deligne-Hitchin moduli space $\mathcal M_{DH}$
is considered as the moduli space of $\lambda$-connections on the Riemann surface
$\Sigma$ glued
with the moduli space of $\lambda$-connections on the conjugate Riemann surface
$\overline\Sigma$ via the Riemann-Hilbert
isomorphism; for definitions and details see Section \ref{se1}. As a consequence, integrable 
surfaces like harmonic maps and minimal surfaces
can be described by sections of the twistor space $$\mathcal M_{DH}\,\longrightarrow\,\C P^1$$
satisfying certain reality conditions that depend on what the target space is. We note
that for the case of $G_\C\,=\,\SL(2,\C)$, these surfaces are lying in any of the following:
$S^3$, $H^3$ and its quotients, and their Lorentzian counterparts.

The main theme of this paper is to initiate investigations of real sections of the 
Deligne-Hitchin moduli space with an emphasis on their geometric meaning. There are 
two natural involutions of $\mathcal M_{DH}$, one covering the antipodal involution 
$\lambda\,\longmapsto\, -\overline{\lambda}^{-1}$
on $\C P^1$ (this does not have any fixed point) and the other covering $\lambda\,\longmapsto\, \overline{\lambda}^{-1}$
with the unit circle as fixed point set. For these two anti-holomorphic involutions, we obtain two 
classes of real sections by considering the two real forms of the complex Lie algebra $\mathfrak{sl}(2,\C).$ 
Under the extra condition that the real section is admissible (see Definition 
\ref{admissible}) those maps correspond to equivariant harmonic maps from the 
universal covering of the Riemann surface into the hyperbolic space, the space of 
oriented circles, the 3-sphere or the 3-dimensional anti-de Sitter space.

At this point we pause to describe the structure of the initial parts of the paper, before explaining
some of the results proved in the later parts. In Section \ref{se1}
we recall the basic facts 
about Hitchin's self-duality equations, the moduli space $\mathcal M_{SD}$ of 
solutions and its hyper-K\"ahler structure. We then describe the Deligne-Hitchin 
moduli space as the twistor space of $\mathcal M_{SD}$ from a differential geometry 
point of view. In Section \ref{sec2}, we derive some basic properties of sections 
which are used in the subsequent sections. In particular, we define the 
admissible sections (Definition 
\ref{admissible}) as those having a particular, simple form and giving rise to the 
most prominent examples of sections like the solutions of the self-duality 
equations. We also define the parity of a section and use this notion to prove that 
not all sections are admissible. In Section \ref{sec:real} we study real sections 
compatible with the two natural involutions $\rho$ and $\tau$ (Definition \ref{Def:sigmaReal}). We show that they come in two different flavors, 
a so-called negative one corresponding to the compact real form ${\rm SU}(2)$ of 
$\SL(2,\C)$ and a positive one corresponding to the split real form $\SL(2,\R)$ of 
$\SL(2,\C)$ (Definition \ref{Def:RealPosNeg}).

In Section \ref{sec:realm} we prove the existence of $\tau$-positive sections, which give counter-examples to a question 
of Simpson in \cite{Si2} (see \cite[\S~4, p.~235, Question]{Si}; reproduced
here in Question \ref{q1}); this 
is carried out in Proposition \ref{SDneg} and Theorem \ref{countertheorem}.

We prove that all $\tau$-negative sections which are admissible come from solutions of the 
self-duality equations, and give a geometric interpretation to the $\tau$-positive admissible 
sections as harmonic maps into the space of oriented circles in the 2-sphere. In Section 
\ref{sec:realp} we consider real sections compatible with the involution $\rho$ covering the involution 
$\lambda\,\longmapsto\,\overline\lambda^{-1}.$ Again, these $\rho$-real sections come in positive and negative 
flavor, where the negative ones are given by solutions of the harmonic map equations into the 
3-sphere, while the positive ones are related to harmonic maps into the anti-de Sitter space 
$AdS^3.$ The $\rho$-positive sections are equipped with a further integer invariant which turns 
out to be strongly related to the Toledo invariant for Higgs bundles in the Hitchin component. 
Here, and at other stages, we make use of the fix point set of the holomorphic involution $N$
covering $\lambda\,\longmapsto\,-\lambda$, in order to construct a correspondence
between the special 
subspaces of real sections for one real structure and the special subspaces of real sections for 
the other real structure. The most prominent example of this correspondence is certainly the 
well-known fact that minimal surfaces in $AdS^3$ are locally described by certain solutions of 
Hitchin's self-duality equations.

Finally, we also give a twistorial description of the 
hyper-symplectic structure on the moduli space of ``small'' solutions to the harmonic map 
equation for ${\rm SU}(2)$ in the spirit of \cite{HKLR}.

In recent years, there has been some interest in the automorphism groups of certain moduli spaces associated to
compact Riemann surfaces \cite{Ba}, \cite{BBS}, \cite{BiHe}.
We end the paper in Section \ref{auto} with an investigation of the
space of holomorphic automorphisms of the Deligne-Hitchin moduli space.
In particular, we prove that the 
holomorphic automorphisms of the Deligne-Hitchin moduli space, which are homotopic to the 
identity map, actually send the fibers of the twistor fibration to the fibers. 

\section{The Deligne-Hitchin moduli space ${\mathcal M}_{DH}$}\label{se1}

In this section we review the construction of the Deligne-Hitchin moduli space
${\mathcal M}_{DH}$ associated to a compact Riemann surface.

\subsection{The self-duality equations and their moduli space}\label{sec:SDaM}

Let $\Sigma$ be a compact connected Riemann surface of genus $g_\Sigma$, with $g_\Sigma \,\geq\, 2$.
The holomorphic cotangent bundle of $\Sigma$ will be denoted by $K$.

Given a
complex vector space $V$, we denote by $\underline{V}$ the trivial
$C^\infty$ complex vector bundle
$\Sigma\times V\,\longrightarrow\,\Sigma$ on $\Sigma$ with fiber $V$. Consider
$E\,=\,\underline{\C^2}$ equipped with
the standard Hermitian metric and complex volume form. Let $\nabla$ be an $\SU(2)$-connection on $E$, and
let $\Phi\,\in\, \Omega^{1,0}(\Sigma, \,\sln(E))$ be a Higgs field; here $\sln(E)$
denotes the subbundle of co-rank one of the endomorphism bundle $\text{End}(E)$ of $E$
defined by the sheaf of endomorphisms of trace zero. Note that $\sln(E)$
is identified with the vector bundle
$\underline{\sln(2, {\mathbb C})}$.

We recall that Hitchin's self-duality equations are 
\begin{eqnarray}\label{SDeq}
F^\nabla +[\Phi\wedge\Phi^*] &=&0 \\
\overline{\partial}^\nabla\Phi &=&0,\nonumber
\end{eqnarray}
where $F^\nabla$ is the curvature of $\nabla$. Here
we write $\overline{\partial}^\nabla$ for the $(0,1)$-part of the connection $\nabla$; note
that $\overline{\partial}^\nabla$
endows $E$ with the structure of a holomorphic vector bundle of rank two.
The second equation means that $$\Phi\,\in\, H^0(\Sigma,\,
K\otimes\sln(E))\, ,$$ where $K$ as before is the holomorphic cotangent
line bundle of the Riemann surface $\Sigma$; the vector bundle $K\otimes \sln(E)$ is
endowed with the holomorphic structure induced by the holomorphic structure on $E$
defined by the above operator $\overline{\partial}^\nabla$ and the natural holomorphic
structure on $K$. 

Let $$\mathcal G\,=\, \Gamma(\SU(E))$$ be the unitary gauge group for $E$ consisting of
$C^\infty$ sections $g\,\in\,
\Gamma(\mathrm{End}(E))$ satisfying the
pointwise identities $g^*g \,=\, \text{Id},\, \det g \,=\,1$.
Then $\mathcal G$ acts on the space of
pairs, consisting of a connection $\nabla$ on $E$ and an element $\Phi$ of $\Omega^{1,0}(\Sigma, \,\sln(E))$, on
the right as follows:
\[
(\nabla,\,\Phi).g \,=\, (g^{-1}\circ \nabla \circ g,\, g^{-1}\Phi g)\, .
\]
The space of solutions of the self-duality equations in \eqref{SDeq} is preserved under
the above action of $\mathcal G$.
A pair $(\nabla,\,\Phi)$ of the above type is called {\it irreducible}
if the following condition holds:

$(\nabla,\,\Phi).g\,=\,(\nabla,\,\Phi)$
only if $g\,\equiv\, \pm \text{Id}_E$.

In other words, the set of irreducible pairs is the locus where the
reduced gauge group $\mathcal G/\{\pm \text{Id}_E\}$ acts freely (note that the action of $\mathcal G$ is
not effective, since $-\text{Id}_E$ acts trivially). The moduli space of solutions
of the self-duality equations is 
$${\mathcal M}_{SD} \,=\,\{(\nabla,\,\Phi) \,\mid\, \text{~solution to \eqref{SDeq}}\}/\mathcal G\, .$$
This is a singular space of real dimension $12(g-1)$ and its smooth locus is
$${\mathcal M}^{irr}_{SD} \,=\, \{(\nabla,\,\Phi) \,\mid\,
\text{~irreducible solution to \eqref{SDeq}}\}/\mathcal G$$
meaning the smooth locus consists of the gauge-equivalence classes of irreducible solutions.

There is a hyper-K\"ahler metric on the manifold ${\mathcal M}^{irr}_{SD}\subset{\mathcal M}_{SD}$ which we will now recall.
Denote by $\mathcal A$ the space of all $\SU(2)$-connections on $E$, and consider the configuration space 
\begin{equation}\label{ca}
{\mathcal C} \,=\, {\mathcal A}\times \Omega^{1,0}(\Sigma,\,\sln(E))\,\cong\, T^*\mathcal A\, .
\end{equation}
Note that we may identify $\mathcal A$ with the space of holomorphic structures on
the vector bundle $\underline{\C^2}$, since
any unitary connection is uniquely determined by its $(0,1)$-part, and
this $(0,1)$-part in turn defines a holomorphic
structure on $\underline{\C^2}$. Conversely, given any holomorphic structure
on $\underline{\C^2}$, there is a unique unitary connection on $\underline{\C^2}$ whose
$(0,1)$-part coincides with the Dolbeault operator on $\underline{\C^2}$ defining the
holomorphic structure; this unique connection is known as the Chern
connection. This $\mathcal C$ is an infinite-dimensional complex manifold, in fact
it is an affine space modelled on the vector space
$\Omega^{0,1}(\Sigma,\,\sln(E))\times \Omega^{1,0}(\Sigma,\,\sln(E))$.

We have a natural K\"ahler metric on $\mathcal C$, compatible with the
above mentioned complex structure, given by the $L^2$-inner
product; indeed, for $(\alpha,\,\phi)\,\in\, \Omega^{0,1}(\Sigma,\,\sln(E))\times
\Omega^{1,0}(\Sigma,\,\sln(E))$, set 
\[
\|(\alpha,\,\phi)\|^2 \,=\, 2\sqrt{-1}\int_\Sigma \mathrm{Tr}(\alpha^*\wedge\alpha +\phi\wedge\phi^*)
\, .
\]
The above mentioned complex structure on $\mathcal C$ is denoted by $I$.
In addition, we have a further compatible complex structure, denoted by $J$, on $\mathcal C$ defined by
\begin{equation}\label{DefJ}
J(\alpha,\,\phi) \,=\, (\sqrt{-1}\phi^*,\,-\sqrt{-1}\alpha^*)\, .
\end{equation}
This complex structure anti-commutes with $I$ and thus setting $K \,=\, IJ$ we have a natural flat
hyper-K\"ahler structure $(\omega_I,\,\omega_J,\,\omega_K)$ on $\mathcal C$. It is
straightforward to check that the complex symplectic form $$\omega_I^\C \,=\,
\omega_J+\sqrt{-1}\omega_K$$ (which is holomorphic with respect to $I$) is just the natural
Liouville symplectic form on the cotangent bundle $\mathcal C \,=\, T^*\mathcal A$: 
\[ \omega_I^\C((\alpha,\,\phi),\,(\beta,\,\psi))\,=\, \int_\Sigma\mathrm{Tr}
(\psi\wedge\alpha - \phi\wedge\beta)\, .
\]

The action of $\mathcal G$ on $\mathcal C$ is compatible with the
above hyper-K\"ahler structure and it turns out that
the self-duality equations in \eqref{SDeq} are precisely the vanishing conditions for the associated
hyper-K\"ahler moment map, once we interpret $\Omega^2(\Sigma,\, \su(E))$ as (a subspace of) the dual of $\mathrm{Lie}(\mathcal G) = \Gamma(\Sigma,\, \su(E))$: 
\begin{eqnarray*}
\mu_I(\nabla,\,\Phi) &=& F^\nabla + [\Phi\wedge\Phi^*]\\
\mu_I^\C(\nabla,\,\Phi) &= &(\mu_J+\sqrt{-1}\mu_K)(\nabla,\,\Phi) = \overline{\partial}^\nabla\Phi.
\end{eqnarray*}
Thus, the moduli space $\mu^{-1}(0)/\mathcal G$ is formally a 
hyper-K\"ahler quotient; in fact, it can be shown, working with appropriate Sobolev completions, 
that its smooth locus ${\mathcal M}^{irr}_{SD} \,\subset\, {\mathcal M}_{SD}$ indeed inherits a hyper-K\"ahler structure. The complex structure on ${\mathcal M}^{irr}_{SD}$ 
corresponding to the complex structure $I$ (respectively, $J$) on $\mathcal C$ will also be 
denoted by $I$ (respectively, $J$); it should not create confusion as the space
under consideration should be clear from the context.

Define the group
\begin{equation}\label{cgg}
\mathcal G^\C \,=\, \Gamma(\Sigma,\,\SL(E))
\end{equation}
of all complex gauge 
transformations of $E$; this $\mathcal G^\C$ consist of all smooth sections $g\,\in\, \Gamma(\mathrm{End}(E))$ 
such that $\det g\,\equiv\, 1$. Then we expect from the standard equivalence between 
complex quotients and symplectic quotients in K\"ahler geometry that the complex 
analytic space $({\mathcal M}_{SD},\,I)$ is isomorphic to the quotient by the 
complex gauge transformations of a certain subset parametrizing polystable points in 
$(\mu_I^\C)^{-1}(0)$. Indeed, Hitchin has shown that the above mentioned equivalence 
between K\"ahler quotients and complex quotients holds in this infinite-dimensional 
setting \cite{Hi1}. The corresponding stability condition is the following.

\begin{definition}
A {\it Higgs bundle} is a holomorphic structure $\overline{\partial}$ on $E$ together with
a holomorphic
section $\Phi\,\in\, H^0(\Sigma,\, K\otimes \sln(E))$. A Higgs bundle $(E,\,\overline{\partial},\,
\Phi)$ is called {\it stable} if every $\Phi$-invariant holomorphic line subbundle $F\,
\subsetneq\, E$ satisfies
the inequality $\deg F\, <\,0$. A Higgs bundle $(E,\,\overline{\partial},\,\Phi)$ is
called {\it polystable} if it is isomorphic to a direct sum of Higgs line bundles of degree
zero.
\end{definition}

Hitchin proved that a Higgs bundle $(E,\,\overline{\partial},\,\Phi)$ is complex gauge 
equivalent to a solution to the self-duality equations if and only if it is polystable \cite{Hi1}. 
Moreover, if $(E,\,\overline{\partial},\,\Phi)$ is a polystable Higgs bundle, then the complex gauge
orbit of 
$(\overline{\partial},\,\Phi)$ contains a single $\mathcal G$-orbit of solutions of the 
self-duality equations. If $(E,\,\overline{\partial},\,\Phi)$ is actually a stable
Higgs bundle, then the corresponding
solutions of the self-duality equations are irreducible.
 In other words, the complex space $({\mathcal M}_{SD},\, I)$
is holomorphically isomorphic to the moduli space of polystable Higgs bundles, and its smooth locus $({\mathcal M}^{irr}_{SD},\, I)$ is biholomorphic to the moduli space of stable Higgs bundles.

To relate the complex manifold $({\mathcal M}_{SD},\,J)$ with the above moduli 
space, observe that the vanishing condition, at $(\nabla,\, \Phi)$,
for the complex moment map with respect 
to $J$ says that the connection $\nabla +\Phi + \Phi^*$ is actually flat. The analogous theorem 
identifying complex and K\"ahler quotients with respect to $J$ was proved by 
Donaldson and Corlette; their theorem says that $({\mathcal M}_{SD},\,J)$ is 
holomorphically isomorphic to the moduli space ${\mathcal M}_{DR}$ of reductive flat 
$\SL(2,\C)$-connections on $\Sigma$, which in turn is biholomorphic to the moduli space of 
reductive representations of the fundamental group $\pi_1(\Sigma)$ into $\SL(2,\C)$. 
Thus, as a complex manifold, ${\mathcal M}_{DR}$ has the following description:
\begin{equation}\label{hims}
{\mathcal M}_{DR} \,= \,({\mathcal 
M}_{SD},\,J)\,=\, \mathrm{Hom}^{red}(\pi_1(\Sigma),\,\SL(2,\C))/\SL(2,\C) \,=:\, \mathcal M_B\, .
\end{equation}
The smooth locus is given by 
the moduli space of irreducible flat $\SL(2,\C)$-connections $ {\mathcal M}^{irr}_{DR}$ (meaning
no line subbundle is preserved by the connection).
The moduli space $\mathcal M_B$ is an affine variety and hence cannot contain any compact complex 
submanifold of positive dimension. In contrast, we have the moduli space of flat
$\SU(2)$-connections as a compact 
subvariety of $({\mathcal M}_{SD},\, I)$ (this subvariety parametrizes
Higgs bundles with zero Higgs field). So the complex structures $I$ and $J$ are 
clearly not equivalent.

\subsection{The twistor space}

Consider the twistor space ${\mathcal M}_{DH}$ of the hyper-K\"ahler 
manifold ${\mathcal M}^{irr}_{SD}$. As a $C^\infty$ manifold we have
\[
{\mathcal M}_{DH} \,= \,{\mathcal M}^{irr}_{SD}\times S^2\,=\, {\mathcal M}^{irr}_{SD}\times 
\{(x,y,z)\,\in\, {\mathbb R}^3\, \mid\, x^2+y^2+z^2\,=\, 1\}\, .
\] 
The quaternionic structure on ${\mathcal M}^{irr}_{SD}$ induces a complex structure $\mathbb I$ on
${\mathcal M}_{DH}$ as follows: Identify $S^2$ with $\C P^1$ by stereographic projection and
choose a standard affine coordinate. Then, at a point $(m,\,\lambda)\,\in\, {\mathcal M}_{DH}$,
the complex structure $\mathbb I$ is given as follows:
\begin{equation}\label{eil}
{\mathbb I}_{m,\lambda} \,= \,I_\lambda\oplus {\mathbf i}\,\in\, \mathrm{End}(T_m
{\mathcal M}^{irr}_{SD}
\oplus T_\lambda\C P^1) \,=\, \mathrm{End}(T_{(m,\lambda)}{\mathcal M}_{DH})\, ,
\end{equation}
where 
\[
I_\lambda \,=\,\frac{1-|\lambda|^2}{1+|\lambda|^2}I + \frac{\lambda + \overline{\lambda}}{1+
|\lambda|^2}J -\frac{\sqrt{-1}(\lambda - \overline{\lambda)}}{1+|\lambda|^2}K
\] 
and $\mathbf i$ is the standard complex structure on $S^2\,=\,\C P^1$. From this
it follows immediately that the natural projection 
\begin{equation}\label{twistorfib}
\pi\,\colon\, {\mathcal M}_{DH}\,= \,{\mathcal M}^{irr}_{SD}\times \C P^1\,\longrightarrow\, \C P^1\,=\, S^2
\end{equation}
is holomorphic.

The manifold ${\mathcal M}_{SD}$ carries an isometric circle (the group
${\rm U}(1)$) action induced by the natural
circle action on the cotangent bundle $\mathcal C\,=\,T^*\mathcal A$ (see \eqref{ca}): 
\[ 
\exp ({\sqrt{-1}\cdot\theta})(\nabla,\,\Phi) \,=\, (\nabla,\, \exp({\sqrt{-1}
\cdot\theta})\Phi)\, .
\]
It is compatible with the complex structure $I$ but rotates $J$ and $K$.
Moreover, this circle action leaves the smooth locus ${\mathcal M}^{irr}_{SD}$ invariant.
In other words, we obtain
a holomorphic circle action on $({\mathcal M}_{DH},\,{\mathbb I})$ covering the standard
circle action on $\C P^1$ via 
\[
\exp ({\sqrt{-1}\theta})(\nabla,\,\Phi,\,\lambda) \,=\, (\nabla,\, \exp({\sqrt{-1}\theta})
\Phi,\, \exp({\sqrt{-1}\theta})\lambda)\, , \ \ \theta\, \in\, \mathbb R\, .
\]
It can be shown that this action extends to an action of $\C^*$ on ${\mathcal M}_{DH}$ covering the 
standard $\C^*$-action on $\C P^1 \,=\, \C\cup \{\infty\}$
given by multiplication. As a consequence of this action, for any 
$\lambda\,\neq\, 0,\,\infty$, the complex manifold $({\mathcal M}^{irr}_{SD},\,I_\lambda)$
(see \eqref{eil} for $I_\lambda$) is actually biholomorphic to $({\mathcal M}^{irr}_{SD},\,J)$ (note
that $J\, =\,I_1$). In order to describe these biholomorphisms
explicitly it is customary to use the $\lambda$-connections.

\subsection{$\lambda$-connections and the Hodge moduli space}\label{sec:lambdaConn}

Let us examine more closely the twistor space constructed above.
Take any $\lambda\, \in\, \mathbb C$. The complex structure $I_\lambda$ on
${\mathcal M}^{irr}_{SD}$ is the restriction of the complex structure $\widehat{I}_\lambda$
on $\mathcal C$ that sends
a tangent vector $$(\alpha,\,\phi)\,\in\, \Omega^{0,1}(\Sigma,\,\sln(E))\times
\Omega^{1,0}(\Sigma,\,\sln(E))$$ to
\[
\widehat{I}_\lambda(\alpha,\,\phi) \,=\, \frac{1}{1+|\lambda|^2}
\left((1-|\lambda|^2)\sqrt{-1}\alpha + 2\sqrt{-1}\lambda \phi^*,\, 
(1-|\lambda|^2)\sqrt{-1}\phi - 2\sqrt{-1}\lambda \alpha^* \right)\, .
\]
From this formula it follows that the following $\R$-linear map
$$(\Omega^{0,1}(\Sigma,\,\sln(E))\times \Omega^{1,0}(\Sigma,\,\sln(E)),\, \widehat{I}_\lambda)
\, \longrightarrow\,
(\Omega^{0,1}(\Sigma,\,\sln(E))\times\Omega^{1,0}(\Sigma,\,\sln(E)),\,\sqrt{-1})$$
\begin{equation}\label{ca2}
(\alpha,\phi)\,\longmapsto \,(\alpha,\,\phi)^\lambda \,:=\, (\alpha + \lambda\phi^*,\,\phi-\lambda\alpha^*)
\end{equation}
intertwines the complex structures.

\begin{definition}
A {\it generalized $\lambda$-connection} on $E\,=\,\underline{\C^2}$ consists of the following:
\begin{itemize}
\item a holomorphic structure on $E$
such that the corresponding holomorphic structure $\overline\partial$ on the trivial line bundle
$\det E\,=\, \underline{\C}$ coincides with the one given by the trivialization, in other
words, $\overline\partial$ coincides with the $(0,1)$-part of the de Rham differential
(hence the notation), and

\item a $C^\infty$ differential operator
\[
D\,\colon\, \Gamma(E)\,\longrightarrow\, \Omega^{1,0}(E)
\]
satisfying 
\[
D(fs) \,=\, \lambda\partial f\otimes s + fDs
\]
such that the differential operator on $\det E\, =\, \bigwedge^2 E$ it induces actually coincides with $\lambda \partial$.
\end{itemize}
A generalized $\lambda$-connection of the above type is called a {\it $\lambda$-connection}
if the differential operator $D$ is holomorphic (not just $C^\infty$), in other
words, if it satisfies the equation
\begin{equation}\label{dd-dd0}
D\overline{\partial}+\overline{\partial} D\,=\,0\, .
\end{equation}
A $\lambda$-connection is called {\it irreducible} if it does not admit any $D$-invariant holomorphic 
line subbundle.
\end{definition}

Let $\overline{\mathcal A}^\lambda$
denote the space of generalized $\lambda$-connections on $E$.
Note that if $\lambda\,=\,0$ then $\overline{\mathcal A}^\lambda$ is
the space of all pairs consisting of a holomorphic structure on $E$ together with
a $C^\infty$ $(1,0)$-form with values in the vector bundle $\sln(E)$.

The complex linearity of the map $(\alpha,\,\phi)\,\longmapsto\,
(\alpha,\,\phi)^\lambda$ in \eqref{ca2} implies that for any fixed
$\lambda \,\in\,\C$, there is
a holomorphic identification of complex manifolds
\[
(\mathcal C,\,I_\lambda)\,\longrightarrow\, \overline{\mathcal A}^\lambda\, ,
\quad (\nabla,\,\Phi)\,\longmapsto\,
(\overline{\partial}^\nabla + \lambda\Phi^*,\, \lambda\partial^\nabla + \Phi)\, ,
\]
where $\mathcal C$ is defined in \eqref{ca};
recall that an $\SU(2)$-connection on $E$ is uniquely determined by its component
of type $(0,1)$.

As $\mathcal M_{DH}$ is the twistor space of the hyper-K\"ahler manifold $\mathcal M_{SD}^{irr}$ it comes with a holomorphic section 
\[\omega^\C\,\in\, H^0(\mathcal M_{DH},\,\Lambda^2 T^*_F\otimes \pi^*{\mathcal O}_{\C P^1}(2)),\]
where $T_F\,=\, {\rm kernel}(d\pi) \,\subset\, T\mathcal M_{DH}$ is the tangent bundle along the fibers of $\pi$ (see also \cite{HKLR} for the general setup).

For every fixed $\lambda\,\in\, \mathbb C$, this section gives the associated holomorphic symplectic form
$\omega_\lambda^\C$ on $\pi^{-1}(\lambda) \,=\, ({\mathcal M}^{irr}_{SD},\, I_\lambda)$ given by 
\[
\omega_\lambda^\C \,=\, \omega_I^\C + \lambda\omega_I +
\lambda^2\overline{\omega_I^\C}\, ,
\]
while the complex moment map $\mu_\lambda^\C$ is given by 
\[
\mu_\lambda^\C \,= \,\mu_I^\C + \lambda \mu_I + \lambda^2\overline{\mu_I^\C}\, .
\]
It follows that the vanishing condition for the complex moment map $\mu_\lambda$ with
$\lambda\,\neq\, 0$ is equivalent to the vanishing of the curvature of the connection 
\[
\nabla^\lambda \,=\, \nabla + \lambda\Phi^* + \lambda^{-1}\Phi\, .
\]
This flatness condition can be interpreted as the integrability condition (as in \eqref{dd-dd0})
of
\[
\mathcal D(\lambda) \,=\, \lambda\partial^\nabla + \Phi
\]
on $E$ with respect to the holomorphic structure 
\[\overline{\partial}^\lambda \,=\, \overline{\partial}^\nabla + \lambda\Phi^*\, .\] 
From the general description
of the twistor space of a hyper-K\"ahler quotient (see \cite{HKLR}) we expect that, at least
formally, the fiber $\pi^{-1}(\lambda)$ (see \eqref{twistorfib} for the map $\pi$) is the
symplectic quotient of $(\mathcal C,\,I_\lambda, \,
\omega_\lambda^\C)$ by the complexified gauge group $\mathcal G^\C$
(defined in \eqref{cgg}), consequently, $\pi^{-1}(\lambda)$ is the
moduli space of (holomorphic) $\lambda$-connections. Note that $\pi^{-1}(0)$
is the moduli space of stable Higgs bundles, because a holomorphic $0$-connection is just a
Higgs field.

Let ${\mathcal M}_{Hod}$ 
be the moduli space of triples of the form $(\overline{\partial},\,\mathcal D,\,\lambda)$ consisting of a 
holomorphic structure $\overline{\partial}$ on $E$, a complex number $\lambda$ and an irreducible 
(holomorphic) $\lambda$-connection $\mathcal D$.
In view of the above discussion, we have a natural isomorphism \[ \pi^{-1}(\C)\,\cong\,{\mathcal 
M}_{Hod},\quad (\nabla,\,\Phi,\,\lambda)\,\longmapsto\, (\overline{\partial}^\lambda,\,\mathcal 
D(\lambda),\,\lambda)\, , \] where $\pi$ is given in \eqref{twistorfib}.

The $\C^*$-action on $\mathcal M_{DH}$ translates under this identification into the action
$$t\cdot (\overline{\partial},\, \mathcal D,\,\lambda)\,=\, (\overline{\partial},\,
t\mathcal D,\, t\lambda)\, , \ \ t\,\in\,\C^*\, .$$ In particular, there are natural biholomorphisms
\[
\pi^{-1}(\C^*) \,\cong\, \C^*\times \mathcal M^{irr}_{DR} \,\cong\, \C^*\times \mathcal M^{irr}_{B}
\]
(see \eqref{hims}).

\subsection{Involutions and real structures}\label{sec:Involutions}

The twistor space $\mathcal M_{DH}$ is equipped with a number of structures,
some of which will be recalled here. All maps below involving $\mathcal M_{DH}$ will
be defined with respect to the smooth trivialization $\mathcal M_{DH}\,=\,
\mathcal M^{irr}_{SD}\times \C P^1$.

First, we have the holomorphic involution 
\begin{equation}\label{N}
N\,:\, \mathcal M_{DH}\,\longrightarrow\,\mathcal M_{DH}\, ,\quad (\nabla,\,\Phi,\,\lambda)
\,\longmapsto\, (\nabla,\,-\Phi,\,-\lambda)\, .
\end{equation}
This is just $-1\,\in\, {\rm U}(1)\,=\, S^1$ acting
in the context of the $S^1$-action described above. Note that the involution of
$\mathcal M^{irr}_{SD}$ given by $(\nabla,\,\Phi)\,\longmapsto\, (\nabla,\,-\Phi)$ is holomorphic with
respect to the complex structure $I$ while being anti-holomorphic with respect to
the complex structure $J$. The fixed point locus for this involution
corresponds to the space of solutions $(\nabla,\,\Phi)$ of the self-duality equations such that
there exists a gauge transformation $g\,\in\,\mathcal G$ satisfying the following two
equations:
\[
\nabla.g \,=\, \nabla,\quad g^{-1}\Phi g \,=\, -\Phi\, .
\]

Furthermore, we have the following standard real structure $\tau$ on $\mathcal M_{DH}$, which is
there on any twistor space of a hyper-K\"ahler manifold covering the antipodal map:
\begin{equation}\label{et}
\tau\,:\, \mathcal M_{DH}\,\longrightarrow\,\mathcal M_{DH},\quad
(\nabla,\,\Phi,\,\lambda) \,\longmapsto\, (\nabla,\,\Phi,\,-
\overline{\lambda}^{-1})\, .
\end{equation}

The two maps $N$ and $\tau$ constructed in \eqref{N} and \eqref{et} respectively, actually commute;
in fact, they combine to give a third anti-holomorphic involution $\rho$ defined as 
\begin{equation}\label{defC}
\rho \,:= \,\tau\circ N\,:\, \mathcal M_{DH}\,\longrightarrow\,\mathcal M_{DH}\, ,\quad 
(\nabla,\,\Phi,\,\lambda) \,\longmapsto\, (\nabla,\,-\Phi,\,
\overline{\lambda}^{-1}).
\end{equation}

In terms of the $\lambda$-connections, the involution
$N$ constructed in \eqref{N} becomes the involution
$$(\overline{\partial}, \,\mathcal D,\,\lambda)\,\longmapsto\,
(\overline{\partial}, \,-{\mathcal D},\,-\lambda)\, .$$

The anti-holomorphic involution $\tau$ in \eqref{et}
acts on $\pi^{-1}(\C^*)\,\subset\, \mathcal M_{Hod}$ by the rule
\[
\tau(\overline{\partial}^\lambda, \,\mathcal D(\lambda),\,\lambda) \,=\, (\overline{\partial}^\nabla -
\overline{\lambda}^{-1} \Phi^*,\, -\overline{\lambda}^{-1}\partial^\nabla + \Phi,
\,-\overline{\lambda}^{-1}) \,=\, (\overline{\lambda}^{-1}\overline{\mathcal D(\lambda)}^*,\, -\overline{\lambda}^{-1}\overline{(\overline{\partial}^\lambda
)}^* ,\,-\overline{\lambda}^{-1})\, .
\]

The right-hand side should be interpreted as follows.
The standard Hermitian structure on ${\mathbb C}^2$ identifies $(\overline{\mathbb C^2})^*$.
This isomorphism produces an isomorphism $E\,\cong\,\overline{E}^*$, because
$E\,=\,\underline{\C^2}$. Using this isomorphism, $\nabla^\lambda$ transforms into
the flat connection
$$\overline{\nabla^\lambda}^* \,=\,
\partial^\nabla -\overline{\lambda} \Phi + \overline{\partial}^\nabla 
- \overline{\lambda}^{-1}\Phi^*\, ,$$ which is a 
$(-\overline\lambda^{-1})$-connection on the Riemann surface $\Sigma$. 
In terms of the trivialization $$\pi^{-1}(\C^*)\,\cong\, \mathcal M_{B} \times\C^*$$
we may express $\tau$ as
$$\tau(R,\,\lambda) \,=\, (\tau'(R),\,-\overline{\lambda}^{-1})\, .$$ It follows
that $\tau'$ is the anti-holomorphic involution of $\mathcal M_B$ that
sends a representation $R$ to its conjugate transpose, in other words,
\[
\tau(\rho)(\gamma)\,=\, \left(\overline{R(\gamma)}^{-1}\right)^t\, ;
\]
this is in accordance with Simpson's discussion in \S~4 of \cite{Si}.

\subsection{The moduli space}

It can be shown that $(\mathcal M^{irr}_{SD}(\Sigma),\,-I)$ is isomorphic to
$(\mathcal M^{irr}_{SD}(\overline{\Sigma}),\,I)$, or in other words, $$\pi^{-1}(\infty) 
\,=\, \mathcal M_{Higgs}(\overline{\Sigma})\, .$$ Thus, applying the same construction as above
with $\Sigma$ replaced by $\overline\Sigma$, we get that
\[
\pi^{-1}(\C P^1\setminus\{0\}) \,\cong\, \mathcal M_{Hod}(\overline{\Sigma})\, ,
\quad (\nabla,\,\Phi,\,\lambda)\,\longmapsto\, (\partial^\nabla + \lambda^{-1}\Phi,\,
\lambda^{-1}\overline{\partial}^\nabla + \Phi^*,\,\lambda^{-1})\, ,
\]
where $\pi$ is the projection in \eqref{twistorfib}
(note that $\Omega^{1,0}(\Sigma) \,=\, \Omega^{0,1}(\overline{\Sigma})$); the
above term $$(\partial^\nabla + \lambda^{-1}\Phi,\,
\lambda^{-1}\overline{\partial}^\nabla + \Phi^*,\,\lambda^{-1})$$
is a holomorphic vector bundle on $\overline\Sigma$ equipped
with a $\lambda^{-1}$-connection. 
We can thus think of the twistor space $\mathcal M_{DH}$ as the union of the two Hodge-moduli spaces
$\mathcal M_{Hod}(\Sigma),\,\mathcal M_{Hod}(\overline{\Sigma})$ glued together
along the inverse images of $\C^* \,=\,
\C P^1\setminus\{0,\,\infty\}$ via 
\[
(\overline{\partial},\,{\mathcal D},\,\lambda) \,\sim\, (\lambda^{-1}\mathcal D, \,
\lambda^{-1}\overline{\partial}, \,\lambda^{-1})
\]
for any $(\overline{\partial},\,{\mathcal D},\,\lambda)\,\in\,
\mathcal M_{Hod}(\Sigma)$ with $\lambda\,\neq\, 0$.
The anti-holomorphic involution $\tau$ in
\eqref{et} acts on $$\pi^{-1}(\C^*)\,\subset\, \mathcal M_{Hod}$$ by 
\[
\tau(\overline{\partial}^\lambda,\, \mathcal D(\lambda),\,\lambda)
\,=\, (\overline{\partial}^\nabla -\overline{\lambda}^{-1} \Phi^*,\, -
\overline{\lambda}^{-1}\partial^\nabla + \Phi,\, -\overline{\lambda}^{-1})\,\sim\,
(\partial^\nabla -\overline{\lambda} \Phi,\,
-\overline{\lambda}\overline{\partial}^\nabla + \Phi^*,\, -\overline{\lambda})\, .
\]
Under the earlier mentioned isomorphism $E\,\cong\,\overline{E}^*$ given by the standard Hermitian
structure on ${\mathbb C}^2$, we may interpret $$(\partial^\nabla -\overline{\lambda} \Phi,\,
-\overline{\lambda}\overline{\partial}^\nabla + \Phi^*,\, -
\overline{\lambda})$$ as a holomorphic structure with
$(-\overline{\lambda})$-connection on
$\overline{E}^*$, viewed as a holomorphic bundle on $\overline\Sigma$.

\subsection{Special features of the rank $2$ case}\label{sec:Conj_vs_adj}

The above discussions are valid for higher ranks. In the rank two case under
consideration, the standard complex symplectic form on ${\mathbb C}^2$
produces a complex symplectic form on the fibers of the vector bundle
$E \,=\, \underline{\C^2}$; this complex symplectic form on $E$ will be denoted
by $\Omega$. We note that $\Omega$ produces an isomorphism $E\,\cong\, E^*$.
To describe it explicitly, we may take 
\[
\Omega \,=\, \epsilon_1\wedge \epsilon_2\, ,
\] 
where $\epsilon_1,\,\epsilon_2$ is the dual basis to the standard orthonormal basis
$e_1,\,e_2$ of $\C^2$. Thus, $\overline{E}^*$ is isomorphic to $\overline E$. More generally,
any Higgs bundle, with $E$ as the underlying bundle, is isomorphic to the
dual Higgs bundle. Let $(\nabla,\,\Phi)$ be a solution to the self-duality equations. Invoking the
identification $$(\overline{\partial}^\nabla,\,
\mathcal D(\lambda),\,\lambda) \,\longleftrightarrow\, \nabla^\lambda$$ we shall often
interpret $\tau(\nabla,\,\Phi,\,\lambda) = (\nabla,\,\Phi,\,-\overline{\lambda}^{-1})$ as $\overline{\nabla^{-\overline{\lambda}^{-1}}}$, where
$\tau$ is constructed in \eqref{et}. 

The above construction can be explained as follows. Consider
\[\delta\,=\,\begin{pmatrix} 0 & 1 \\ -1 & 0 \end{pmatrix} ,\]
the Gram-matrix of $\Omega$. 
Then, for any $A\,\in\,\sln(2,\C)$ we have by a short calculation that 
\[-A^*\,=\, -\overline{A}^t\,=\, \delta^{-1}\overline{A}\delta\,.\]
Therefore, viewing $\delta$ as a unitary gauge transformation, a unitary connection $\nabla$ satisfies
the equation 
\[\nabla\,\cong\, \nabla^{\overline{E}^*}\,=\,\overline{\nabla}.\delta\, ,\]
and the family of flat connections corresponding to a solution $(\nabla,\,\Phi)$ of the self-duality equations can be written as
\[\nabla^\lambda\,=\,\nabla+\lambda^{-1}\Phi+\lambda \Phi^*
\,=\,\nabla+\lambda^{-1}\Phi-\lambda\delta^{-1}\overline{\Phi}\delta\, .\]
Taking complex conjugates and gauging with $\delta$ using $\delta^2 \,=\,
-\mathrm{id}$ yields the following:
\begin{equation}\label{SD_real}
\begin{split}
\overline{\nabla^\lambda}.\delta &= \overline{\nabla}.\delta+\overline{\lambda}^{-1}\delta^{-1}\overline{\Phi}\delta - \overline{\lambda} \delta^{-2} \Phi\delta^2\\
&= \nabla - \overline{\lambda}^{-1}\Phi^*-\overline{\lambda}\Phi \\
&= \nabla^{-\overline{\lambda}^{-1}}.
\end{split}
\end{equation}

\section{Basic properties of sections of $\mathcal M_{DH}$}\label{sec2}

\begin{definition}
A holomorphic section $$s\, :\, {\mathbb C}P^1\,\longrightarrow\,
{\mathcal M}_{DH}$$ of the fibration $\pi\, :\, {\mathcal M}_{DH}\,\longrightarrow\,
{\mathbb C}P^1$ is called {\it irreducible} if
\begin{itemize}
\item the $\lambda$-connection $s(\lambda)$ is irreducible for every
$\lambda\,\in\,\mathbb CP^1\setminus\{0,\, \infty\}$, and

\item the Higgs bundles $s(0)$ and $s(\infty)$ over $\Sigma$ and $\overline\Sigma$
respectively are stable.
\end{itemize}
\end{definition}

Note that the image of an irreducible section $s$ is contained in the smooth locus 
of $\mathcal M_{DH}$; see \cite{Si}. Instead of working with smooth 
$\lambda$-connections, it is sometimes necessary to deal with (integrable) 
$\lambda$-connections which are only of class $C^k$, and there is no
essential difficulty in doing so. In fact, we
are only dealing with $\lambda$-connections of class $C^k$ which 
are gauge equivalent (by gauge transformations of class $C^{k+1}$) to some smooth 
$\lambda$-connections.

\begin{lemma}\label{lift-section}
Let $s$ be an irreducible holomorphic section of the fibration $$\pi\,:\,\mathcal M_{DH}\,
\longrightarrow\, \mathbb CP^1\, .$$
For every $k\,\in\,\mathbb N^{\geq2}$, there exists a holomorphic lift $\widehat s$ of $s$ on
$\mathbb C\,\subset\,\mathbb CP^1$ to the space of $\lambda$-connections of class $C^k$.
\end{lemma}

\begin{proof}
Locally on sufficiently small open subsets of $\C$ there are holomorphic lifts of $s$ to the 
infinite-dimensional space of $\lambda$-connections, by the construction of the 
Hodge moduli space. These local lifts give then a cocycle with values in the Banach 
Lie group of gauge transformations of class $C^k$ (where we can choose $k$ 
to be arbitrarily large). Note that after this choice of local lifts, the cocycle is 
uniquely determined as the section $s$ is irreducible. Since $\mathbb C$ is a Stein 
manifold, the generalized Grauert theorem, \cite{Bu}, tells us that the cocycle is 
trivial, which enables us to find a global lift on $\mathbb C$; see the proof of 
Theorem 8 in \cite{He3} for more details.
\end{proof}

\begin{remark}\label{conne-lift}
Instead of working with the lift ${\widehat s}(\lambda)\,=\,(
\overline{\partial}(\lambda),
\,\mathcal D(\lambda),\, \lambda)$ we will often work with the corresponding family of flat connections
\[\lambda\,\longmapsto\,\nabla^\lambda\, :=\, \overline{\partial}(\lambda)+\lambda^{-1}\mathcal D(\lambda)\, .\]
\end{remark}

\subsection{Admissible Sections}

\begin{definition}\label{admissible}
We call a holomorphic section $s$ of $\mathcal M_{DH}$ {\it admissible} if it admits a
lift $\widehat s$ on $\mathbb C$ of the form
\[{\widehat s}(\lambda)\,=\,(\overline{\partial}+\lambda \Psi,\,\lambda\partial+\Phi,\, \lambda)\]
for a Dolbeault operator $\overline\partial$ of type $(0,1)$, a
Dolbeault operator $\partial$ of type $(1,0)$, a $(1,0)$-form $\Phi$
and a $(0,1)$-form $\Psi$, such that $(\partial,\Psi)$ is a semi-stable Higgs pair on $\overline\Sigma.$
\end{definition}

\begin{remark}
By definition of a lift, $\widehat s(0)\,=\,(\overline{\partial},\,\Phi)$ is semi-stable as it
represents a point in the moduli space
of semi-stable Higgs bundles on $\Sigma$ -- the fiber over $\lambda\,=\,0.$ The lift $\widehat s$ on
$\mathbb C$
yields a lift of $s$ on $\mathbb CP^1\setminus\{0\}$ via
\[\widetilde{\lambda}\,\longmapsto\, (\partial+\widetilde{\lambda}\Phi,
\,\widetilde{\lambda}\overline{\partial}+ \Psi, \, \widetilde{\lambda})_{\overline\Sigma}\, ,\]
where $\widetilde{\lambda}\,=\,1/\lambda.$ Hence, the above definition actually treats $0$ and $\infty$ in a symmetric way.
\end{remark}

\begin{example}\label{ex:basic}\mbox{}
\begin{enumerate}[(i)]
\item A solution to Hitchin's self-duality equations induces an admissible section $s$; see the discussion
in Section \ref{se1}. 
Moreover, this section is irreducible if it is irreducible at some (any) point $\lambda\,\in\,
\mathbb CP^1$; see \cite{Hi1}.

\item A harmonic map $f\,\colon \,\Sigma\,\longrightarrow\, {\rm SU}(2)$ gives rise to a family of flat
connections of the form $\nabla + \lambda^{-1} \Phi - \lambda\Phi^*$, where $\nabla$ is the pullback of the Levi-Civita
connection on $S^3\,\cong\, {\rm SU}(2)$ and $\Phi-\Phi^* \,=\, 2f^{-1}df$ (see, e.g. \cite{Hi}). The flatness of the family $\nabla^\lambda$ is equivalent to $(\nabla,\,\Phi)$ solving the signed self-duality equations
\begin{equation*}
\begin{split}
F^\nabla- [\Phi\wedge\Phi^*] \, &=\, 0\\
\overline{\partial}^\nabla\Phi\, &=\, 0\, .
\end{split}
\end{equation*}
If $f$ is a minimal surface which is not contained in a totally geodesic $S^2\,\subset\, {\rm SU}(2)$, and the genus of
$\Sigma$ is at least two, then this family of flat connections
is generically stable and the Higgs pair $(\overline{\partial}^\nabla,\,\Phi)$ is stable \cite{He}. Thus, such a minimal surface induces an admissible section
as well.
However, such a section is not irreducible, because it gives the trivial gauge class at $\lambda\,=\, \pm1.$
\end{enumerate}
\end{example}

We are now going to investigate under which conditions an irreducible section $s$ is admissible. To that end we 
consider lifts of $s$ on $\mathbb C\,\subset\,\mathbb CP^1$ and on $\mathbb CP^1\setminus\{0\}\,\subset\,\mathbb CP^1$ given by families of flat connections of the form
\[^+\nabla^\lambda\,=\,\lambda^{-1}\Phi_{-1}+\nabla+\lambda\Phi_1+\cdots\]
and
\[^-\nabla^\lambda\,=\,\lambda\Psi_1+\widetilde\nabla+\lambda^{-1}\Psi_{-1}+\cdots\, . \]
These are provided by Lemma \ref{lift-section} and Remark \ref{conne-lift}. By
construction and irreducibility, there exists, locally, a holomorphic $\lambda$-family of
$\text{GL}(2,\mathbb C)$-gauge transformations $g$ defined on simply connected subsets $\lambda\,\in\, U\,\subset\,\mathbb C\setminus\{0\},$ gauging $^+\nabla^\lambda$ to $^-\nabla^\lambda$.
This family of gauge transformations
is unique up to scaling by a holomorphic function only depending on $\lambda$; indeed,
this is a consequence of irreducibility.
This determines a holomorphic line bundle over $\mathbb C\setminus\{0\}$ whose
fibers are holomorphic (in fact parallel) endomorphisms with respect to the holomorphic structures
induced by $^\pm\nabla^\lambda$. Since any line bundle over $\C \setminus\{0\}$ is
trivial, we may consider a holomorphic section $g$ thereof without zeros. Hence
$g(\lambda)$ is a gauge transformation such that we have
\begin{equation}\label{ge2}
^+\nabla^\lambda.g(\lambda) \,=\,\, ^-\nabla^\lambda
\end{equation}
for all $\lambda\,\in\,\mathbb C\setminus \{0\}$, and the map defined by
$\lambda\,\longmapsto\, g(\lambda)$ is holomorphic.

For any point $x\,\in\,\Sigma$ we get an element $g_x$ in the loop group
$L\mathrm{GL}(2,\mathbb C)$ by setting $$g_x(\lambda)\,=\, g(x,\,\lambda)$$ (see 
\cite{PrSe} for details on the theory of loop groups). Let us assume for a moment that
for any $x\,\in\,\Sigma$ the loop $g_x$ is in the big cell, meaning it has a factorization
\[g_x\,=\,g_x^+ g_x^-\]
such that $g_x^+$ and $g_x^-$ extend holomorphically to $\lambda\,=\,0$ and $\lambda\,=\,\infty$
respectively, in $\text{GL}(2,\mathbb C)$. After a normalization (see Theorem 8.1.2 in
\cite{PrSe}), we get a global (on $\Sigma)$ smooth factorization 
\[g\,=\,g^+ g^-\]
into positive and negative $\lambda$-dependent gauge transformations.

Now, the equation in \eqref{ge2} is equivalent to the equation
\[^+\nabla^\lambda.g^+(\lambda)\,=\, ^-\nabla^\lambda.(g^-)^{-1}(\lambda)\, .\]

Expanding both sides of the above equation in powers of $\lambda$, we see that the left hand side of the
above equation has only one $\lambda^{-1}$-term, while the right hand side has
only one $\lambda$-term. Hence, under the above assumption of $g_x$ lying in the big
cell for all $x\,\in\,\Sigma$, we have an admissible section.

\begin{proposition}\label{strange-prop}
Let $^+\nabla^\lambda$ and $^-\nabla^\lambda$ be lifts of an irreducible section $s$
of $\mathcal M_{DH}$ over $\mathbb C$ and $\mathbb CP^1\setminus\{0\}
\,\subset\,\mathbb CP^1$ respectively, and let $g$ be the ${\rm GL}(2,\mathbb C)$-valued $\lambda$-dependent gauge transformation between them.
Then, the property of $g_x$ being in the big cell for all $x\,\in\,\Sigma$ does not
depend on the choice of lifts $^+\nabla^\lambda$ and $^-\nabla^\lambda$. 
If this property is satisfied, then $s$ is admissible.
\end{proposition}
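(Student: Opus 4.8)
The plan is to prove the two assertions in turn: first that the \emph{big cell} condition is intrinsic to $s$ (independent of the lifts), and then, granting that condition, that $s$ is admissible --- the latter being essentially the power-counting argument sketched in the paragraphs preceding the statement. For the independence I would first describe how two admissible lifts are related. If ${}^+\nabla^\lambda$ and ${}^+\widetilde\nabla^\lambda$ are holomorphic lifts of $s$ over $\C$, then for each $\lambda\in\C$ they represent the same point of $\mathcal M_{DH}$ and hence are gauge equivalent; by irreducibility the intertwiner is unique up to a scalar, and by the Grauert/Stein argument of Lemma \ref{lift-section} it depends holomorphically on $\lambda$ on all of $\C$. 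Thus ${}^+\widetilde\nabla^\lambda={}^+\nabla^\lambda.h^+$ for a $\mathrm{GL}(2,\C)$-gauge transformation $h^+$ extending holomorphically and invertibly across $\lambda=0$, i.e.\ a positive loop, and likewise two lifts over $\C P^1\setminus\{0\}$ differ by a negative gauge transformation $h^-$, holomorphic and invertible across $\lambda=\infty$. Writing $g,\widetilde g$ for the corresponding transition transformations, a direct check from ${}^+\nabla^\lambda.g={}^-\nabla^\lambda$ shows that $(h^+)^{-1}gh^-$ already satisfies ${}^+\widetilde\nabla^\lambda.\bigl((h^+)^{-1}gh^-\bigr)={}^-\widetilde\nabla^\lambda$, so we may take $\widetilde g=(h^+)^{-1}gh^-$ with no residual scalar; restricting to $x\in\Sigma$ gives $\widetilde g_x=(h^+_x)^{-1}g_x h^-_x$.

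The key structural input is then the invariance of the big cell of $L\mathrm{GL}(2,\C)$ under left multiplication by positive loops and right multiplication by negative loops: if $g_x=g_x^+g_x^-$ is a factorization of the required type, then $\widetilde g_x=\bigl((h^+_x)^{-1}g_x^+\bigr)\bigl(g_x^-h^-_x\bigr)$ is again one, since $(h^+_x)^{-1}g_x^+$ extends across $0$ and $g_x^-h^-_x$ across $\infty$. Hence $\widetilde g_x$ lies in the big cell if and only if $g_x$ does, for every $x$. The one point needing care is the scalar ambiguity of $g$ for a fixed pair of lifts: here the observation that both lifts induce the same connection on $\det E$ is decisive. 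Since a $\lambda$-connection induces $\lambda\partial$ on $\det E=\underline\C$ while $\overline\partial(\lambda)$ induces the trivial $(0,1)$-operator, the family $\nabla^\lambda$ induces the fixed trivial flat connection on $\det E$ for every $\lambda$; consequently $\det g$ is parallel, hence constant on $\Sigma$ and a function of $\lambda$ alone. The existence of a factorization even at a single point forces $\det g$ to have zero winding, and with this normalization the remaining scalar freedom has zero winding and therefore preserves big-cell membership. So the condition is well posed and, by the previous observation, independent of the lifts.

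Granting the big cell property, admissibility follows as indicated before the statement. The global normalized factorization $g=g^+g^-$ (Theorem 8.1.2 in \cite{PrSe}) yields a single connection $\mathcal A(\lambda):={}^+\nabla^\lambda.g^+={}^-\nabla^\lambda.(g^-)^{-1}$. From the first expression $\mathcal A$ has at worst a simple pole at $\lambda=0$, from the second at worst a simple pole at $\lambda=\infty$, while it is holomorphic on $\C^*$; hence its connection form is the Laurent polynomial $\lambda^{-1}\Phi+\mathcal A_0+\lambda\Psi$. The coefficient of $\lambda^{-1}$ is a conjugate of the Higgs field of ${}^+\nabla^\lambda$, hence of type $(1,0)$, and that of $\lambda$ a conjugate of the Higgs field on $\overline\Sigma$, hence of type $(0,1)$; splitting $\mathcal A_0=\partial+\overline\partial$ into its $(1,0)$- and $(0,1)$-parts, $\mathcal A(\lambda)$ becomes $(\overline\partial+\lambda\Psi)+\lambda^{-1}(\lambda\partial+\Phi)$, which is exactly the family attached to the lift $\widehat s(\lambda)=(\overline\partial+\lambda\Psi,\,\lambda\partial+\Phi,\,\lambda)$. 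Since $(\partial,\Psi)$ represents the stable point $s(\infty)$ on $\overline\Sigma$, it is in particular semistable, so all requirements of Definition \ref{admissible} are met and $s$ is admissible.

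I expect the main obstacle to be the bookkeeping in the independence statement: running the loop-group factorization with holomorphic dependence on both $\lambda$ and $x$, ensuring that the positive and negative factors glue to global gauge transformations of the required regularity, and controlling the scalar/winding ambiguity through the determinant normalization above. Once these loop-group facts are in place, the algebraic double-coset invariance and the power-counting identification of $\mathcal A$ are routine.
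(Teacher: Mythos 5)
Your proposal is correct and takes essentially the same route as the paper: the independence claim is the double-coset invariance of the big cell under left multiplication by positive loops and right multiplication by negative loops (the paper's $\widehat g \,=\, g_1^{-1} g g_2 \,=\, (g_1^{-1}g^+)(g^-g_2)$ is exactly your $\widetilde g \,=\, (h^+)^{-1} g h^- \,=\, ((h^+)^{-1}g_x^+)(g_x^- h^-)$), and admissibility follows from the same Laurent power-counting applied to the common connection ${}^+\nabla^\lambda.g^+ \,=\, {}^-\nabla^\lambda.(g^-)^{-1}$. Your explicit control of the scalar ambiguity in the choice of $g$ via the winding number of $\det g$ addresses a point that the paper's proof leaves implicit, and is a worthwhile refinement rather than a different approach.
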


\begin{proof}
The proposition follows from just unraveling the definitions: Choose different lifts
$$^+\widehat{\nabla}^\lambda \,= \,^+\nabla^\lambda.g_1(\lambda)\ \ \text{ and }\ \
^-\widehat{\nabla}^\lambda \,=\, ^-\nabla^\lambda.g_2(\lambda)\, ,$$
with $g_1$ defined for all $\lambda\,\in\, \C$ and $g_2$ defined for all $\lambda
\,\in\, \C P^1\setminus\{0\}$. It follows that $\widehat{g} \,=\, g_1^{-1} g g_2 $ is
a gauge transformation satisfying the condition that $^+\widehat{\nabla}^\lambda.\widehat{g}(\lambda)
\,= \, ^-\widehat{\nabla}^\lambda$ and also lying in the big cell for all $x
\,\in\, \Sigma$. Indeed, we have $\widehat{g} \,=\, \widehat{g}^+\widehat{g}^-$, where
$\widehat{g}^+ \,=\, g_1^{-1} g^+$ and $\widehat{g}^- \,=\, g^-g_2$.
\end{proof}

Note that in the case of admissible sections, we can just take $g(\lambda) \,= \,
\mathrm{id}_E$ for the holomorphic family of gauge transformations. In particular, 
this gauge transformation is $\text{SL}(2,{\mathbb C})$-valued.

For a general irreducible section $s$ of $\mathcal M_{DH}$, we obtain a ${\mathbb 
Z}/ 2{\mathbb Z}$--valued invariant of $s$, which will be denoted by 
$\text{par}(s)$, as follows: Consider lifts $^+\nabla^\lambda$ and $^-\nabla^\lambda$ 
as above and choose a $\text{GL}(2,\mathbb C)$-valued $\lambda$-dependent gauge 
transformation $g$ such that we have $$^+\nabla^\lambda.g(\lambda) \,= \,^-\nabla^\lambda$$
for all $\lambda\,\in\, \C\setminus\{0\}$. The determinant map $\lambda\,\longmapsto
\,\det g(\lambda)$ is independent of $p\,\in\, \Sigma$ by irreducibility, and hence
it defines a holomorphic function from $\mathbb C\setminus\{0\}$ to itself.

If we replace $g$ by $fg$, where $f\,:\,\C\setminus\{0\}\,\longrightarrow\,
\C\setminus\{0\}$ is a 
holomorphic function, then the degree of the determinant map is multiplied by an 
element of $2\mathbb Z$. So the parity of the degree of determinant maps for $f$ and
$fg$ coincide. The parity of the degree of the determinant map is 
moreover independent of the choice of lifts $^\pm\nabla^\lambda$. (It is well-defined by
the above observations.)

\begin{definition} 
Let $s$ be an irreducible section of $\mathcal M_{DH}$ with choice of lifts
$^\pm\nabla^\lambda$. Then the {\it parity} of $s$, denoted by $\mathrm{par}(s)\,\in\,
{\mathbb Z}/2{\mathbb Z}$ is defined to be the parity of the degree of the
determinant map $\lambda\,\longmapsto\, \det g(\lambda)$, where $g$ is an arbitrary 
family of gauge transformations satisfying $^+\nabla^\lambda.g(\lambda) \,
= \,^-\nabla^\lambda$. 
\end{definition}

It is clear from the above discussion that an admissible section must have zero 
parity, as we may choose $g$ to be $\mathrm{SL}(2,\C)$-valued.

\begin{definition}\label{vsw}
A holomorphic vector bundle $(E,\,\overline\partial)$ on $\Sigma$ is called {\it very stable} if it admits no
nonzero nilpotent Higgs field.
A stable vector bundle which is not very stable is called {\it wobbly} \cite{DP}.

A {\it wobbly Higgs bundle} is a Higgs bundle $(E,\,\overline\partial,\, \Phi)$ such that the holomorphic bundle $(E,\,\overline\partial)$ is stable and the Higgs field $\Phi$ nilpotent and nonzero.
\end{definition}

The notion of very stable bundle was introduced
by Laumon in \cite{Lau}. It is known that a very stable vector bundle is stable \cite{Lau}.

We first show the existence of wobbly Higgs bundles. In other words, we show the existence of
stable Higgs pairs $(E,\,\overline\partial,\,\Phi)$, 
where $(E,\,\overline\partial)$ is stable of degree zero, and $\Phi$ is nilpotent and nonzero. This is well known; however, since wobbly
Higgs bundles play an important role in this paper (see Proposition \ref{Prop:OddPar}, Theorem \ref{countertheorem}),
we include a proof to be somewhat self-contained. It may be mentioned that it is much more difficult to
show that there are very stable vector bundles \cite{Lau}, despite very stability being a Zariski
open condition.

\begin{proposition}\label{StableNilpotent}
Let $\Sigma$ be a compact Riemann surface of genus $g\,\geq\, 2$. Then there exists a wobbly
vector bundle $(E,\,\overline\partial)$ on $\Sigma$ of rank $2$ and trivial determinant. (In other words,
$(E,\,\overline\partial)$ is stable and admits a non-trivial Higgs field $\Phi\,\in\, H^0(\Sigma,\, K\otimes \mathfrak{sl}(E))$
which is nilpotent.)
\end{proposition}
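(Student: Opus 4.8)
The plan is to realize $(E,\,\overline\partial)$ as a suitable extension of line bundles. Fix a line bundle $L$ on $\Sigma$ with $\deg L\,=\,-1$, chosen so that $K\otimes L^2$ admits a nonzero holomorphic section; such an $L$ exists because $\deg(K\otimes L^2)\,=\,2g-4\,\geq\,0$, and one may, for instance, take $L$ to be a square root of $\mathcal{O}(D)\otimes K^{-1}$ for an effective divisor $D$ of degree $2g-4$, square roots existing since this bundle has even degree $-2$. I would then consider short exact sequences
\[
0\,\longrightarrow\, L\,\longrightarrow\, E\,\longrightarrow\, L^{-1}\,\longrightarrow\, 0\, ,
\]
whose isomorphism classes are parametrized by $\operatorname{Ext}^1(L^{-1},\,L)\,=\,H^1(\Sigma,\,L^2)$. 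Since $\deg L^2\,=\,-2\,<\,0$ we have $h^0(L^2)\,=\,0$, so Riemann--Roch gives $\dim H^1(\Sigma,\,L^2)\,=\,g+1\,\geq\,3$. Every such $E$ satisfies $\det E\,\cong\, L\otimes L^{-1}\,\cong\,\mathcal{O}_\Sigma$, giving the required trivial determinant.

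The nilpotent Higgs field is then essentially free of charge. Given a nonzero $\psi\,\in\, H^0(\Sigma,\,K\otimes L^2)\,=\,\operatorname{Hom}(L^{-1},\,K\otimes L)$, I define $\Phi$ as the composite
\[
E\,\twoheadrightarrow\, L^{-1}\,\xrightarrow{\ \psi\ }\, K\otimes L\,\hookrightarrow\, K\otimes E\, ,
\]
using the quotient map and the inclusion coming from the extension; this is well defined for every extension class. With respect to the filtration $L\,\subset\, E$, the field $\Phi$ is strictly lower triangular, hence trace free, so $\Phi\,\in\, H^0(\Sigma,\,K\otimes\sln(E))$; it is nonzero because $\psi\,\neq\,0$, and $\Phi^2\,=\,0$ since $\operatorname{im}\Phi\,\subseteq\, K\otimes L$ while $\Phi|_L\,=\,0$. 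Thus $\Phi$ is nilpotent.

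It remains to arrange that $(E,\,\overline\partial)$ is stable, and this is the main obstacle. As $\deg E\,=\,0$, I must show that a generic extension admits no sub-line-bundle of non-negative degree. Let $M\,\hookrightarrow\, E$ be a sub-line-bundle with $\deg M\,\geq\,0$, and compose with $E\,\to\, L^{-1}$. If this composite vanishes then $M\,\subseteq\, L$, forcing $\deg M\,\leq\,-1$, a contradiction; otherwise the nonzero map $M\,\to\, L^{-1}$ gives $\deg M\,\leq\,1$, so $\deg M\,\in\,\{0,\,1\}$. If $\deg M\,=\,1$ the map $M\,\to\, L^{-1}$ is an isomorphism, whence the sequence splits, which is excluded once the extension class $e$ is nonzero. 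If $\deg M\,=\,0$ then $M\,\cong\, L^{-1}(-p)$ for the point $p$ at which $M\,\to\, L^{-1}$ vanishes, and $M$ lifts to a subbundle of $E$ exactly when the image of $e$ under $H^1(L^2)\,\to\, H^1(L^2(p))$ vanishes. From the cohomology sequence of $0\,\to\, L^2\,\to\, L^2(p)\,\to\,\C_p\,\to\,0$ this kernel is at most one-dimensional for each fixed $p$, so as $p$ ranges over $\Sigma$ the locus of extension classes admitting a destabilizing degree-$0$ subbundle sweeps out a subset of $H^1(L^2)$ of dimension at most $2$. Since $\dim H^1(L^2)\,=\,g+1\,>\,2$ for $g\,\geq\,2$, a generic (in particular nonzero) class $e$ avoids this locus, and the corresponding $E$ is stable. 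Combining the three steps yields a wobbly vector bundle of rank $2$ with trivial determinant, as claimed.
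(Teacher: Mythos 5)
Your proof is correct, and it builds the same kind of object as the paper --- a rank-two extension of a degree-one line bundle by its inverse, with trivial determinant, the nilpotent Higgs field being the composite of the quotient projection, a section of $K$ twisted by the square of the sub-line bundle, and the inclusion of the sub-line bundle --- but the stability step, which is the real content, is handled by a genuinely different argument. The paper settles $g=2$ by citing \cite{NR} and the methods of \cite{He}, and for $g\geq 3$ it takes the sub-bundle to be $\mathcal{O}(-p)$ and \emph{explicitly constructs} a good extension class: via Serre duality, the existence of a destabilizing degree-zero subbundle becomes the condition that the Dolbeault representative $\gamma$ annihilate one of the subspaces $s_q\cdot H^0(\Sigma,\,K\otimes\mathcal{O}(2p-q))$, and the paper builds by hand a hyperplane $U=\mathrm{span}\{\omega_1s_p^2,\dots,\omega_{g-1}s_p^2,\eta\}$ containing none of them; this is precisely where $g\geq 3$ enters (one needs $g-1\geq 2$ abelian differentials with no common zero). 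You instead argue by genericity: the potential destabilizers are exactly the line bundles $L^{-1}(-p)$, the obstruction to lifting one into $E$ is the image of the extension class under $H^1(\Sigma,\,L^2)\to H^1(\Sigma,\,L^2(p))$, the kernel of this map is a line, and the union of these lines over $p\in\Sigma$ has dimension at most $2<g+1=\dim H^1(\Sigma,\,L^2)$. Combined with your choice of $L$ as a square root of $\mathcal{O}(D)\otimes K^{-1}$, which guarantees $h^0(\Sigma,\,K\otimes L^2)>0$ in every genus (the paper's count $h^0(K\otimes L^{-2})>g-3$ again needs $g\geq 3$), this makes your argument uniform over all $g\geq 2$, covering genus two directly rather than by citation; the trade-off is that the paper exhibits an explicit wobbly bundle, whereas you obtain only a generic one. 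One point you should tighten: the assertion that the bad classes ``sweep out a subset of dimension at most $2$'' deserves justification --- for instance, note that $h^0(\Sigma,\,L^2(p))=0$ forces each kernel to be exactly one-dimensional, so $p\longmapsto \ker\bigl(H^1(\Sigma,\,L^2)\to H^1(\Sigma,\,L^2(p))\bigr)$ defines a holomorphic map from $\Sigma$ to $\mathbb{P}\bigl(H^1(\Sigma,\,L^2)\bigr)$, and the bad locus is the affine cone over its image, a proper closed analytic subset --- but this is a routine repair, not a gap.
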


\begin{proof}
The hyperelliptic case $g\,=\,2$ follows from the results in \cite{NR} or can be dealt 
with using the methods of \S~5 of \cite{He}.

Assume that $g\,\geq\, 3$ and pick a point $p\,\in \,\Sigma$. Let $L \,=\, L(p)$ be the
line bundle associated with the divisor $p$; so $L$ has a distinguished 
section $s_p\,\in\, H^0(\Sigma,\, L)$ which has a simple zero exactly at $p$. We construct the bundle $V$ as a non-trivial extension of $L$ by $L^{-1}$: As 
a smooth bundle we shall take $E\,=\, L^{-1} \oplus L$ with holomorphic structure $\overline\partial$ of the form
\[
\overline{\partial} \,=\, \begin{pmatrix} \overline{\partial}_{L^{-1}} & \gamma \\ 0 &
\overline{\partial}_L\end{pmatrix},
\]
with $\gamma\in \Gamma(\overline{K}\otimes L^{-2})$ such that $0\,\neq\, \gamma \,\in\, H^1(\Sigma,\,L^{-2})$ to be 
chosen later.

By Riemann-Roch, and using $h^0(L^2)\,>\,0$, we have
\[
h^0(K\otimes L^{-2}) \,=\, 2g-2 - 2 + 1-g + h^0(L^2) \,>\, g-3 \,\geq\, 0\, ,
\]
consequently, there exists a non-trivial $\alpha\,\in\, H^0(\Sigma,\,K\otimes L^{-2})$, and we define 
\[
\Phi \,=\, \begin{pmatrix}0 & \alpha\\ 0 & 0\end{pmatrix},
\]
which is a nilpotent holomorphic section of $K\otimes \mathrm{End}(E)$ for any choice of $\gamma$. We may interpret $\alpha$ as a
holomorphic section of $K$ vanishing to order at least two at $p$. 

Now let $V\subset E$ be a holomorphic rank $1$ subbundle with $\mathrm{degree}(V)
\,\geq\, 0$. With respect to the smooth splitting $E\,=\,L^{-1}\oplus L$ we may write the
inclusion $\iota\,:\, V\,\hookrightarrow\, E$ as 
\[
\iota \,=\, \begin{pmatrix} a\\ b \end{pmatrix},
\] 
with $a\,\in\, \Gamma(V^{-1}\otimes L^{-1}), b\,\in\, \Gamma(V^{-1}\otimes L)$ satisfying 
\[
\overline{\partial}_{V^{-1}\otimes L^{-1}} a + \gamma b \,=\, 0,\quad \overline{\partial}_{V^{-1}\otimes L} b \,=\, 0.
\]
By the second equation, we get a holomorphic section $b\,\in\, H^0(\Sigma,\,V^{-1}\otimes L)$. Since
$b\,\equiv\, 0$ implies $V\,\cong\, L^{-1}$, which has a negative degree, the section
$b$ must be non-trivial. It follows that $$0\,\leq\, \mathrm{degree}(V^{-1}L) \,=\,
\mathrm{degree}(L)-\mathrm{degree}(V) \,=\, 1- \mathrm{degree}(V)\, ,$$
hence, $0\,\leq \,\mathrm{degree}(V) \,\leq\, 1$. If $\mathrm{degree}(V) \,=\, 1$,
then $\mathrm{degree}(V^{-1}\otimes L)\,=\,0$ and $b$ would be an isomorphism, which in turn
would give $V\,\cong \,L$ and so $E\,=\, L^{-1}\oplus L$ holomorphically, violating
the condition $0\,\neq\, \gamma \,\in\, H^1(\Sigma,\,L^{-2})$. Thus, it follows that
$\mathrm{degree}(V) \,=\,0$, consequently, $\mathrm{degree}(V^{-1}\otimes L) \,=\,1$, which implies that
there exists $q\,\in\, \Sigma$ such that $b(q) \,=\,0$. In other words, 
\[
V\,=\, L(p-q)
\] 
holds.
We have $h^0(L(q)) \,=\, h^0(V^{-1}\otimes L) \,=\, 1$ so that $b\,=\,s_q$ is the distinguished
holomorphic section.
By Serre duality, the equation $\overline{\partial}_{V^{-1}L^{-1}} a + \gamma s_q
\,=\, 0$ has a solution if and only if $\gamma\otimes s_q$ annihilates the co-kernel of
$\overline{\partial}_{V^{-1}L^{-1}}$, meaning
\[
\int_\Sigma \langle \gamma\otimes s_q,\, \omega \rangle \,=\, 0, \qquad \forall\ 
\omega \,\in\, H^0(\Sigma,\,K\otimes V\otimes L) \,=\, H^0(\Sigma,\,K\otimes L(2p-q))\, ,
\]
where $\langle\cdot,\,\cdot\rangle$ denotes the pairing between $L^{-2}$ and $L^2$. 

Now since $g\,\geq\, 3$, we may choose a basis $\omega_1,\,\cdots,\, \omega_g\,\in\, H^0(\Sigma,\, K)$ such
that $\omega_1,\,\cdots,\,\omega_{g-1}$ have no common zero. Indeed, we have $\mathbb P(H^0(\Sigma, K)^*)
\,\cong \,\mathbb{CP}^{g-1}$ so that we can just take $\omega_1,\,\cdots,\,\omega_{g-1}$ to span a
hyperplane in the complement of the Kodaira embedding $\Sigma
\,\longrightarrow\, {\mathbb P}(H^0(\Sigma,K)^*)$ and then complete that to a basis. By Riemann-Roch
we have 
\[
h^0(K\otimes L^2) \,=\, 2g-2 + 2 + 1-g \,=\, g+1\, .
\]
Now the bundle $K\otimes L^2$ has degree $$\mathrm{degree}(K\otimes L^2) \,=\,
\mathrm{degree}(K) + 2 \,>\,\mathrm{degree}(K)+1\, ,$$ which implies that it is base-point free.
Take a section $\eta\,\in\, H^0(\Sigma,\,K\otimes L^2)$ such that $\eta(p) \,\neq\, 0$, and consider the hyperplane
\[
U \,=\, \mathrm{span}\{\omega_1s_p^2, \,\cdots,\,\omega_{g-1}s_p^2,\,\eta\}
\,\subset\, H^0(\Sigma,\,K\otimes L^2)\, .
\]
Now pick any $\gamma\,\in\, \Gamma(\overline{K}\otimes L^{-2})$ such that $$U\,=\,
{\rm kernel}\left(\alpha \mapsto \int_\Sigma \langle \gamma \wedge\alpha \rangle\right)\, .$$ Suppose there exists a
$q\,\in\, \Sigma$ such that $$U \,=\, s_q\otimes H^0(\Sigma,\,K\otimes V\otimes L)\,=\, H^0(\Sigma,\,K\otimes L(2p-q))\, .$$
If $p\,\neq \,q$,
then for each $\omega\,\in \,H^0(\Sigma,\,K\otimes L(2p-q))$ we may write $$s_q\omega
\,=\, (\lambda_1\omega_1 + \ldots +\lambda_{g-1}\omega_{g-1})s_p^2 + \mu\eta\, ,$$
with $\mu\,\neq\, 0$. Hence
such a section cannot vanish at $p$, a contradiction. On the other hand, if $p=q$ then $V$ is trivial and we have
$$H^0(\Sigma,\,K\otimes V\otimes L) \,= \,H^0(\Sigma,\,K\otimes L)\, .$$ Then $\alpha s_p$ vanishes to order at least $3$ at $p$, giving
again a contradiction to the choice of $\omega_1,\,\cdots,\, \omega_g$ and $\eta$.
\end{proof}

\begin{proposition}\label{Prop:OddPar}
For every Riemann surface $\Sigma$ of genus $g\,\geq\,2$,
there exists a holomorphic section $s$ of $\mathcal M_{DH}\,\longrightarrow\,\C P^1$
which has non-zero parity and therefore $s$ is not admissible.
\end{proposition}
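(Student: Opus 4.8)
The plan is to produce an irreducible section whose parity invariant $\mathrm{par}(s)\in\mathbb Z/2\mathbb Z$ is nonzero; since an admissible section can be lifted by an $\mathrm{SL}(2,\mathbb C)$-valued gauge transformation and therefore has even parity, such a section is automatically non-admissible. The source of the odd parity will be the wobbly Higgs bundle furnished by Proposition \ref{StableNilpotent}: a stable bundle $(E,\overline\partial)$ of trivial determinant carrying a nonzero nilpotent Higgs field $\Phi$. The guiding principle is that $\mathrm{par}(s)$ measures the degree modulo $2$ of the determinant of the $\mathrm{GL}(2,\mathbb C)$-gauge transformation $g(\lambda)$ relating a lift over $\mathbb C$ to a lift over $\mathbb{CP}^1\setminus\{0\}$ — equivalently, the degree modulo $2$ of the line bundle on $\mathbb{CP}^1$ obstructing a single global lift — and that the nilpotent direction of $\Phi$ contributes the odd degree of the corresponding line subbundle of $E$.

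First I would construct an irreducible section $s$ with $s(0)=(\overline\partial,\Phi)$. By Lemma \ref{lift-section} it suffices to exhibit a holomorphic family of flat connections ${}^+\nabla^\lambda=\lambda^{-1}\Phi+\nabla_0+O(\lambda)$ on $\mathbb C$ whose leading coefficient is the nilpotent $\Phi$, which is irreducible for all $\lambda\neq 0,\infty$, and which extends across $\lambda=\infty$ to a stable Higgs bundle on $\overline\Sigma$. Since $(E,\overline\partial)$ is stable of degree zero it admits a trace-free holomorphic connection, so families of the required shape through the wobbly bundle exist; arranging the subleading data generically keeps ${}^+\nabla^\lambda$ irreducible away from $0,\infty$ and renders both ends stable, producing a bona fide irreducible section. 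The point is that, unlike the sections coming from solutions of the self-duality equations, the section obtained this way will not admit a single global lift of the admissible form of Definition \ref{admissible}, and it is exactly this failure that the parity detects.

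The heart of the argument — and the step I expect to be hardest — is the computation $\mathrm{par}(s)=1$. The nilpotent Higgs field singles out the line subbundle $\mathrm{im}\,\Phi\subset E$ (equivalently $\ker\Phi$), which for the bundle produced in Proposition \ref{StableNilpotent} is $L^{\mp1}$ with $L=L(p)$ of degree $1$; thus this line subbundle has odd degree. I would analyze the loops $g_x\in L\,\mathrm{GL}(2,\mathbb C)$, $g_x(\lambda)=g(x,\lambda)$, through their Birkhoff type: as $\lambda\to 0$ the term $\lambda^{-1}\Phi$ dominates along $\mathrm{im}\,\Phi$, so the factorization of $g_x$ cannot lie in the big cell, and the twisting forced along this distinguished line subbundle makes the winding number of $\lambda\mapsto\det g(\lambda)$ congruent modulo $2$ to $\deg(\mathrm{im}\,\Phi)\equiv 1$. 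Making this congruence precise — relating the global winding of $\det g$ to the degree of the Higgs-field line subbundle, and checking independence of the choices permitted in the definition of the parity — is the technical crux; the deliberate use of a degree-one line bundle in Proposition \ref{StableNilpotent} is what guarantees the outcome is odd rather than merely nonzero.

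Finally, once $\mathrm{par}(s)=1$ is established, the remark preceding the statement applies verbatim: an admissible section has zero parity, so $s$ cannot be admissible. As the only hypothesis used is $g\geq 2$, which is precisely what Proposition \ref{StableNilpotent} requires, this yields the assertion for every such $\Sigma$.
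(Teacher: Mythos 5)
There is a genuine gap, and it is fatal to the core of your argument: the congruence you propose to prove, $\mathrm{par}(s)\,\equiv\,\deg(\ker\Phi) \bmod 2$, is false. The parity is an invariant of the \emph{section}, not of the Higgs bundle sitting at $\lambda=0$, and the very same wobbly Higgs bundle contradicts your claim. Indeed, the wobbly Higgs bundle $(E,\overline\partial,\Phi)$ of Proposition \ref{StableNilpotent} is a \emph{stable} Higgs bundle (the underlying bundle is stable), so by Hitchin's theorem it carries a harmonic metric, and the associated family $\nabla_h+\lambda^{-1}\Phi+\lambda\Phi^{*_h}$ is a preferred, admissible section with residue exactly $\Phi$ at $\lambda=0$; this section has parity $0$, while $\deg(\ker\Phi)=-1$ is odd. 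So no Birkhoff-factorization argument can establish the congruence you call the technical crux, and the ``guiding principle'' that the nilpotent direction forces odd winding of $\det g$ cannot be made precise. Your construction step is also unsubstantiated: requiring ${}^+\nabla^\lambda=\lambda^{-1}\Phi+\nabla_0+O(\lambda)$ to be \emph{flat for every} $\lambda$ is a strong PDE constraint (a self-duality-type system on the coefficients), so ``arranging the subleading data generically'' is not an available move; and to the extent such a family exists with finite tail and semistable ends, it would be admissible and hence of parity $0$, the opposite of what you want.

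The paper's proof works differently, and the difference is exactly the missing idea. Take $\nabla$ an irreducible \emph{unitary} flat $\SL(2,\C)$-connection whose underlying holomorphic structure is the wobbly bundle (Narasimhan--Seshadri applied to Proposition \ref{StableNilpotent}), let $\Phi$ be the nilpotent Higgs field, and consider the family with a \emph{double} pole,
\[
\nabla^\lambda \,=\, \nabla + \lambda^{-2}\Phi\, ,
\]
which is automatically flat for all $\lambda\in\C^*$: $F^\nabla=0$, $\overline\partial^\nabla\Phi=0$, and all remaining terms are $(2,0)$-forms, which vanish on a Riemann surface. Irreducibility for $\lambda\neq0,\infty$ follows from stability of $(E,\overline\partial^\nabla)$, extension to $\lambda=\infty$ from unitarity of $\nabla$ (so $\partial^\nabla$ is stable on $\overline\Sigma$), and extension to $\lambda=0$ by applying the gauge transformation $\Lambda(\lambda)=\mathrm{diag}(1,\lambda)$ in the unitary splitting $E=L\oplus L^\perp$ with $L=\ker\Phi$, which converts the double pole into a simple pole with a stable Higgs bundle as limit. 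The two lifts of the section then differ precisely by $\Lambda$, whose determinant $\lambda$ has degree one, so $\mathrm{par}(s)=1$ --- \emph{independently} of $\deg L$. The odd parity is created by the $\lambda^{-2}$ pole and the resulting weight-$(0,1)$ gauge correction, not by the degree of the kernel line bundle; this is the mechanism your proposal lacks.
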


\begin{proof}
Consider an irreducible, unitary flat ${\rm SL}(2,\C)$-connection $\nabla$ on $\Sigma$ whose underlying
holomorphic structure is wobbly. Let $\Phi$ be a nonzero nilpotent Higgs
field on the holomorphic vector bundle given by $\nabla$.

We claim that the section $s$ defined by
\[\lambda\in\C^*\,\longmapsto\, [\overline{\partial}^\nabla,\,
\lambda\partial^\nabla+\lambda^{-1}\Phi,\, \lambda]\,\in\,\mathcal M_{DH}\]
extends holomorphically as a section to $\lambda\,=\,0,\, \infty$ which is not of parity $0$. This
claim will be proved below. The above section
$s$ corresponds to the family $\nabla^\lambda \,=\, \nabla + \lambda^{-2}\Phi$ of irreducible flat connections.

To prove the above claim, we first note that the fact that the holomorphic connection
defined by $\nabla$ is irreducible and unitary implies that $\partial^\nabla$ defines a stable holomorphic
bundle on the conjugate Riemann surface $\overline\Sigma$. As a consequence, the section $s$ extends to $\infty$.

In order to see that $s$ extends to $\lambda\,=\,0$, consider the kernel line
bundle $L$ of the nilpotent Higgs field $\Phi$, in other words,
\[L_p\,\subset\, {\rm kernel}(\Phi_p)\]
with equality for general $p\,\in\,\Sigma.$ With respect to the unitary splitting
\[V\,=\,L\oplus L^\perp\]
the Higgs field $\Phi$ then has the form 
\[
\Phi \,=\, \begin{pmatrix} 0 & \alpha \\ 0 & 0\end{pmatrix}
\]
for some $\alpha\,\in \,\Omega^{1,0}((L^\perp)^*\otimes L)$. We define a family of gauge transformations
\[\Lambda\,=\,\Lambda(\lambda)\,=\,\begin{pmatrix} 1 & 0 \\ 0 & \lambda \end{pmatrix}\, ,\]
and apply it to a lift $\nabla+\lambda^{-2}\Phi$ of $s$ to obtain
\[(\nabla+\lambda^{-2}\Phi).\Lambda\,=\,\widetilde\nabla+\lambda^{-1}\widetilde{\Phi}
+\lambda\Psi\]
for a diagonal (in general non-flat) connection $\widetilde\nabla$ and off-diagonal
$(1,0)$ and $(0,1)$-forms $\widetilde\Phi$ and $\Psi$.
It is straight-forward to check that $(\overline{\partial}^{\widetilde\nabla},\,\widetilde{\Phi})$
is a stable Higgs bundle. Thus, the section $s$ holomorphically extends to $\lambda\,=\,0$.
We have shown that the section $s$ extends to an irreducible section $s\,:\, \C P^1\, \longrightarrow\, \mathcal M_{DH}$.

As we have already two lifts of $s$ (one for $\lambda\,\neq\,0$ and the other for
$\lambda\,\neq\,\infty$) and these lifts differ by the degree $1$ map
$\Lambda$, the proof of the proposition is complete.
\end{proof}

It would be very interesting to know whether there are any geometrically meaningful 
sections of $\mathcal M_{DH}$ which do not have parity $0$.

\subsection{Real sections} \label{sec:real}

Let $\sigma\,:\, \mathcal M_{DH}\,\longrightarrow\, \mathcal{M}_{DH}$ be an anti-holomorphic 
involution, covering an anti-holomorphic involution $\widetilde\sigma\,:\, \C 
P^1\,\longrightarrow\,\C P^1$, i.e., $\pi\circ \sigma \,=\, \widetilde\sigma\circ \pi$, where 
$\pi\,:\, \mathcal M_{DH}\,\longrightarrow\, \C P^1$ is the projection as usual. The cases of 
interest to us in this paper are $\sigma \,=\, \tau$ and $\sigma \,=\, \rho \,=\, \tau\circ 
N$. The corresponding involutions on $\widetilde\tau,\,\widetilde\rho\, :\,\C 
P^1\,\longrightarrow\, \C P^1$ are then $\widetilde\tau(\lambda) \,=\, -\overline\lambda^{-1}$ 
and $\widetilde\rho(\lambda) \,=\, \overline\lambda^{-1}$ (see \eqref{N}, \eqref{et}, 
\eqref{defC}).

\begin{definition}\label{Def:sigmaReal}
A holomorphic section $s\,:\,\C P^1\,\longrightarrow\,\mathcal M_{DH}$ of the fibration $\pi\,:\, \mathcal M_{DH}\,\longrightarrow\, \C P^1$ is called {\it real with respect to $\sigma$}, or just {\it $\sigma$-real} for short, if 
$s(\lambda)\,=\, \sigma(s(\widetilde\sigma(\lambda)))$ for all $\lambda\,\in\, \mathbb CP^1$.
\end{definition}

\begin{example}\label{Ex:SD_HM_real}\mbox{}
\begin{enumerate}[(i)]
\item A section $s\,:\, \C P^1\,\longrightarrow\, \mathcal M_{DH}$ is $\tau$-real if $s(\lambda)) \,=\, \tau(s(-\overline\lambda^{-1}))$. If we have a lift $\nabla^\lambda$ on $\mathbb C\,\subset\,\mathbb CP^1$ of $s$, this means that for every $\lambda\,\in\,\mathbb C\setminus\{0\}$ there is a gauge
transformation $g(\lambda)$ such that the equation
\begin{equation}\label{realeqsectau}
\nabla^\lambda.g(\lambda)\,=\,\overline{\nabla^{-\overline{\lambda}^{-1}}}
\end{equation}
holds. The calculation \eqref{SD_real} at the end of section \ref{sec:Conj_vs_adj} shows that a solution to the self-duality equations gives rise to a $\tau$-real section.
\item Similarly, a section $s\,:\, \C P^1\,\longrightarrow\, \mathcal M_{DH}$ is $\rho$-real if $s(\lambda) \,=\, \rho(s(\overline\lambda^{-1}))$. In terms of a lift $\nabla^\lambda$ on $\mathbb C\,\subset\,\mathbb CP^1$ of $s$, this
 means that for every $\lambda\,\in\,\mathbb C\setminus\{0\}$ there is a gauge
transformation $g(\lambda)$ such that 
\begin{equation}\label{realeqsecrho}
\nabla^\lambda.g(\lambda)\,=\,\overline{\nabla^{\overline{\lambda}^{-1}}}.
\end{equation}
A computation analogous to the one in \eqref{SD_real} shows that the family of flat connections associated with a minimal surface $f\,:\,\Sigma\, \longrightarrow\,\mathrm{SU}(2)$ defines an admissible $\rho$-real section (see Example \ref{ex:basic}).
\end{enumerate}
\end{example}

\begin{remark}
Besides $\tau$ and $\rho$, we do not expect any other interesting real involutions of $\mathcal M_{DH}$ to exist in general. To see this, 
recall that the fibers of $\pi$ over $\lambda\,=\,0,\,\infty$ are special (and not biholomorphic to the fibers over $\lambda\neq0$), compare 
also with \cite{BGHL}. On the other hand, if $\Sigma$ itself admits a real involution $\mathfrak i$ , the following construction is possible: 
By pull-back with $\mathfrak i$ we obtain a (holomorphic) involution
\[\mathfrak i\, \colon\, \mathcal M_{DH}\,\longrightarrow\, \mathcal M_{DH}\] covering the map
$\lambda\,\longmapsto\,\tfrac{1}{\lambda}.$ As $\mathfrak i$ and $\rho$ commute, we obtain a real involution $\mathfrak i\circ\rho$ covering 
$\lambda\,\longmapsto\,\overline{\lambda}.$ As opposed to the cases of $\tau$ and $\rho$, $\mathfrak i\circ\rho$-real sections are not directly related to 
harmonic maps. Instead, they give rise to a generalized reflection symmetry of the family of $\lambda$-connections. In the special case when 
the section is $\mathfrak i\circ\rho$-real and $\tau$-real or $\rho$-real, the corresponding (equivariant) harmonic map admits a reflection 
symmetry along a totally geodesic subspace of $H^3$ respectively $S^3,$ for a visualization in the latter case see \cite[Figures 1,3 $\&$ 
4]{HeHeSch}.
\end{remark}

In the following, we will only work with irreducible $\sigma$-real sections $s$ of $\mathcal 
M_{DH}$ of parity $0$, where $\sigma \,\in\, \{\tau,\, \rho\}$. In this case, we can choose the family of gauge transformations $g(\lambda)$ in \eqref{realeqsectau} and \eqref{realeqsecrho} to depend holomorphically on $\lambda$ and may assume that it takes values in $\SL(2,\mathbb C)$. By irreducibility, the holomorphic family $g(\lambda)$ is then uniquely determined up to a sign.

\begin{lemma}\label{lemma-pos-quat}
Let $\sigma\,\in\,\{\tau,\,\rho\}$ and consider an irreducible $\sigma$-real section $s$ of parity $0$. Let $\nabla^\lambda$ be a lift of $s$ over $\C$ and let $g(\lambda)$, $\lambda\in\C\setminus\{0\}$, be a holomorphic family of $\SL(2,\mathbb C)$-valued gauge transformations such that 
\begin{equation}\label{realeqsecsigma}
\nabla^\lambda.g(\lambda)\,=\,\overline{\nabla^{\widetilde{\sigma}(\lambda)}}.
\end{equation}
Then,
\begin{equation}\label{posneg}
g(\lambda)\overline{g(\widetilde\sigma(\lambda))}\,=\,\pm{\rm Id}
\end{equation}
with the above sign being independent of the choice of the lift. 
\end{lemma}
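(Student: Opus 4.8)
The plan is to apply the reality relation \eqref{realeqsecsigma} twice and to recognize the product $g(\lambda)\overline{g(\widetilde\sigma(\lambda))}$ as a flat automorphism of the irreducible connection $\nabla^\lambda$, so that Schur's lemma pins it down to a scalar. Before the computation I would record two elementary facts. First, $\widetilde\sigma$ is an involution of $\C\setminus\{0\}$: both $\widetilde\tau(\lambda)=-\overline\lambda^{-1}$ and $\widetilde\rho(\lambda)=\overline\lambda^{-1}$ satisfy $\widetilde\sigma\circ\widetilde\sigma=\mathrm{id}$ and preserve $\C\setminus\{0\}$. Second, complex conjugation intertwines the gauge action, that is $\overline{\nabla.k}=\overline\nabla.\overline k$ for any gauge transformation $k$, since conjugating $k^{-1}Ak+k^{-1}dk$ gives $\overline k^{-1}\overline A\,\overline k+\overline k^{-1}d\overline k$.

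Next I would evaluate \eqref{realeqsecsigma} at the point $\widetilde\sigma(\lambda)$, obtaining $\nabla^{\widetilde\sigma(\lambda)}.g(\widetilde\sigma(\lambda))=\overline{\nabla^{\lambda}}$ (using $\widetilde\sigma^2=\mathrm{id}$). Taking complex conjugates and invoking the intertwining property yields $\overline{\nabla^{\widetilde\sigma(\lambda)}}.\overline{g(\widetilde\sigma(\lambda))}=\nabla^\lambda$. I would then substitute the original relation $\overline{\nabla^{\widetilde\sigma(\lambda)}}=\nabla^\lambda.g(\lambda)$ and compose the two gauge actions, which collapses everything to
\[\nabla^\lambda.\bigl(g(\lambda)\overline{g(\widetilde\sigma(\lambda))}\bigr)=\nabla^\lambda.\]
Thus $h(\lambda):=g(\lambda)\overline{g(\widetilde\sigma(\lambda))}$ is a $\nabla^\lambda$-parallel endomorphism of $E$. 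Because $s$ is irreducible, $\nabla^\lambda$ is irreducible for $\lambda\neq 0,\infty$, so Schur's lemma forces $h(\lambda)=c(\lambda)\,{\rm Id}$ for some scalar $c(\lambda)$; and since $g(\lambda)\in\SL(2,\C)$ while $\det\overline{g(\widetilde\sigma(\lambda))}=\overline{\det g(\widetilde\sigma(\lambda))}=1$, one has $\det h(\lambda)=c(\lambda)^2=1$, whence $c(\lambda)=\pm1$. This is the asserted identity \eqref{posneg}.

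It then remains to see that the sign is globally constant and independent of the choices. For constancy in $\lambda$ I would observe that $g$ is holomorphic and $\widetilde\sigma$ anti-holomorphic, so $g\circ\widetilde\sigma$ is anti-holomorphic and its conjugate is holomorphic; hence $h$ is holomorphic on the connected set $\C\setminus\{0\}$, and a continuous $\{\pm1\}$-valued function there is constant. For independence of the lift, a second lift is $\widehat\nabla^\lambda=\nabla^\lambda.k(\lambda)$, and one checks that $\widehat g(\lambda)=k(\lambda)^{-1}g(\lambda)\overline{k(\widetilde\sigma(\lambda))}$ solves \eqref{realeqsecsigma} for $\widehat\nabla^\lambda$; substituting and using $\widetilde\sigma^2=\mathrm{id}$ gives $\widehat h(\lambda)=k(\lambda)^{-1}h(\lambda)k(\lambda)=h(\lambda)$, the last equality because $h(\lambda)=\pm{\rm Id}$ is central. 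The residual ambiguity $g\mapsto -g$ (the only freedom left by irreducibility) also cancels, since it sends $\overline{g\circ\widetilde\sigma}\mapsto -\overline{g\circ\widetilde\sigma}$ and leaves $h$ unchanged. I expect the only delicate step to be the passage from ``$h$ is parallel'' to ``$h$ is a scalar'': this is precisely where irreducibility is used, and it is what guarantees that $c(\lambda)$ depends on $\lambda$ alone and not on the base point $x\in\Sigma$, making \eqref{posneg} a genuine identity of constant gauge transformations.
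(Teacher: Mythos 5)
Your proposal is correct and follows essentially the same route as the paper: evaluate the reality relation at $\widetilde\sigma(\lambda)$, conjugate, substitute back to exhibit $g(\lambda)\overline{g(\widetilde\sigma(\lambda))}$ as a parallel automorphism of the irreducible connection $\nabla^\lambda$ (hence $\pm{\rm Id}$ by irreducibility and the $\SL(2,\C)$ condition), and then check lift-independence by conjugating $g$ with the transition gauge transformation, which leaves the central element $\pm{\rm Id}$ unchanged. Your explicit treatment of constancy in $\lambda$ (holomorphy of $\lambda\mapsto g(\lambda)\overline{g(\widetilde\sigma(\lambda))}$ plus discreteness of $\{\pm{\rm Id}\}$) and of the residual sign ambiguity $g\mapsto -g$ merely spells out details the paper leaves implicit.
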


\begin{proof}
We have for all $\lambda\,\in\,\mathbb C\setminus\{0\}$,
\begin{equation}
\begin{split}
\nabla^\lambda&\,=\,\overline{\overline{\nabla^\lambda}}\,=\,\overline{\nabla^{\widetilde\sigma(\lambda)}. g(\widetilde\sigma(\lambda))}\\
&\,=\,\overline{\nabla^{\widetilde\sigma(\lambda)}}.\overline{ g(\widetilde\sigma(\lambda))}\,=\,\nabla^\lambda .g(\lambda) \overline{g(\widetilde\sigma(\lambda))}.
\end{split}
\end{equation}
As $g$ is $\SL(2,\mathbb C)$-valued, and $\nabla^\lambda$ is irreducible for all
$\lambda$, it follows that
\[g(\lambda) \overline{g(\widetilde{\sigma}(\lambda))}\,=\, \pm\text{Id}\, .\]
If $\widetilde{\nabla}^\lambda$ is another lift, then
there is a $\SL(2,\C)$-valued gauge transformation $h$ which depends holomorphically
on $\lambda$, extends to $\lambda\,=\,0$ and satisfies the equation
 \[\widetilde{\nabla}^\lambda\,=\,\nabla^\lambda.h(\lambda)\, .\]
 Thus, we have for $\lambda\in\C\setminus\{0\}$
 \[\widetilde{\nabla}^\lambda.h^{-1}(\lambda )g(\lambda) \overline{h(\widetilde\sigma(\lambda))}\,=\,\overline{\widetilde{\nabla}^{\widetilde\sigma(\lambda)}}\]
 and 
 \[{\widetilde g}(\lambda)\,=\,h^{-1}(\lambda )g(\lambda) \overline{h(\widetilde\sigma(\lambda))}\]
 clearly satisfies the equation
 \[{\widetilde g}(\lambda)\overline{{\widetilde g}(\widetilde\sigma(\lambda))}\,=\, g(\lambda)\overline{g(\widetilde\sigma(\lambda))}.\]
This completes the proof of the lemma.
\end{proof}

The above lemma justifies the following definition.

\begin{definition}\label{Def:RealPosNeg}
Let $\sigma\,\in\,\{\tau,\,\rho\}$ and consider an irreducible $\sigma$-real section $s$ of parity $0$. Let $\nabla^\lambda$ be a lift of $s$ over $\C$ and let $g(\lambda)$, $\lambda\,\in\,\C\setminus\{0\}$, be a holomorphic family of $\SL(2,\mathbb C)$-valued gauge transformations such that 
\eqref{realeqsecsigma} holds. Then $s$ is called {\it $\sigma$-positive} if $g(\lambda)\overline{g(\widetilde\sigma(\lambda))}\,=\,{\rm Id}$ and {\it $\sigma$-negative} if $g(\lambda)\overline{g(\widetilde\sigma(\lambda))}\,=\,-{\rm Id}$.
\end{definition}
In the next two sections we shall study $\tau$-real and $\rho$-real sections more closely and link them to various kinds of harmonic maps.

\section{Real structure covering $\lambda\longmapsto-\overline{\lambda}^{-1}$ and real sections}\label{sec:realm}

We consider sections that are real with respect to the anti-holomorphic involution
\[\tau\,\colon\, \mathcal M_{DH}\,\longrightarrow\, \mathcal M_{DH} \]
defined in \eqref{et}
covering the anti-holomorphic involution
\[\lambda\,\longmapsto\, -\overline{\lambda}^{-1}\] of $\mathbb CP^1$, see Definition \ref{Def:sigmaReal} and Example \ref{Ex:SD_HM_real}.

We have seen in Example \ref{Ex:SD_HM_real} that a solution to the self-duality equations gives rise to a $\tau$-real section. This is in fact a general feature of the twistor construction in hyper-K\"ahler geometry: Points in the hyper-K\"ahler manifold give sections that are real with respect to the natural anti-holomorphic involution on the twistor space. In the literature these are sometimes called {\it twistor lines}. Here we shall follow Simpson's terminology \cite{Si} and call them \emph{preferred}. The precise definition is as follows.

\begin{definition}
We call a $\tau$-real section of $\mathcal M_{DH}\,\longrightarrow\,\C P^1$
a {\it preferred} section if it is given by the family of integrable $\lambda$-connections
associated with some solution to Hitchin's self-duality equations.
\end{definition}

\begin{question}\label{q1}
Simpson raised the question whether every $\tau$-real section of the Deligne-Simpson twistor
space $\mathcal M_{DH}$ is in fact a preferred section, i.e., whether it comes from a
solution of the self-duality equations \cite[\S~4, p.~235, Question]{Si}.
\end{question}

\begin{proposition}\label{SDneg}
Let $s$ be an irreducible preferred section. Then $s$ is $\tau$-negative.
\end{proposition}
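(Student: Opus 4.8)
The plan is to exhibit explicitly, for a preferred section, the $\SL(2,\C)$-valued family $g(\lambda)$ intertwining a lift with its $\tau$-conjugate, and then to read off the sign in \eqref{posneg}. First I would recall that a preferred section arises from a solution $(\nabla,\,\Phi)$ of the self-duality equations \eqref{SDeq}, with associated lift over $\C$ given by the family of flat connections
\[
\nabla^\lambda \,=\, \nabla + \lambda^{-1}\Phi + \lambda\Phi^*\, ,
\]
as in Remark \ref{conne-lift} and Example \ref{Ex:SD_HM_real}.

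The central ingredient is the computation \eqref{SD_real} carried out at the end of Section \ref{sec:Conj_vs_adj}, which, under the identification $E\,\cong\,\overline{E}^*$ furnished by the symplectic form $\Omega$, reads $\overline{\nabla^\lambda}.\delta \,=\, \nabla^{-\overline{\lambda}^{-1}}$, where $\delta$ is the Gram matrix of $\Omega$. Taking complex conjugates of both sides, and using that $\delta$ has real entries (so $\overline{\delta}\,=\,\delta$) together with the involutivity $\overline{\overline{\nabla^\lambda}}\,=\,\nabla^\lambda$ already exploited in the proof of Lemma \ref{lemma-pos-quat}, I obtain
\[
\nabla^\lambda.\delta \,=\, \overline{\nabla^{-\overline{\lambda}^{-1}}}\,=\,\overline{\nabla^{\widetilde\tau(\lambda)}}\, .
\]
Comparing with the reality equation \eqref{realeqsectau} (equivalently \eqref{realeqsecsigma} with $\sigma\,=\,\tau$), this exhibits the constant $g(\lambda)\,=\,\delta$ as a valid intertwining gauge transformation; note that $\delta\,\in\,\SU(2)\,\subset\,\SL(2,\C)$, so it is $\SL(2,\C)$-valued and trivially holomorphic in $\lambda$, as required by Definition \ref{Def:RealPosNeg}.

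It then remains to substitute into \eqref{posneg}. Since $g$ is constant and real,
\[
g(\lambda)\,\overline{g(\widetilde\tau(\lambda))} \,=\, \delta\cdot\overline{\delta} \,=\, \delta^2 \,=\, -\mathrm{Id}\, ,
\]
using $\delta^2\,=\,-\mathrm{Id}$. By Lemma \ref{lemma-pos-quat} this sign does not depend on the chosen lift, so by Definition \ref{Def:RealPosNeg} the section $s$ is $\tau$-negative.

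I do not anticipate a genuine obstacle: the whole argument reduces to reading off $g\,=\,\delta$ from the already-established identity \eqref{SD_real} and squaring $\delta$. The only point worth checking is that $\delta$ is a legitimate representative for the class of intertwining gauge transformations, which is immediate since it is constant, unitary, and of determinant one; Lemma \ref{lemma-pos-quat} (resting on irreducibility) then guarantees that no other admissible choice of lift or of $g$ can alter the resulting sign. The sign being $-\mathrm{Id}$ rather than $+\mathrm{Id}$ is thus a direct reflection of the relation $\delta^2\,=\,-\mathrm{Id}$ governing the quaternionic structure in the rank two case.
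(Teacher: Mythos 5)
Your proof is correct and follows essentially the same route as the paper: both rest on the identity \eqref{SD_real}, the identification $g(\lambda)\,=\,\delta$, and the relation $\delta^2\,=\,-\mathrm{Id}$ to read off the sign in \eqref{posneg}. In fact you spell out a step the paper leaves implicit, namely conjugating \eqref{SD_real} and using $\overline{\delta}\,=\,\delta$ to bring it into the exact form \eqref{realeqsecsigma} required by Definition \ref{Def:RealPosNeg}.
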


\begin{proof}
Let $(\nabla,\,\Phi)$ be a $\SL(2,\C)$-solution of the self-duality equations with 
respect to the standard Hermitian metric $(-,-)$ on the trivial vector bundle $E\,=\, \Sigma\times{\mathbb
C}^2$. Note that any solution of the self-duality equations for some 
Hermitian metric can be gauged into this form; see Section \ref{sec:SDaM}.

In \eqref{SD_real} we have seen the identity
\[\overline{\nabla^\lambda}.\delta\,=\,\nabla^{-\overline{\lambda}^{-1}}.\]
As $\delta^2\,=\,-\text{Id}$, every solution to the self-duality equations yields a $\tau$-negative section of the Deligne-Hitchin moduli space. 
\end{proof}

In view of Proposition \ref{SDneg}, the 
following theorem shows that Question \ref{q1} has a
negative answer.

\begin{theorem}\label{countertheorem}
There exist irreducible, admissible, $\tau$-positive sections of $\mathcal M_{DH}.$
\end{theorem}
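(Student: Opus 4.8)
The plan is to produce an irreducible admissible section that is $\tau$-\emph{positive}: by Proposition~\ref{SDneg} every preferred section is $\tau$-negative, so any $\tau$-positive section cannot come from a solution of the self-duality equations, and this already gives the desired negative answer to Question~\ref{q1}. The conceptual point is that, by Definition~\ref{Def:RealPosNeg}, the sign is $+\mathrm{Id}$ precisely when the intertwining gauge in \eqref{realeqsectau} can be taken constant and trivial, which is the signature of the split real form $\SL(2,\R)$ rather than the compact form $\SU(2)$; in the preferred (compact) case the intertwiner is the constant $\delta$ with $\delta^2=-\mathrm{Id}$, see \eqref{SD_real}, which forces the negative sign.

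Concretely, I would start from a flat irreducible connection $\nabla$ whose connection form is real (i.e.\ $\sln(2,\R)$-valued), together with a nonzero nilpotent holomorphic field $\Phi$, and form the family
\[
\nabla^\lambda \,=\, \nabla + \lambda^{-1}\Phi - \lambda\,\overline{\Phi}\, .
\]
Since $\Phi$ is nilpotent of rank one, the bracket $[\Phi\wedge\overline\Phi]$ vanishes, so flatness of $\nabla^\lambda$ for every $\lambda$ reduces to $\overline\partial^\nabla\Phi=0$ together with flatness of $\nabla$, both of which hold by construction. Because $\nabla$ is real we have $\overline\nabla=\nabla$, and a direct substitution gives $\overline{\nabla^{-\overline\lambda^{-1}}}=\nabla+\lambda^{-1}\Phi-\lambda\overline\Phi=\nabla^\lambda$; thus \eqref{realeqsectau} holds with the constant intertwiner $g\equiv\mathrm{Id}$, so $g\,\overline{g}=\mathrm{Id}$ and the section is $\tau$-positive by Definition~\ref{Def:RealPosNeg}. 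The lift is already of the shape $(\overline\partial^\nabla-\lambda\overline\Phi,\,\lambda\partial^\nabla+\Phi,\,\lambda)$ required in Definition~\ref{admissible}, so admissibility follows once $(\partial^\nabla,\,-\overline\Phi)$ is seen to be semistable on $\overline\Sigma$, and the value at $\lambda=0$ is the Higgs bundle $(\overline\partial^\nabla,\Phi)$.

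It remains to arrange irreducibility and, above all, to secure the input data. For irreducibility I would verify that $s(0)=(\overline\partial^\nabla,\Phi)$ and $s(\infty)=(\partial^\nabla,-\overline\Phi)$ are stable Higgs bundles on $\Sigma$ and $\overline\Sigma$ — automatic as soon as the underlying holomorphic bundle $(E,\overline\partial^\nabla)$ is stable — and that $\nabla^\lambda$ admits no invariant line subbundle for $\lambda\neq0,\infty$, which holds away from at most finitely many $\lambda$ by irreducibility of $\nabla$ and must be promoted to all such $\lambda$. The hard part, I expect, is the \emph{existence of the input}: one needs a flat irreducible $\SL(2,\R)$-connection whose de Rham holomorphic bundle $(E,\overline\partial^\nabla)$ is \emph{wobbly}, so that it is stable yet carries the nonzero nilpotent field $\Phi$. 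Proposition~\ref{StableNilpotent} supplies wobbly bundles; the remaining obstacle is to realize such a bundle as the holomorphic bundle of a real flat irreducible connection. I would approach this by a dimension/genericity count, since the wobbly bundles form a divisor in the moduli space while the $\SL(2,\R)$-locus has full real dimension, so that a nonempty intersection is expected; this is the step I regard as the genuine difficulty, the rest being the bookkeeping sketched above.
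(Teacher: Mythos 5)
Your construction founders on the flatness claim, and this gap cannot be repaired within your ansatz. Writing $\Phi=\Phi_0\,dz$ locally, the curvature of your family is
\[
F^{\nabla^\lambda}\,=\,F^\nabla\,-\,[\Phi\wedge\overline{\Phi}]\,+\,\lambda^{-1}d^\nabla\Phi\,-\,\lambda\, d^\nabla\overline{\Phi}\, ,
\]
and $[\Phi\wedge\overline{\Phi}]\,=\,[\Phi_0,\overline{\Phi_0}]\,dz\wedge d\bar z$ does \emph{not} vanish merely because $\Phi$ is nilpotent: two nonzero nilpotent $2\times2$ matrices commute if and only if they have the same kernel, so $[\Phi\wedge\overline{\Phi}]$ vanishes at $p$ if and only if $\ker\Phi_p\subset\C^2$ is a \emph{real} line, i.e.\ $\overline{\ker\Phi_p}=\ker\Phi_p$. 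Worse, this can never hold identically in your setting: if $\overline{\nabla}=\nabla$ and the kernel line bundle $L$ of $\Phi$ satisfies $\overline{L}=L$, then $L$ is $\overline{\partial}^\nabla$-invariant (it is the kernel of a holomorphic bundle map) and also $\partial^\nabla$-invariant, since for a section $s$ of $L$ one has $\partial^\nabla s=\overline{\overline{\partial}^\nabla\bar{s}}$ with $\bar{s}$ again a section of $L$; hence $L$ would be $\nabla$-parallel, contradicting the irreducibility of $\nabla$. So for any irreducible flat $\SL(2,\R)$-connection and any nonzero nilpotent holomorphic $\Phi$, your family $\nabla+\lambda^{-1}\Phi-\lambda\overline{\Phi}$ has curvature $-[\Phi\wedge\overline{\Phi}]\neq0$ and is not a lift of any section of $\mathcal M_{DH}$. (Your reality computation, giving $g\equiv\mathrm{Id}$ and hence positivity via Definition \ref{Def:RealPosNeg}, is correct in itself --- but it is applied to a family that does not exist.)

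The missing idea is to give up flatness of $\nabla$ and absorb the bracket into the curvature. The paper starts from a solution $(\widetilde{\nabla},\widetilde{\Phi})$ of the \emph{signed} self-duality equations \eqref{harmonics2}, $F^{\widetilde{\nabla}}=[\widetilde{\Phi}\wedge\widetilde{\Phi}^*]$, with $\widetilde{\Phi}$ nilpotent and small; such solutions are obtained (via \cite{Roe}, or the deformation argument in the proof of Theorem \ref{twistordes}) as small perturbations of the Narasimhan--Seshadri connection of the wobbly bundle furnished by Proposition \ref{StableNilpotent} --- this, rather than a dimension count against the $\SL(2,\R)$-locus, is also how the paper resolves what you call the genuine difficulty of securing the input. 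Irreducibility of the resulting section follows by openness from the constant unitary section. Finally, $\tau$-positivity is not achieved through reality of the connection form but through the Gauss-map trick: one reparametrizes $\lambda\mapsto\lambda^2$ and gauges by the multivalued $h(\lambda)=\mathrm{diag}(\sqrt{\lambda},1/\sqrt{\lambda})$ relative to the splitting $L\oplus L^\perp$, producing a family $\nabla+\lambda^{-1}\Phi-\lambda\Phi^*$ that admits the extra symmetry \eqref{is2}; the intertwiner realizing $\tau$-reality is then $g\delta$ with $g\delta\,\overline{g\delta}=+\mathrm{Id}$, as in \eqref{gdelta}. Without some such mechanism --- a non-flat $\nabla$ solving \eqref{harmonics2} together with the square-root gauge --- your approach cannot be salvaged.
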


Before starting the proof, let us briefly describe the main ingredients.
The counterexamples are given by generalizations of non-conformal harmonic maps to the round 2-sphere, 
which are called harmonic sections in \cite{Hi}.
They correspond to
solutions of the signed self-duality equations
\begin{equation}\begin{split}\label{harmonics2}
\overline{\partial}^\nabla\Phi&=0\\
F^\nabla-[\Phi\wedge\Phi^*]&=0
\end{split}
\end{equation}
for a special unitary connection $\nabla$ and a Higgs field $\Phi.$ The additional condition we need to
impose is that there exists a gauge transformation $g$ such that
\begin{equation}\label{is2}
g^2\,=\,-\text{Id},\;\;\; \nabla.g\,=\,\nabla \;\;\; \text{ and } \,\,\;g^{-1}\Phi g
\,=\,-\Phi\, .
\end{equation} 
The above additional property means that the (equivariant) harmonic map is contained
in a 2-sphere; see \S~1 of \cite{Hi}. If the conditions in \eqref{is2} are not satisfied,
then we would obtain a harmonic map into the 3-sphere $\SU(2)$ with monodromy.

The existence of solutions of \eqref{harmonics2} and \eqref{is2} can be seen as follows. The Gauss map of a 
minimal surface in $S^3$ is a non-conformal harmonic map into $S^2.$ Minimal surfaces exist for every genus 
\cite{La}. However, the family of flat connections associated to such a minimal surface does \emph{not} define an 
irreducible section of $\mathcal M_{DH}$ as there exist two points $\lambda_1,\,\lambda_2\,\in\, S^1$ such that the 
corresponding flat connections $\nabla^{\lambda_1}$ and $\nabla^{\lambda_2}$ are trivial and in particular
these two flat connections are reducible.

To obtain an irreducible section of the above kind, we use the existence of wobbly 
Higgs bundles $(E,\,\overline\partial,\,\Phi)$, where $(E,\,\overline\partial)$ is stable of degree zero with trivial determinant, and $\Phi$ is nilpotent (see Proposition \ref{StableNilpotent}).

\begin{proof}[Proof of Theorem \ref{countertheorem}]
Using Proposition \ref{StableNilpotent} and the results of \cite{Roe} we obtain a solution $(\widetilde{\nabla},\,\widetilde{\Phi})$ of
\eqref{harmonics2} such that
\begin{itemize}
\item $\widetilde\nabla$ is irreducible with small curvature, and

\item the Higgs field $\widetilde\Phi$ is nilpotent and small.
\end{itemize}
(An independent proof of the above mentioned result of \cite{Roe} is given in
the proof of Theorem \ref{twistordes} below.)
We obtain the associated family 
 \[\lambda\longmapsto\widetilde{\nabla}^\lambda \,=\, \widetilde\nabla + \lambda^{-1}\widetilde{\Phi} -\lambda\widetilde{\Phi}^* \]
of flat connections (see Example \ref{ex:basic}).

We claim that the corresponding section $\widetilde s$ of $\mathcal M_{DH}$ is irreducible (for $\widetilde{\Phi}$ small enough).

In order to prove that $\widetilde s$ is irreducible, we first note that irreducibility is an open condition. As the constant section defined 
by the flat unitary connection corresponding to $\overline\partial^{\widetilde{\nabla}}$ (via the Narasimhan-Seshadri theorem)
 is irreducible, any section nearby is irreducible as well because the domain $\mathbb CP^1$ of the section is compact. As $\widetilde s$ is a 
small deformation of the constant section if $\widetilde{\Phi}$ is small (compare also with the first part of the proof of Theorem 
\ref{twistordes} below), the section $\widetilde s$ is in fact irreducible. This proves the claim.

There is a transformation on $\widetilde{\nabla}^\lambda$, corresponding to the Gauss map of the equivariant conformal harmonic map given by 
$(\widetilde{\nabla},\,\widetilde\Phi)$, which yields a solution of \eqref{harmonics2} and \eqref{is2} as follows: As $\widetilde\Phi$ is 
nilpotent, it has a holomorphic kernel bundle $L.$ Let
$$
\underline \C^2\,=\,L\oplus L^\perp
$$
be the orthogonal decomposition, so $L^\perp$ is the orthogonal complement of $L$.
Consider the new family of flat connections defined by
\begin{equation}\label{defnablam}
\nabla^\lambda=\widetilde{\nabla}^{\lambda^2}.h(\lambda),\end{equation}
where 
\[h(\lambda)=\begin{pmatrix} \sqrt{\lambda} &0\\0 &\frac{1}{\sqrt{\lambda}} \end{pmatrix}\]
with respect to the above decomposition
$\underline \C^2\,=\,L\oplus L^\perp.$ Note that $h$ is \emph{not} an element of the loop group, as it is not single-valued on $\C\setminus \{0\}$. Since the ambiguity in the definition of $h$
involves just a sign, the resulting family of connections is indeed well-defined. Also, $\nabla^\lambda$ as defined in \eqref{defnablam} is of the form
\[\nabla^\lambda=\nabla+\lambda^{-1}\Phi-\lambda\Phi^*,\]
 where $\nabla$ and $\Phi$ are defined implicitly by this equation. 
In fact, we can write down $\nabla$, $\Phi$ and $\Phi^*$ more explicitly.
With respect to the above orthogonal decomposition
$\underline \C^2\,=\,L\oplus L^\perp$, where $L$ is the kernel bundle of $\widetilde{\Phi}$,
we have an induced decomposition of the unitary connection
\[\widetilde{\nabla}=\begin{pmatrix}\nabla^L& \gamma \\ -\gamma^* & \nabla^{L^*}\end{pmatrix}.\]
Here, $\nabla^L$ and $\nabla^{L^*}$ are ${\rm U}(1)$ connections, while $\gamma\,\in\,\Gamma(\Sigma,\,\overline{K}\otimes L^2)$ and 
$-\gamma^*\,\in \,\Gamma(\Sigma,\,K\otimes L^{-2})$ are $C^\infty$ sections.
 Recall that $\widetilde{\Phi}$ has the form
\[\widetilde{\Phi}=\begin{pmatrix} 0 &\alpha\\0&0\end{pmatrix}\]
for a non-vanishing holomorphic section $\alpha\,\in\, H^0(\Sigma,\,K\otimes L^2).$ Then, the connection $\nabla$ is given by
\begin{equation}\label{nabLL*}
\nabla=\begin{pmatrix} \nabla^L&0\\0&\nabla^{L^*}\end{pmatrix}\, ,\end{equation}
while the Higgs field is 
\begin{equation}\label{higgsLL*}
\Phi=\begin{pmatrix} 0& \alpha\\ -\gamma^*&0\end{pmatrix}\end{equation}
and its adjoint has the expression
\[\Phi^*=\begin{pmatrix} 0& -\gamma\\ \alpha^*&0\end{pmatrix}.\]
 It is then not difficult to see that the family $\nabla^\lambda$ gives rise to a
solution of \eqref{harmonics2} and \eqref{is2} by taking 
\[g\,=\,\begin{pmatrix}\sqrt{-1} &0\\0 &-\sqrt{-1} \end{pmatrix}\]
with respect to $\underline \C^2=L\oplus L^\perp.$

We claim that this family yields an irreducible section $s$ (with $\nabla^\lambda$ as an admissible lift) which is $\tau$-positive.

We first prove the irreducibility of the section $s$.
Note that for $\lambda\,\neq\,0$, the irreducibility of a $\lambda$-connection $(\overline{\partial},\,D,\, \lambda)$ is equivalent to 
the irreducibility of the corresponding
flat connection $\overline{\partial}+\tfrac{1}{\lambda}D.$ Since $\widetilde s$ is an irreducible section, the connections
$\widetilde{\nabla}^\lambda$ are irreducible for all $\lambda\,\in\,\C^*.$ Therefore, also the connections $\nabla^\lambda$
are irreducible for all $\lambda\in\C^*.$ In order to prove that $s$ is an irreducible section it suffices to prove that
\[(\overline{\partial}^\nabla,\, \Phi)\]
is an irreducible (i.e., stable) Higgs bundle (note that the corresponding statement for $\lambda=\infty$ follows from the reality of $s$).
The stability of $(\overline{\partial}^\nabla,\, \Phi)$
can be easily deduced from \eqref{nabLL*} and \eqref{higgsLL*}: As $\alpha\,\neq\,0$, the line subbundle $L^*$ is not invariant under 
$\Phi$. Hence, every $\Phi$-invariant line subbundle $V$ must admit a non-trivial holomorphic line bundle homomorphism 
$$V\,\longrightarrow\, {\underline{\C}^2}/L^*\,=\, E/L^*\,=\,L,$$ and therefore we have $$\deg(V)\,\leq\,\deg(L)\,<\,0$$ as claimed.

In order to see that $s$ is $\tau$-positive, we use the above gauge transformation $g$ and the gauge transformation $\delta$ introduced
in Section \eqref{sec:Conj_vs_adj} and obtain:
\begin{equation}\label{gdelta}
\begin{split}
\overline{\nabla^{-\overline{\lambda}^{-1}}}&=\overline{\nabla}-\lambda \overline{\Phi}+\lambda^{-1}\overline{\Phi}^*\\
&=\nabla.g\delta+\lambda^{-1} (g\delta)^{-1}\Phi (g\delta)-\lambda (g\delta)^{-1}\Phi^*(g\delta)\\ &=\nabla^\lambda.g\delta.
\end{split}
\end{equation}
This shows that $s$ is $\tau$-real.
A short computation (after taking care about the fact that the matrix representations of $g$ and $\delta$ are with respect to the splittings $L\oplus L^\perp$ and $\C\oplus\C,$ respectively) yields
\[g\delta\overline{g}\overline{\delta}\,=\,\text{Id}\, ,\]
so $s$ is $\tau$-positive as claimed.
\end{proof}

\begin{remark}\mbox{}
\begin{enumerate}[(i)]
\item From our construction, we were unable to determine the normal bundle of the resulting admissible $\tau$-positive section. It is a general feature of the twistor construction in hyper-K\"ahler geometry that the normal bundle of a preferred section is isomorphic to
${\mathcal O}_{\C P^1}(1)^{\oplus d}$, where $d \,=\, \dim_\C(\mathcal M_{SD})$. It follows from the deformation argument in the proof of Theorem \ref{twistordes} that this also holds for the normal bundle of the $\rho$-real section given by $\widetilde\nabla^\lambda$. In order to calculate the normal bundle of the $\tau$-positive section $\nabla^\lambda$ constructed in the proof above, it would be necessary to understand how the normal bundle is affected by conjugation by the family $h(\lambda)$.

\item In a recent preprint \cite{HeHe}, L.~Heller and the second author construct 
$\tau$-negative sections of the Deligne-Hitchin moduli space which are not admissible. 
\end{enumerate}
\end{remark}

\begin{theorem}\label{mainth}
Let $s$ be an irreducible, admissible, $\tau$-real section. Then $s$ is
\begin{itemize}
\item either induced by a solution of the self-duality equations and hence a preferred section,

\item or it is $\tau$-positive.
\end{itemize}
\end{theorem}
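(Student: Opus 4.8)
The plan is to reduce the whole dichotomy to a single structural fact: for an irreducible admissible $\tau$-real section, the gauge transformation realizing the reality condition can be taken constant in $\lambda$. Once this is established, matching powers of $\lambda$ exhibits the self-duality structure precisely in the $\tau$-negative case, while the $\tau$-positive case is the remaining alternative.

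First I would fix an admissible lift over $\C$ of the form $\nabla^\lambda=D_0+\lambda^{-1}\Phi+\lambda\Psi$ as in Definition \ref{admissible}, where $D_0=\partial+\overline{\partial}$, the field $\Phi$ is of type $(1,0)$ and $\Psi$ of type $(0,1)$. A direct substitution of $\mu=-\overline{\lambda}^{-1}$ into $\nabla^\mu=D_0+\mu^{-1}\Phi+\mu\Psi$ gives $\nabla^{-\overline{\lambda}^{-1}}=D_0-\overline{\lambda}\Phi-\overline{\lambda}^{-1}\Psi$, so that
\[
\overline{\nabla^{-\overline{\lambda}^{-1}}}\,=\,\overline{D_0}-\lambda^{-1}\overline{\Psi}-\lambda\overline{\Phi}\, .
\]
The antiholomorphic dependence on $\overline{\lambda}$ is cancelled by the outer conjugation, so this family is holomorphic in $\lambda$; moreover $\overline{\Psi}$ is of type $(1,0)$ and $\overline{\Phi}$ of type $(0,1)$, so the right-hand side is again in admissible form. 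Since $s$ is $\tau$-real, this second family is also a lift of $s$, and by irreducibility there is a holomorphic family $g(\lambda)\in\SL(2,\C)$ on $\C^{*}$, unique up to sign, with $\nabla^\lambda.g(\lambda)=\overline{\nabla^{-\overline{\lambda}^{-1}}}$.

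The key step, and what I expect to be the main obstacle, is to prove that $g$ is constant. Both $\nabla^\lambda$ and its $\tau$-conjugate extend to holomorphic families of $\lambda$-connections across $\lambda=0$ and $\lambda=\infty$, with limits the stable Higgs bundles $s(0)$ on $\Sigma$ and $s(\infty)$ on $\overline{\Sigma}$; here the reality relation $s(0)=\tau(s(\infty))$ guarantees that the two lifts limit to the \emph{same} point of each special fibre. Hence $g(\lambda)$, being the intertwiner of two holomorphic families of $\lambda$-connections with stable (and therefore simple) limits, extends holomorphically over $\lambda=0$ and $\lambda=\infty$ with invertible limiting values $g(0),g(\infty)$, each an isomorphism of the corresponding stable Higgs bundle. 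Thus $g$ is a holomorphic map $\C P^1\to\SL(2,\C)$; since $\SL(2,\C)$ is affine and $\C P^1$ is compact and connected, $g\equiv g_0$ is constant. The delicate point is exactly the holomorphic invertible extension at the special fibres: one must rule out poles or degeneration of $g$ as $\lambda\to 0,\infty$, and it is the stability of $s(0)$ and $s(\infty)$ that forces the limiting intertwiners to be honest isomorphisms rather than merely nonzero homomorphisms.

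With $g=g_0$ constant, comparing the coefficients of $\lambda^{-1}$, $\lambda^{0}$ and $\lambda^{1}$ in $g_0^{-1}\nabla^\lambda g_0=\overline{\nabla^{-\overline{\lambda}^{-1}}}$ yields
\[
g_0^{-1}\Phi g_0=-\overline{\Psi},\qquad g_0^{-1}D_0\,g_0=\overline{D_0},\qquad g_0^{-1}\Psi g_0=-\overline{\Phi}\, .
\]
By Lemma \ref{lemma-pos-quat} we have $g_0\overline{g_0}=\pm\,\mathrm{Id}$, and this sign is precisely the one distinguishing $\tau$-positive from $\tau$-negative in Definition \ref{Def:RealPosNeg}. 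If $g_0\overline{g_0}=+\mathrm{Id}$ the section is $\tau$-positive, which is the second alternative. If $g_0\overline{g_0}=-\mathrm{Id}$, then $v\mapsto g_0\overline{v}$ is a quaternionic structure on $\C^{2}$, so after a constant change of frame adapted to it we may assume $g_0=\delta$. Invoking the identity $\delta^{-1}\overline{A}\delta=-A^{*}$ from Section \ref{sec:Conj_vs_adj}, the three relations become $\Psi=\Phi^{*}$ together with $D_0=\overline{D_0}.\delta$, the latter being exactly the condition that $D_0$ is a unitary connection (as in the discussion preceding \eqref{SD_real}). Hence in this frame $\nabla^\lambda=D_0+\lambda^{-1}\Phi+\lambda\Phi^{*}$ with $D_0$ unitary, and flatness of $\nabla^\lambda$ for all $\lambda$ is precisely Hitchin's self-duality system \eqref{SDeq}. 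Therefore the $\tau$-negative admissible section is preferred, which establishes the dichotomy; this is also consistent with Proposition \ref{SDneg}, since $\delta^{2}=-\mathrm{Id}$ forces the preferred case to be $\tau$-negative.
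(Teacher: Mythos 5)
Your outline follows the same overall strategy as the paper's proof: produce the reality gauge $g(\lambda)$, show it is constant in $\lambda$, and then split according to the sign in Lemma \ref{lemma-pos-quat}. But the step you yourself flag as ``the main obstacle'' is precisely the content of the paper's proof, and you do not supply it. The paper proves constancy as follows: by irreducibility, the intertwiners between the two families of $\lambda$-connections form a holomorphic line bundle $\mathcal L$ over $\C^*$, which extends across $\lambda=0$ and $\lambda=\infty$ because the limiting objects there are stable Higgs pairs (identified with each other via the $\tau$-reality of $s$); one then takes a meromorphic section of $\mathcal L$ all of whose zeros and poles sit at $\lambda=\infty$. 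Its determinant is a rational function of $\lambda$ with neither zeros nor poles in $\C$, hence constant and normalizable to $1$; the section is therefore a polynomial $\SL(2,\C)$-valued family whose inverse is again polynomial, and the reality identity \eqref{ggb=-id} then equates a polynomial in $\lambda$ with a polynomial in $\lambda^{-1}$, forcing constancy. Your alternative claim --- that the normalized $g$ itself extends holomorphically and \emph{invertibly} across $0$ and $\infty$, after which constancy follows since $\SL(2,\C)$ is affine --- is in fact true, but it requires an argument of exactly this kind, for instance: trivialize $\mathcal L$ near $\lambda=0$ by a local holomorphic section $\sigma$ with $\sigma(0)$ an isomorphism of the two stable Higgs pairs, write $g=f\sigma$ with $f$ holomorphic on the punctured disc, and use $\det g\equiv 1$ to get $f^2=1/\det\sigma$, so that $f$ is bounded, the singularity is removable, and $g(0)=f(0)\sigma(0)$ is invertible. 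As written, the heart of the proof is an assertion accompanied by the (correct) diagnosis that it is the hard part.

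There is a second, conceptual slip in the $\tau$-negative endgame. The element $g_0$ is constant only in $\lambda$: it is still a gauge transformation, i.e.\ a smooth map $g_0\colon\Sigma\to\SL(2,\C)$ with $g_0\overline{g_0}=-\mathrm{Id}$ pointwise, so ``a constant change of frame adapted to the quaternionic structure'' is not available --- a constant conjugation cannot normalize a matrix that varies over $\Sigma$. What is needed is a globally defined smooth gauge $h\colon\Sigma\to\SL(2,\C)$ with $g_0=-h\delta\overline{h}^{-1}$ (this is \eqref{eqgh} in the paper), applied to the lift as a $\lambda$-independent gauge. Its existence is a genuinely global statement about maps from $\Sigma$ into $\{g\in\SL(2,\C)\mid g\overline g=-\mathrm{Id}\}\cong\SL(2,\C)/\SU(2)$, and it holds because this space is contractible. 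The contrast with the $\tau$-positive case is the whole point of the theorem: there the relevant target $\{g\mid g\overline g=\mathrm{Id}\}\cong\SL(2,\C)/\SL(2,\R)$ retracts onto $S^2$, and Lemma \ref{ghhmb} shows that the degree of $g_0\colon\Sigma\to C$ obstructs exactly this sort of normalization. If your ``constant frame'' reasoning were valid, the same reasoning would put every $\tau$-positive admissible section into a real normal form, which the paper shows is impossible in general; so the pointwise quaternionic-structure observation must be upgraded to the global statement on $\Sigma$ before the self-duality equations can be read off.
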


\begin{proof}
Let us assume that the section $s$ is $\tau$-negative. Consider an admissible lift 
\[\nabla^\lambda\,=\,\nabla+\lambda^{-1}\Phi+\lambda\Psi\]
with $\Phi$ of type $(1,0)$ and $\Psi$ of type $(0,1)$.
By definition of admissibility (Definition \ref{admissible}) and Lemma \ref{lemma-pos-quat}, we have a family of $\SL(2,\C)$-gauge transformations
$g(\lambda)$ with
\begin{equation}\label{geq}
\overline{\nabla^{-\overline{\lambda}^{-1}}}=\nabla^\lambda.g(\lambda)\end{equation}
and
\begin{equation}\label{ggb=-id}-\overline{g(-\overline{\lambda}^{-1})}=g(\lambda)^{-1}.\end{equation}

We claim that $g$ is constant.

In order to prove that $g$ is constant, observe that we can extend $g$ holomorphically to $\lambda=0$ as follows:

We define a holomorphic line bundle $\mathcal L\,\longrightarrow\,\C P^1$ whose fibers over $\lambda\in\C^*$ are parallel homomorphism with 
respect to the $\lambda$-connection on $\text{End}(E)\,=\, E\otimes E^*$ given by 
$\nabla^\lambda$ and the dual of $\overline{\nabla^{-\overline{\lambda}^{-1}}}.$
Observe that the fiber $\mathcal L$ over $\lambda\,\in\,\C^*$ can be described as the space of global homomorphisms 
between the $\lambda$-connections
\[(\overline{\partial}^\nabla+\lambda\Psi,\, \lambda\partial^\nabla+\Phi,\, \lambda)\]
and
\[(\overline{\partial}^{\overline\nabla}-\lambda\overline\Phi,\, \lambda\partial^{\overline\nabla}-\overline\Psi, \, \lambda).\]
Since $(\overline{\partial}^\nabla+\lambda\Psi,\, \lambda\partial^\nabla+\Phi, \, \lambda)$ is irreducible, such a nonzero homomorphism must be
an isomorphism because its kernel, if it were nonzero, would contradict irreducibility. Consequently, any two such nonzero homomorphism would produce
an automorphism of $(\overline{\partial}^\nabla+\lambda\Psi,\, \lambda\partial^\nabla+\Phi,\, \lambda)$, hence they differ by a constant scalar
multiplication, again by irreducibility. Hence $\mathcal L$ is indeed a line bundle over $\C^*$.

Therefore, the line bundle $\mathcal L$ extends to $\lambda\,=\,0$ in such a way that the fiber over $0$ is given by the isomorphisms
between the stable Higgs pairs
\[(\overline{\partial}^\nabla,\, \Phi)\] and\[ (\overline{\partial}^{\overline\nabla},\, -\overline\Psi)\]
(together with the $0$-endomorphism). Of course, the line bundle $\mathcal L$ also extends holomorphically to $\lambda=\infty$
in an analogous manner.

We take a meromorphic section $g$ of $\mathcal L$ whose only zeros or poles are at $\lambda=\infty$. As the determinant of $g$ cannot be zero at some $\lambda\in \C\subset\C P^1$ (the connections are irreducible and the
Higgs pairs are stable, so every non-zero point in a fiber is invertible) it is constant and without loss of generality it can be
taken to be $1$.
Hence, we have the existence of a polynomial $g$ satisfying
\eqref{geq} and \eqref{ggb=-id}. But the inverse of a polynomial $\SL(2,\C)$-matrix is polynomial again, so we obtain that $g$ is constant in $\lambda$ by
looking at \eqref{ggb=-id}. This proves the claim. 

Next, from \[\overline{g}g\,=\,-\text{Id}\]
one can deduce the existence of
a smooth gauge $h\,\colon\,\Sigma\,\longrightarrow\, \SL(2,\C)$ such that
\begin{equation}\label{eqgh}g\,=\,-h\delta \overline{h}^{-1}\, ,\end{equation}
and \[\nabla^\lambda.h\] is a preferred
section, meaning, it comes from a solution of the self-duality equations.
\end{proof}

\begin{remark}
The statement of Theorem \ref{mainth} can be translated into Simpson's language of mixed twistor
structures \cite{Si3}; see also \cite{HS}, \cite{Sc}.
\end{remark}

In the case of a $\tau$-positive admissible section we can proceed as in the proof of Theorem \ref{mainth} and obtain a family of flat connections
\[\nabla^\lambda\,=\, \nabla+\lambda^{-1}\Phi+\lambda\Psi\, ,\]
and a map $g\,\colon\,\Sigma\,\longrightarrow\,\SL(2,\C)$,
satisfying the following two conditions:
\[\overline{\nabla^{-\overline{\lambda}^{-1}}}\,=\, \nabla^\lambda.g\]
and \[g\overline{g}\,=\, \text{Id}.\]
The following lemma is likely to be well-known, but we include a proof of it for completeness.

\begin{lemma}\label{circle_lemma}
The space of oriented circles on $S^2 \,=\, \C P^1$ is given
\[C\,=\,\{g\,\in\, \SL(2,\C)\,\mid\, g\overline{g}\,=\,{\rm Id}\}\,.\]
 It is isomorphic
to the space $\SL(2,\C)/\SL(2,\R)$ and the action
of $h\,\in\, SL(2,\C)$ on $g\,\in\, C$ is
\[g.h\,=\,h^{-1}g\overline{h}\, .\]
\end{lemma}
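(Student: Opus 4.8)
The plan is to move between the matrix condition $g\overline g\,=\,\mathrm{Id}$ and the geometry of anti-holomorphic involutions of $\C P^1$. Each $g\,\in\,\SL(2,\C)$ acts on $\C P^1$ by a M\"obius transformation $z\,\mapsto\, g\cdot z$, and hence defines an anti-holomorphic map $\sigma_g\,\colon\, z\,\mapsto\, g\cdot\overline z$. A direct computation shows that $\sigma_g\circ\sigma_g$ is the holomorphic M\"obius transformation $z\,\mapsto\,(g\overline g)\cdot z$, so $\sigma_g$ is an involution exactly when $g\overline g$ is a scalar matrix; since $\det(g\overline g)\,=\,1$, this scalar is $\pm1$. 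The value $+1$ gives the reflections (inversions) in a circle, for instance $\sigma_{\mathrm{Id}}(z)\,=\,\overline z$ with fixed set $\R\cup\{\infty\}$, while the value $-1$ gives the fixed-point-free involutions, as with $\sigma_\delta(z)\,=\,-1/\overline z$ where $\delta\overline\delta\,=\,\delta^2\,=\,-\mathrm{Id}$. Thus $C$ is exactly the set of $g$ whose $\sigma_g$ is a reflection in a circle, and $g\,\mapsto\,(\text{fixed circle of }\sigma_g)$ is the correspondence to be analysed.

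Next I would prove the homogeneous-space description. A short check shows that $g.h\,=\,h^{-1}g\overline h$ is a right action of $\SL(2,\C)$ preserving $C$, since $(g.h)\overline{(g.h)}\,=\,h^{-1}(g\overline g)h$ and $(g.h_1).h_2\,=\,g.(h_1h_2)$. The base point $\mathrm{Id}\,\in\, C$ has stabilizer $\{h\,\mid\,\overline h\,=\,h\}\,=\,\SL(2,\R)$, so the content is transitivity: every $g\,\in\, C$ must equal $\mathrm{Id}.h\,=\,h^{-1}\overline h$ for some $h$. For this I would use that $g\overline g\,=\,\mathrm{Id}$ makes the anti-linear map $J(v)\,=\,g\overline v$ on $\C^2$ an involution, $J^2\,=\,\mathrm{id}$; its real fixed space $V_\R\,=\,\{v\,\mid\, Jv\,=\,v\}$ is a real form of $\C^2$, and assembling a real basis of $V_\R$ into the columns of a matrix $k$ gives $g\overline k\,=\,k$, i.e. $g\,=\,k\overline k^{-1}$. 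Taking determinants forces $\det k\,\in\,\R^*$, which I normalise to $1$ by rescaling one basis vector by a real scalar; then $k\,\in\,\SL(2,\C)$ and $g\,=\,\mathrm{Id}.k^{-1}$. Hence the orbit map yields $C\,\cong\,\SL(2,\C)/\SL(2,\R)$.

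Finally I would match this with the oriented circles. Conjugating by the M\"obius action of $h$ gives $h\circ\sigma_g\circ h^{-1}\,=\,\sigma_{hg\overline h^{-1}}$, so $h$ carries the circle fixed by $\sigma_g$ to the one fixed by $\sigma_{hg\overline h^{-1}}$; thus the M\"obius action on circles is intertwined with $g\,\mapsto\, hg\overline h^{-1}$, which is the action of the Lemma with $h$ replaced by $h^{-1}$. Sending $\mathrm{Id}$ to the equator $\R\cup\{\infty\}$, the orbit maps give an $\SL(2,\C)$-equivariant bijection provided the stabilizers correspond. Here the stabilizer of $\mathrm{Id}\,\in\, C$ is $\SL(2,\R)$, whereas the full setwise stabilizer of $\R\cup\{\infty\}$ in $\SL(2,\C)$ is $\SL(2,\R)\,\sqcup\,\begin{pmatrix} i & 0\\ 0 & -i\end{pmatrix}\SL(2,\R)$, the nontrivial coset acting on the circle by $z\,\mapsto\,-z$ and hence reversing its orientation. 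Therefore $\SL(2,\R)$ is exactly the orientation-preserving stabilizer, and $C\,\cong\,\SL(2,\C)/\SL(2,\R)$ is the space of oriented circles. I expect the orientation bookkeeping to be the only delicate point: the map $g\,\mapsto\,\sigma_g$ is two-to-one with fibre $\{g,\,-g\}$, so $C$ a priori only double-covers the unoriented circles, and the coset computation above is what pins the two sheets to the two orientations. The remaining verifications — that the action preserves $C$ and the computation of the coset representative — are routine.
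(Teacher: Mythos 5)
Your proposal is correct, but it proves the lemma by a genuinely different route than the paper. The paper proceeds through explicit models: an oriented circle is the intersection of $S^2$ with an oriented plane $\{\langle v,x\rangle=c\}$, giving $C\cong S^2\times(-1,1)$; the rescaling $(v,c)\mapsto \tfrac{1}{1-c^2}(v,c)$ identifies this with the unit sphere in $\R^{3,1}$; realizing $\R^{3,1}$ as the space $H$ of skew-Hermitian $2\times2$ matrices with the determinant as quadratic form gives $C\cong H\cap\SL(2,\C)$, and left multiplication by $\delta$ carries this onto $\{g\in\SL(2,\C)\,\mid\, g\overline g=\mathrm{Id}\}$; transitivity is then imported from the Lorentz group, since $A\mapsto h^TA\overline h$ realizes the double cover $\SL(2,\C)\to\SO_+(3,1)$, which acts transitively on the unit sphere of $\R^{3,1}$. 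You instead read each $g\in C$ as the anti-holomorphic reflection $\sigma_g(z)=g\cdot\overline z$, and prove transitivity by pure linear algebra: $J(v)=g\overline v$ is a real structure on $\C^2$, a real basis of its fixed set assembled into a matrix $k$ gives $g=k\overline k^{-1}$, and a real rescaling normalizes $\det k=1$ (a Galois-descent/Hilbert-90 style argument). The identification with \emph{oriented} circles then comes from matching stabilizers -- $\SL(2,\R)$ on one side versus the orientation-preserving, index-two subgroup of the setwise stabilizer of $\R\cup\{\infty\}$ on the other -- rather than from an explicit model; your observation that the fibre $\{g,-g\}$ over an unoriented circle accounts for the two orientations is exactly the right bookkeeping, and your conjugation formula $h\circ\sigma_g\circ h^{-1}=\sigma_{hg\overline h^{-1}}$ correctly intertwines the two actions. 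Your route is more elementary and self-contained, and it makes the geometric meaning of $g$ transparent: it is literally the reflection whose fixed-point set is the circle being parametrized. What the paper's route buys is the explicit diffeomorphism $C\cong S^2\times(-1,1)$ and the skew-Hermitian matrix picture, both of which are reused later: the degree of maps $\Sigma\to C$ (Lemma \ref{ghhmb} and the $g$-degree) rests on $C\simeq S^2\times(-1,1)$, and the Toledo-invariant theorem in Section \ref{sec:realp} identifies great circles with the trace-zero locus in $H$. So the paper's proof is better integrated with the rest of the text, while yours is the cleaner standalone argument.
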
 

\begin{proof} 
An oriented circle $\gamma$ in $S^2$ may be written as $\gamma= P\cap S^2$, where $P\subset \R^3$ is an oriented plane. Such a plane is uniquely determined by its normal vector $v\in S^2$ together with a constant $c\in\R$:
\[
P\,=\, \{x\,\in\,\R^3\ \mid \ \langle v,x\rangle \,=\, c\}\, .
\]
The constant $c$ must satisfy $|c|<1$, so that $P$ and $S^2$ intersect in a circle. In other words, $C\,=\,S^2\times (-1,\,1)$, which is homotopy equivalent to $S^2$. The map $(v,c)\longmapsto \frac{1}{1-c^2}(v,c)$ establishes a diffeomorphism 
\[C\,\cong\, \{y\,\in\, \R^{3,1}\ \mid \ \|y\|_{3,1}^2 \,=\, 1\}\, .\]
We consider $\R^{3,1}$ as the space $H$ of complex skew-Hermitian $2\times 2$-matrices 
\[ 
H \,= \,\left\{A\,=\,\left(\begin{array}{cc}\sqrt{-1}\lambda & \overline{w} \\ -w &
\sqrt{-1}\mu \end{array}\right)\ \mid \ \lambda,\mu\,\in\,\R,\ w\,\in\,\C\right\} 
\]
equipped with the quadratic form given by the determinant: 
\begin{equation}\label{qf}
A\,=\,\left(\begin{array}{cc}\sqrt{-1}\lambda & \overline{w} \\ -w &
\sqrt{-1}\mu \end{array}\right)\, \longmapsto\, 
\det(A) \,=\, |w|^2-\lambda\mu\, . 
\end{equation}
{}From this identification it follows that
\[ C \,\cong\, H\cap \SL(2,\C).\]
Define a right-action of $\SL(2,\C)$ on $H$ by 
\[
h\, \longmapsto\, \{A\,\longmapsto\, h^TA\overline{h}\}\, .
\]
This action preserves the quadratic form in \eqref{qf} since $\det h \,=\,1$.
Consequently, we get an action of $\SL(2,\C)$ on $C$. In fact, this is a way to realize the double cover $\SL(2,\C)
\,\longrightarrow\, \SO_+(3,1)$. In particular, the action of $\SL(2,\C)$ on $C$ is
transitive; indeed, this follows using the fact that the action of
$\SO_+(3,1)$ on the unit sphere in $\R^{3,1}$ is transitive. Now consider the matrix
$\delta \,=\, \left(\begin{array}{cc} 0 & -1 \\ 1 & 0\end{array}\right)$. Multiplication
by $\delta$ on the left gives an identification 
\[
C\,\longrightarrow\, \{g\,\in\, \SL(2,\C)\,\mid\, g\overline{g}\,=\,\text{Id}\}\, .
\]
Indeed, the condition $g\overline{g} = \text{Id}$ implies that $g$ is of the form 
\[ 
g \,=\, \left(\begin{array}{cc}w & -\sqrt{-1}
\mu \\ \sqrt{-1}\lambda & \overline{w}\end{array}\right) = \delta \left(\begin{array}{cc}\sqrt{-1}\lambda & \overline{w} \\ -w & \sqrt{-1}\mu \end{array}\right).
\]
Now if $h\,\in\,\SL(2,\C)$, then using $h\delta h^T \,= \,\delta$, i.e., $\delta h^T
\,=\, h^{-1}\delta$ we see that
\[ 
\delta h^TA\overline{h} \,=\, h^{-1}\delta A\overline{h}\, ,
\]
as desired. It is now clear that $\SL(2,\R)$ is the stabilizer of the point $\text{Id} \in C$, and the identification $C\cong\SL(2,\C)/\SL(2,\R)$ follows.
\end{proof}

We see that a $\tau$-positive admissible section gives rise to a map $g\,\colon\,\Sigma\,\longrightarrow\, C$. 
Every such map has a degree as $C\,
=\,S^2\times (-1,\,1)$.
\begin{lemma}\label{ghhmb}
A map $g\,\colon\, \Sigma\,\longrightarrow\, C$ is given by
\[
g\,=\, h\overline{h}^{-1}\, ,
\] 
for some $h\,\colon\, \Sigma\,\longrightarrow\, \SL(2,\C)$
if and only if $g$ is of degree $0.$
\end{lemma}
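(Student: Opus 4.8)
The plan is to read the statement as a topological lifting problem for the map
\[
\Phi\,\colon\,\SL(2,\C)\,\longrightarrow\, C\,,\qquad h\,\longmapsto\, h\overline{h}^{-1}\,.
\]
First I would verify that $\Phi$ is a surjective principal $\SL(2,\R)$-bundle. Surjectivity follows from the transitivity of the $\SL(2,\C)$-action in Lemma \ref{circle_lemma}: after the substitution $h\mapsto h^{-1}$, the image of $\Phi$ coincides with the orbit of $\text{Id}$, which is all of $C$. For the fibers, a short computation shows that $h_1\overline{h_1}^{-1}=h_2\overline{h_2}^{-1}$ holds if and only if $h_2^{-1}h_1=\overline{h_2^{-1}h_1}$, i.e. $h_2^{-1}h_1\in\SL(2,\R)$; thus the fibers are exactly the right $\SL(2,\R)$-cosets, and $\Phi$ is (via the Cartan embedding of the symmetric space) the quotient map $\SL(2,\C)\to\SL(2,\C)/\SL(2,\R)\cong C$. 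Consequently a smooth map $g\colon\Sigma\to C$ factors as $g=h\overline{h}^{-1}=\Phi\circ h$ for some $h\colon\Sigma\to\SL(2,\C)$ if and only if $g$ lifts through $\Phi$, which for a principal bundle is equivalent to the pulled-back bundle $g^*\Phi$ over $\Sigma$ admitting a section, i.e. being trivial.

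It then remains to compute the classifying invariant of $g^*\Phi$. Using the Iwasawa decompositions, $\SL(2,\R)$ deformation retracts onto $\SO(2)\cong S^1$ and $\SL(2,\C)$ onto $\SU(2)\cong S^3$, while $C\simeq S^2\times(-1,\,1)\simeq S^2$ by the explicit description in Lemma \ref{circle_lemma}. Hence $\Phi$ is, up to homotopy, an $S^1$-bundle $S^1\to S^3\to S^2$, and principal $\SL(2,\R)$-bundles over a surface are classified by their Euler class in $H^2(\,\cdot\,;\mathbb{Z})$ (equivalently by a map to $B\SL(2,\R)\simeq BS^1=K(\mathbb{Z},2)$). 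I would then argue that $e(\Phi)\in H^2(C;\mathbb{Z})\cong\mathbb{Z}$ is a generator: the total space $\SL(2,\C)\simeq S^3$ is simply connected, while a circle bundle over $S^2$ of Euler number $n$ has total space the lens space $L(n,1)$, which is simply connected only for $n=\pm1$; equivalently, the homotopy sequence forces the connecting map $\pi_2(C)\cong\mathbb{Z}\to\pi_1(\SL(2,\R))\cong\mathbb{Z}$ to be an isomorphism, so $\Phi$ is the Hopf fibration.

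Finally, naturality of the Euler class gives $e(g^*\Phi)=g^*e(\Phi)$. Under the identifications $H^2(C;\mathbb{Z})\cong\mathbb{Z}$ (generated by $e(\Phi)$) and $H^2(\Sigma;\mathbb{Z})\cong\mathbb{Z}$, the map $g^*$ on top cohomology is multiplication by the degree $\deg(g)$ of $g\colon\Sigma\to C\simeq S^2$, so $e(g^*\Phi)=\pm\deg(g)$ and $g^*\Phi$ is trivial precisely when $\deg(g)=0$. Combined with the lifting equivalence of the first paragraph, this yields both implications of the lemma. I expect the only genuine point to be the identification of $e(\Phi)$ with a generator (equivalently, that $\Phi$ is the Hopf fibration and not a trivial $S^1$-bundle); the remaining steps are formal, and one must only be mildly careful to pass between the continuous and smooth categories (a continuous section of a smooth bundle may be smoothed) and to fix the orientation conventions entering the definition of $\deg(g)$.
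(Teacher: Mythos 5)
Your proof is correct, but it takes a genuinely different route from the one in the paper. The paper handles the forward implication by a direct homotopy argument: since $\SL(2,\C)$ deformation retracts onto $\SU(2)\cong S^3$, which is $2$-connected, every map $h\colon\Sigma\to\SL(2,\C)$ is null-homotopic; as $g=h\overline{h}^{-1}$ is the result of acting on the constant map ${\rm Id}$ by $h^{-1}$ (in the sense of Lemma \ref{circle_lemma}), it is homotopic to the constant map and hence has degree $0$. For the converse the paper merely asserts that when the degree vanishes ``it is not too hard to construct such a map $h$ by hand.'' You instead recast both implications as the triviality of the pullback under $g$ of the principal $\SL(2,\R)$-bundle $\Phi\colon\SL(2,\C)\to C$, $h\mapsto h\overline{h}^{-1}$ (the Cartan embedding of the symmetric space $\SL(2,\C)/\SL(2,\R)$), identify $e(\Phi)$ with a generator of $H^2(C;\mathbb{Z})$ via the homotopy exact sequence of $\SL(2,\R)\to\SL(2,\C)\to C$ (a Hopf-type fibration, since $\SL(2,\C)\simeq S^3$ is simply connected while $\SL(2,\R)\simeq S^1$), and conclude by naturality that $e(g^*\Phi)=\pm\deg(g)$. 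The comparison: the paper's forward argument is more elementary, needing only the $2$-connectivity of $\SL(2,\C)$ and nothing about $e(\Phi)$, whereas your obstruction-theoretic argument invokes the Euler class computation even for that direction; on the other hand, your approach treats both directions uniformly and, crucially, supplies a complete proof of the converse (degree $0$ implies existence of $h$), which is precisely the step the paper leaves as an unproved assertion. Both arguments ultimately rest on the same topology, namely the fibration $S^1\simeq\SL(2,\R)\to\SL(2,\C)\simeq S^3\to C\simeq S^2$, together with the fact that the Euler class is a complete invariant of such bundles over the $2$-complex $\Sigma$.
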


\begin{proof}
Any map $h\,\colon\, \Sigma\,\longrightarrow\,\SL(2,\C)$ is homotopic to the constant map which maps to the identity element. Therefore, the action $h\,\longmapsto 
\,g.h\,=\,h^{-1}g\overline{h}$ cannot change the degree
of the map $g$. On the other hand, if $g$ has degree 0, it is not too hard to construct such a map $h$ by hand.
\end{proof}

We would like to put any
 $\tau$-positive admissible section $s$ of $\mathcal M_{DH}$
into a real normal form
\[\nabla+\lambda^{-1}\Phi-\lambda\overline{\Phi}\, ,\]
where $\overline{\nabla}\,=\,\nabla,$ analogously to the associated family of flat connections of a $\tau$-negative admissible (i.e. preferred) section.
Lemma \ref{ghhmb} shows that there is a topological obstruction which prevents us from doing so in the general case.

\section{Real structure covering $\lambda\longmapsto\overline{\lambda}^{-1}$ and real sections}\label{sec:realp}

We have already seen in Examples \ref{ex:basic} and \ref{Ex:SD_HM_real} that harmonic maps $f\,\colon\, \Sigma
\,\longrightarrow\, S^3\,=\, {\rm SU}(2)$ give rise to sections of $\mathcal M_{DH}(\Sigma)\,\longrightarrow\,\C P^1$ which are real with
respect to the natural lift 
$$\rho \,= \,\tau\circ N\,\colon\, \mathcal M_{DH}\,\longrightarrow\, \mathcal M_{DH}$$ 
defined in Section \ref{sec:Involutions} of the involution $\lambda\,\longmapsto\,\overline{\lambda}^{-1}$. 

With respect to a lift $\nabla^\lambda$ on $\mathbb C\subset\mathbb CP^1$ of $s$, the property of $s$ being $\rho$-real
exactly means that for 
every $\lambda\,\in\,\mathbb C\setminus\{0\}$ there is a gauge transformation $g(\lambda)$ such that
\begin{equation}\label{realeqsecrho4}
\nabla^\lambda.g(\lambda)\,=\,\overline{\nabla^{\overline{\lambda}^{-1}}}\, .
\end{equation}
As in the case of the real involution $\tau$ in the previous section, we only consider irreducible sections $s$ of $\mathcal M_{DH}$ of parity $0$ and holomorphic families $g(\lambda)$ of gauge transformations with values in $\SL(2,\mathbb C)$. Again, the family of gauge transformation $g(\lambda)$ in \eqref{realeqsecrho4} is then unique up to sign.

\begin{proposition}
Any $\rho$-negative section is obtained from a solution to the harmonic map equations and admissible.
\end{proposition}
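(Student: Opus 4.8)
The plan is to follow the architecture of the proof of Theorem \ref{mainth}, with the crucial difference that admissibility is now part of the conclusion and must be derived from the negative reality condition rather than assumed. So I would begin by fixing a lift $\nabla^\lambda$ of $s$ over $\C$ (Lemma \ref{lift-section}) and, since $s$ has parity $0$, an $\SL(2,\C)$-valued family $g(\lambda)$ that is holomorphic on $\C\setminus\{0\}$ and satisfies $\nabla^\lambda.g(\lambda)=\overline{\nabla^{\overline{\lambda}^{-1}}}$ together with $g(\lambda)\,\overline{g(\overline{\lambda}^{-1})}=-\mathrm{Id}$, the latter being the defining sign of a $\rho$-negative section (Definition \ref{Def:RealPosNeg}). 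Because $\widetilde\rho$ fixes the unit circle pointwise, the two canonical lifts of $s$ over $\C$ and over $\C P^1\setminus\{0\}$ may be taken to be $\nabla^\lambda$ and its $\rho$-conjugate $\overline{\nabla^{\overline{\lambda}^{-1}}}$; then $g$ is exactly the transition loop whose membership in the big cell governs admissibility through Proposition \ref{strange-prop}.

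The decisive step, and the one I expect to be the main obstacle, is establishing admissibility, because this is where the negative sign is used in an essential way. Restricting to $|\lambda|=1$, where $\overline{\lambda}^{-1}=\lambda$, the negative condition becomes $g(\lambda)\,\overline{g(\lambda)}=-\mathrm{Id}$, and a short computation shows that $\delta^{-1}g(x,\lambda)$ is then Hermitian of determinant $1$ for every $x\in\Sigma$ and every $\lambda\in S^1$. Since a $2\times 2$ Hermitian matrix of determinant $1$ is automatically definite and the signature is locally constant, after possibly replacing $\delta$ by $-\delta$ we may assume $\delta^{-1}g(x,\cdot)$ is positive definite on $S^1$. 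The spectral (Iwasawa) factorization of positive-definite loops (Theorem 8.1.2 in \cite{PrSe}) yields $\delta^{-1}g(x,\cdot)=w(x,\cdot)\,w(x,\cdot)^*$ with $w$ extending holomorphically across $\lambda=0$, so that $g_x=(\delta w)(w^*)$ is a big-cell factorization; by Proposition \ref{strange-prop} the section $s$ is admissible. It is precisely the positive-definiteness forced by the negative sign that makes this factorization global: in the $\rho$-positive ($AdS^3$) case the analogous matrix is indefinite, the loops need not lie in the big cell, and this is the origin of the additional integer invariant appearing there.

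With admissibility secured I would then reproduce the argument of Theorem \ref{mainth} for $\rho$. Form the holomorphic line bundle over $\C^*$ whose fibre over $\lambda$ consists of parallel homomorphisms between the $\lambda$-connection of $\nabla^\lambda$ and that of $\overline{\nabla^{\overline{\lambda}^{-1}}}$, extend it across $\lambda=0,\infty$ using the stability of the Higgs bundles $s(0)$ and $s(\infty)$ supplied by irreducibility, and realise $g$ as a section with poles only at $\infty$, hence polynomial in $\lambda$ with $\det g\equiv1$; the reality relation $\overline{g(\overline{\lambda}^{-1})}=-g(\lambda)^{-1}$ then forces both $g$ and $g^{-1}$ to be polynomial, which is possible only if $g$ is constant in $\lambda$, exactly as in Theorem \ref{mainth}. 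A constant $g$ with $g\overline{g}=-\mathrm{Id}$ is a map $\Sigma\to\{g\in\SL(2,\C)\mid g\overline{g}=-\mathrm{Id}\}\cong\SL(2,\C)/\SU(2)$, and since this target is contractible---in contrast to the topological obstruction of Lemma \ref{ghhmb} in the positive case---there is a smooth $h\colon\Sigma\to\SL(2,\C)$ with $g=h\delta\overline{h}^{-1}$, analogous to \eqref{eqgh}. Gauging $\nabla^\lambda$ by $h$ replaces the reality gauge by the constant $\delta$, and comparing powers of $\lambda$ in $\overline{(\nabla^\lambda.h)^{\overline{\lambda}^{-1}}}=(\nabla^\lambda.h).\delta$ forces the normal form $\nabla^\lambda.h=\nabla+\lambda^{-1}\Phi-\lambda\Phi^*$ with $\nabla$ a unitary connection, i.e.\ a solution of the signed self-duality equations \eqref{harmonics2}. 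By Example \ref{ex:basic}(ii) this is the family of flat connections of a harmonic map $f\colon\Sigma\to S^3=\SU(2)$, giving both the admissibility and the harmonic-map origin of $s$.
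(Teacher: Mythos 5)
Your proposal is correct, but it is structured quite differently from the paper's own proof, which is only two lines long: the paper simply remarks that the $\SU(2)$ loop group Iwasawa decomposition is \emph{globally} defined on the loop group (citing \cite{PrSe}) and that the section is therefore automatically admissible, deferring all details --- including the reconstruction of the harmonic map --- to \cite[Theorem 6]{He3}. What you have done is to make that citation-based argument self-contained inside the paper's own framework, and the mechanism you isolate is exactly the right one: the $\rho$-negative sign forces $\delta^{-1}g(x,\cdot)$ to be Hermitian of determinant $1$, hence definite, on the unit circle, and definiteness is precisely what makes the loop-group factorization global (your computation $g\overline{g}=-{\rm Id}$, $\det g =1$ $\Rightarrow$ $(\delta^{-1}g)^*=\delta^{-1}g$ checks out, using $g^*=-\delta g\delta^{-1}$). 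From there, big-cell membership plus Proposition \ref{strange-prop} gives admissibility, and your second half --- running the Theorem \ref{mainth} machinery to make $g$ constant, then using contractibility of $\SL(2,\C)/\SU(2)$ in place of the obstructed Lemma \ref{ghhmb} situation to write $g=h\delta\overline{h}^{-1}$ and extract the normal form $\nabla+\lambda^{-1}\Phi-\lambda\Phi^*$ with $\nabla$ unitary --- is a faithful adaptation of the paper's $\tau$-negative argument (compare \eqref{eqgh}) that the paper never spells out for $\rho$. Your contrast with the $\rho$-positive case (indefinite/skew-Hermitian loops, whence the $g$-degree invariant) is also the correct conceptual explanation of why the sign matters. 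Two minor caveats, neither a gap: Theorem 8.1.2 of \cite{PrSe} is the Birkhoff/big-cell statement, not literally the spectral factorization of positive-definite loops; the latter is classical (Wiener--Masani, or it follows from Birkhoff plus a Toeplitz-operator injectivity argument using positivity), so you should cite it as such rather than as the Iwasawa theorem. And the set $\{g\in\SL(2,\C)\,\mid\, g\overline{g}=-{\rm Id}\}$ is a disjoint union of \emph{two} copies of $\SL(2,\C)/\SU(2)$ (the orbits of $\delta$ and $-\delta$); since $\Sigma$ is connected and $\pm g$ induce the same gauge transformation, you may pass to the component of $\delta$, but this normalization should be stated.
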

\begin{proof} 
This proposition follows from
the fact that the $SU(2)$ loop group Iwasawa decomposition is globally defined on the loop group \cite{PrSe}, and the section $s$ is thus automatically admissible, see \cite[Theorem 6]{He3}.
\end{proof}

If $s$ is admissible and $\rho$-positive, then there exists a lift
\[\nabla^\lambda\,=\,\lambda^{-1}\Phi+\nabla+\lambda\Psi\]
of $s$, and once again we can deduce that $g$ is constant in $\lambda$. As in the discussion before
Lemma \ref{circle_lemma} we see that $g$ is a map into the space $C$ of oriented circles in $\C P^1,$
and as $C$ retracts to $S^2$ this map has a degree.

\begin{definition}
The {\it $g$-degree} of a $\rho$-positive admissible section $s$ is defined to be the degree of the map $g$ as in \eqref{realeqsecrho}.
\end{definition}

It is not hard to see that the $g$-degree is really an invariant of $s$ and does not depend on the choice of an admissible lift.
The relevance of the notion of the $g$-degree comes from its relation to the Toledo invariant:

\begin{theorem}
Let $[(\overline{\partial},\,\Phi)]$ be the gauge orbit of a stable Higgs pair which
is invariant under the involution $N$ \eqref{N}, with $\Phi\,\neq\,0.$ Then, the corresponding preferred section $s$
of $\mathcal M_{DH}$ is also a positive $\rho$-real section, and its Toledo invariant equals to its $g$-degree.
\end{theorem}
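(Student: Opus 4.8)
The plan is to connect two invariants attached to an $N$-invariant stable Higgs pair: the Toledo invariant, which is an algebraic/topological quantity coming from the Higgs bundle structure, and the $g$-degree, which measures the degree of the map $g\colon\Sigma\to C$ into the space of oriented circles arising from the $\rho$-positive reality condition. The first step is to verify that the preferred section $s$ associated to the solution $(\nabla,\Phi)$ of the self-duality equations is indeed $\rho$-positive. The $N$-invariance of $[(\overline\partial,\Phi)]$ means there is a gauge transformation $g_N$ with $\nabla.g_N=\nabla$ and $g_N^{-1}\Phi g_N=-\Phi$; exactly as in the proof of Theorem \ref{countertheorem}, combining this $g_N$ with the reality data $\delta$ (and using $\delta^2=-\mathrm{Id}$) should produce a family of $\SL(2,\C)$-gauge transformations realizing \eqref{realeqsecrho4} and satisfying $g\overline g=\mathrm{Id}$, which is the $\rho$-positive condition of Definition \ref{Def:RealPosNeg}.

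Next I would exhibit the map $g\colon\Sigma\to C$ explicitly. Since $s$ is preferred (hence admissible, with a real normal form) and $\rho$-positive, the constancy-in-$\lambda$ argument from Theorem \ref{mainth} applies, so the reality gauge $g(\lambda)$ reduces to a genuine map $g\colon\Sigma\to\SL(2,\C)$ with $g\overline g=\mathrm{Id}$; by Lemma \ref{circle_lemma} this is precisely a map into $C\cong\SL(2,\C)/\SL(2,\R)$, whose degree is by definition the $g$-degree. The point $N$-invariance buys us is that $g$ factors through the gauge transformation $g_N$ realizing $\Phi\mapsto-\Phi$; geometrically this $g_N$ (with $g_N^2=-\mathrm{Id}$) records how the eigenline decomposition of $\Phi$ sits relative to the unitary structure. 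I would identify the relevant line bundle: the $N$-invariance together with stability forces $\Phi$ to be nilpotent or to have a splitting $E=L\oplus L^{-1}$ compatible with the circle action, so $g$ is built from the clutching data of a line bundle $L$ (or the eigenline bundle of $\Phi$), and its degree into $S^2\subset C$ is read off from $\deg L$.

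**The main step** is then the numerical identification $\deg(g)=\tau_{\mathrm{Tol}}(s)$. Here I would recall that for Higgs bundles in the Hitchin component the Toledo invariant is computed as (a multiple of) the degree of the destabilizing/eigenline subbundle determined by the Higgs field, and compare this with the degree computation for $g$ carried out in the previous paragraph. Concretely, both sides should equal $\pm\deg L$ up to the same normalization: the Toledo invariant via its standard expression in terms of the Higgs field's eigenline decomposition, and the $g$-degree via the homotopy class of $g\colon\Sigma\to C\simeq S^2$ as computed from the transition data of $L$. Matching the two requires tracking the orientation conventions and the factor relating $\deg L$ to the Toledo number, which is where the $\sqrt{\lambda}$-type ambiguity of the frame (as in $h(\lambda)$ of Theorem \ref{countertheorem}) must be handled carefully.

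\emph{The hard part} will be pinning down the precise normalization so that the two integer invariants agree on the nose rather than merely up to a universal constant. The $g$-degree is defined through the retraction $C\simeq S^2$, while the Toledo invariant carries its own conventional factor (often $\tfrac12\deg$ or $\deg$ depending on the normalization of the symplectic form on the target symmetric space); reconciling these, and checking that the sign is consistent with the chosen orientations of $\Sigma$ and of $C$, is the delicate point. I would resolve it by testing against a known example in the Hitchin component — for instance a Fuchsian point or the explicit $L\oplus L^{-1}$ model — where both the Toledo invariant and the degree of $g$ can be computed directly, thereby fixing the constant and completing the proof.
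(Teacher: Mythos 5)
Your proposal follows essentially the same route as the paper's proof: $\rho$-positivity is obtained by combining the $N$-invariance gauge (which acts as $\pm\sqrt{-1}$ on Hitchin's splitting $L\oplus\widetilde L$ of an $N$-fixed stable Higgs bundle) with $\delta$, exactly as in the second part of the proof of Theorem \ref{countertheorem}, and both invariants are then identified with $\deg L$. The step you flag as the hard part is settled in the paper by noting that $\delta^{-1}\widetilde g\delta$ is trace-free, so $g$ lands in the great-circle sphere $S^2\subset C$, where its degree equals the degree of its $\sqrt{-1}$-eigenline bundle $L$; since the paper takes Goldman's rank-two normalization, in which the Toledo invariant \emph{is} $\deg L$, no constant-fixing against examples is required.
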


\begin{proof}
The first part of the theorem, which says that $s$ is a positive $\rho$-real section, is totally
analogous to the second part of the proof of Theorem \ref{countertheorem}, using the fact from
\cite{Hi1} that $\overline\partial$ must be reducible to the direct sum of line bundles $L\oplus \widetilde{L}$ (where $\text{degree}(L)\,\geq\,0$). The Toledo invariant (first defined by Goldman in the rank 2 case) is then given by the degree
of the destabilizing line subbundle $L$. 

We claim that the map $g\,\colon\,\Sigma\,\longrightarrow\, C$ 
takes values in the space of oriented great circles in $\C P^1$.

The above claim can be easily deduced from the description in Lemma \ref{circle_lemma} as follows: 
we have, similarly to \eqref{gdelta} in the proof of Theorem \ref{countertheorem}, that
\[g\,=\,\widetilde{g} \delta\]
where $\widetilde g$ acts by multiplication with $\pm \sqrt{-1}$ on $L$ and $L^*$,
respectively. Then, the map $g$ corresponds to 
\[\delta^{-1}\widetilde{g}\delta\]
in the space $H$ of skew-Hermitian $2\times2$ matrices of determinant 1 via the
identification in Lemma \ref{circle_lemma}. Moreover, this $\delta^{-1}\widetilde{g}\delta$ has trace $0$ which implies via
the identification $(v,\,c)\,\longmapsto\, \frac{1}{1-c^2}(v,\,c)$ that $g$ maps to the space of great circle. This proves the
claim.

Finally, the degree of the map $g$ is then given by the degree of its
eigen-line bundle $L$ for the eigenvalue $\sqrt{-1}$; see e.g. \cite[Section 2.1]{He} and the references therein.
\end{proof}

\subsection{Description of hyper-symplectic structure for harmonic sections}\label{twist-harm}

In \cite{Roe}, the third author has shown that an open subset $\mathcal R$ of the moduli space of gauge theoretic solutions of the harmonic 
map equations \eqref{harmonics2} admits the structure of a hyper-symplectic manifold. Recall that a hyper-symplectic structure on a manifold 
$M^{4k}$ is given by a quadruple $(g,\,I,\,S,\,T)$, where $g$ is a pseudo-Riemannian metric of signature $(2k,2k)$ and $I,\, S,\, T\,\in\, 
\Gamma(\mathrm{End}(TM))$ are parallel skew-adjoint endomorphisms satisfying $$\mathrm{id}\,=\, - I^2 \,=\, S^2 \,=\, T^2
\ \ \text{ and }\ \ IS \,=\, T \,=\, -SI\, ;$$ see 
\cite{Hi4}, \cite{DancerSwann}. While \cite{Cortes} discusses the general twistor spaces for paraquaternionic K\"ahler manifolds (including 
hyper-symplectic manifolds), and \cite{BRR} describe twistor theory for hyper-symplectic manifolds, we give an interesting twistorial 
description of the hyper-symplectic structure using the Deligne-Hitchin setup to re-obtain some of the results of \cite{Roe}. In particular, 
we obtain the hyper-symplectic analogue of Theorem 3.3 of \cite{HKLR}.

\begin{theorem}\label{twistordes}
Let $\Sigma$ be a compact Riemann surface of genus $g$ and $\mathcal M_{DH}$ its Deligne-Hitchin twistor space
equipped with the real involution $\rho$ covering $\lambda\,\longmapsto\,\overline{\lambda}^{-1}$.
Then, there exists an open subset $\mathcal R$ in the space of $\rho$-real sections such that
\begin{enumerate}
\item $\mathcal R$ contains the constant sections corresponding to irreducible flat unitary connections;
\item the normal bundle $\mathcal N_s$ of every section $s\in\mathcal R$ is $\mathbb C^d\otimes{\mathcal O}_{\C P^1}(1)$, where $d\,=\,6g-6;$
\item the induced real structures of the normal bundles 
have the property that non-trivial real holomorphic sections are not vanishing over $\{|\lambda|\neq 1\}$.
\end{enumerate}
The space $\mathcal R$ is a manifold whose tangent space at $s$ is given by the real holomorphic sections of
$\mathcal N_s.$ Moreover, $\mathcal R$ is equipped with a hyper-symplectic structure which is uniquely determined by the above data and the twistor space structure on $\mathcal M_{DH}$.\end{theorem}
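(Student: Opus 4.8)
The plan is to carry over the twistorial reconstruction of \cite[Theorem 3.3]{HKLR} to the split real structure $\rho$, with the space of oriented circles $C\cong\SL(2,\C)/\SL(2,\R)$ of Lemma \ref{circle_lemma} playing the role that the round $2$-sphere $\SL(2,\C)/\SU(2)$ plays in the classical hyper-K\"ahler picture. The base points of $\mathcal R$ are the \emph{constant} sections $\nabla^\lambda\equiv\nabla$ attached to irreducible flat unitary connections $\nabla$: since such a section has $\Phi=0$, the involution $N$ fixes it, so it is $\rho$-real as well as $\tau$-real. Being a genuine twistor line of the hyper-K\"ahler manifold $\mathcal M^{irr}_{SD}$, its normal bundle is the standard $\mathcal N_s\cong\C^d\otimes\mathcal O_{\C P^1}(1)$ with $d=\dim_\C\mathcal M^{irr}_{SD}=6g-6$. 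This yields (1) and verifies (2) along the base points.

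Next I would dispose of the deformation-theoretic part. The splitting type $\mathcal N_s\cong\C^d\otimes\mathcal O_{\C P^1}(1)$ is equivalent to $H^1(\C P^1,\mathcal N_s)=0$ together with $\deg\mathcal N_s=d$, hence open by semicontinuity; so it persists on an open neighbourhood of the base points inside the space of $\rho$-real sections, which we take as $\mathcal R$. Because $H^1(\C P^1,\mathcal N_s)=0$, Kodaira's theorem makes the holomorphic sections near $s$ into a smooth complex manifold of dimension $h^0(\mathcal N_s)=2d$, and the anti-holomorphic involution induced by $\rho$ cuts $\mathcal R$ out as its real locus, with $T_s\mathcal R=H^0(\C P^1,\mathcal N_s)^{\rho}$ of real dimension $2d=12g-12$. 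For (3) I would determine the $\rho$-induced real structure on each summand $\mathcal O_{\C P^1}(1)$: since $\rho$ covers $\lambda\mapsto\overline\lambda^{-1}$, whose fixed locus is $\{|\lambda|=1\}$, the unique zero of a real section of $\mathcal O_{\C P^1}(1)$ is forced onto the unit circle, so a non-trivial real section cannot vanish for $|\lambda|\neq 1$. This is the \emph{split} (rather than quaternionic) real structure carried by $\mathcal O_{\C P^1}(1)$, and it is exactly what distinguishes $\rho$ from $\tau$.

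The hyper-symplectic data is then extracted from the twisted symplectic form. Identifying $\mathcal N_s\cong s^*T_F$, the section $\omega^\C\in H^0(\mathcal M_{DH},\Lambda^2 T_F^*\otimes\pi^*\mathcal O_{\C P^1}(2))$ restricts to a fibrewise pairing $\mathcal N_s\times\mathcal N_s\to\mathcal O_{\C P^1}(2)$, so for $X,Y\in T_s\mathcal R$ the assignment $\lambda\mapsto\omega_\lambda^\C(X,Y)$ is a global section of $\mathcal O_{\C P^1}(2)$. Writing
\[
\omega_\lambda^\C(X,Y)=\omega_+(X,Y)+\lambda\,\omega_0(X,Y)+\lambda^2\,\omega_-(X,Y)
\]
and using the isomorphism $\mathcal N_s\cong\C^d\otimes H^0(\mathcal O_{\C P^1}(1))$ to transport the tautological $\SL(2)$-action on the factor $H^0(\mathcal O_{\C P^1}(1))\cong\C^2$, I would define the endomorphisms $I,S,T$ of $T_s\mathcal R$ as the real generators of the relevant Lie algebra. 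The crucial point is that the reality imposed by $\rho$ selects the \emph{split} real form $\SL(2,\R)\cong\SU(1,1)$: the reality relations on the coefficients read $\overline{\omega_+}=\omega_-$ with $\omega_0$ \emph{real} (as opposed to imaginary in the antipodal case of $\tau$), so the associated invariant quadric $\{xI+yS+zT\}$ is a hyperboloid rather than a sphere. Consequently two of the three generators square to $+\mathrm{id}$ and the third to $-\mathrm{id}$, giving $-I^2=S^2=T^2=\mathrm{id}$ and $IS=T=-SI$, while the metric $g$ reconstructed from $\omega_0$ and $I$ acquires neutral signature $(2k,2k)$ with $k=3g-3$.

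What remains is to promote this pointwise algebra to a \emph{parallel} structure, and this is where I expect the main difficulty. As in \cite{HKLR}, the argument reduces, via the para-quaternionic analogue of Hitchin's lemma, to the closedness of the three fundamental $2$-forms $\omega_I,\omega_S,\omega_T$ built from $g$ and $I,S,T$; their closedness is forced by the holomorphicity of $\omega^\C$ together with the fact that, for each fixed $\lambda$, evaluation pulls back the holomorphic symplectic form on the fibre $\pi^{-1}(\lambda)$ to a closed form on the ambient complex manifold of sections, whence each $\lambda$-coefficient restricts to a closed form on the real locus $\mathcal R$. Uniqueness subject to (1)--(3) is then automatic, since $g,I,S,T$ were produced canonically from $\omega^\C$ and $\rho$. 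The delicate points are twofold: pinning down the reality signs so that one genuinely lands on the para-complex normalisation $S^2=T^2=+\mathrm{id}$ rather than on three complex structures, and proving non-degeneracy of $g$ --- here property (3) is indispensable, as the non-vanishing of real normal sections off the unit circle is precisely what guarantees that $\omega_0$ pairs $T_s\mathcal R$ non-degenerately and hence that $g$ has the asserted signature $(2k,2k)$.
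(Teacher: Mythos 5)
Your proposal is correct in outline and follows essentially the same route as the paper: constant sections attached to irreducible flat unitary connections as base points, Kodaira's theorem \cite{Ko} plus openness of the splitting type to produce the complex family of sections and its real locus $\mathcal R$, a pairing built from the twisted form $\omega^\C$ and the Wronskian on $H^0(\C P^1,{\mathcal O}_{\C P^1}(1))$, structure endomorphisms coming from the fact that ${\mathcal O}_{\C P^1}(\pm1)$ carries a real (not quaternionic) structure over $\lambda\mapsto\overline\lambda^{-1}$, and parallelism via the hyper-symplectic version of Hitchin's lemma (Lemma \ref{inthyp}) together with the HKLR closedness argument. The genuine divergence is your treatment of condition (3): the paper proves it only at the unitary base points, by computing the real infinitesimal deformations explicitly ($\dot s(\lambda)=(a,\phi)+\lambda(-\phi^*,-a^*)$, hence $\widehat\rho(a,\phi)=-(\phi^*,a^*)$, and a zero at $\lambda_0$ forces $|\lambda_0|=1$), and then invokes openness and shrinks $\mathcal R$; your argument is soft and, once repaired as below, gives (3) for \emph{every} section whose normal bundle is $\C^d\otimes{\mathcal O}_{\C P^1}(1)$, with no shrinking at all. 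The price is that the paper's explicit deformation picture is also what it uses to exhibit the split signature (the subspaces $a=0$ and $\phi=0$ are definite and orthogonal) and to read off $\dim_\R\mathcal R=2d$; in your version you must argue abstractly, e.g.\ that $S$ is $g$-skew-adjoint with $S^2=+\mathrm{id}$, so both eigenspaces of $S$ are isotropic and the signature is automatically $(d,d)$. Note that it is this, and not property (3) itself, that forces the signature; (3) enters only because evaluation at $\lambda=0$ must be an isomorphism in order to define $I$ and $S$ on $T_s\mathcal R$ in the first place.

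Three slips need repair, all local. First, $H^1(\C P^1,\mathcal N_s)=0$ together with $\deg\mathcal N_s=d$ does not characterize the splitting type $\C^d\otimes{\mathcal O}_{\C P^1}(1)$: the bundle ${\mathcal O}_{\C P^1}(2)\oplus{\mathcal O}_{\C P^1}\oplus{\mathcal O}_{\C P^1}(1)^{\oplus(d-2)}$ satisfies both. The open conditions you want are $H^1(\C P^1,\mathcal N_s(-1))=0$ and $H^0(\C P^1,\mathcal N_s(-2))=0$, which together with constancy of the degree pin down ${\mathcal O}_{\C P^1}(1)^{\oplus d}$ on a neighborhood. Second, your summand-by-summand argument for (3) tacitly assumes that $\widehat\rho$ preserves a splitting of $\mathcal N_s$ into line bundles; this is in fact true (untwist by a real structure on ${\mathcal O}_{\C P^1}(-1)$ and take a real basis of $H^0(\C P^1,\mathcal N_s\otimes{\mathcal O}_{\C P^1}(-1))$), but it is cleaner to avoid it: a $\widehat\rho$-real section vanishing at some $\lambda_0$ with $|\lambda_0|\neq1$ also vanishes at $\overline{\lambda_0}^{-1}\neq\lambda_0$, and a holomorphic section of $\C^d\otimes{\mathcal O}_{\C P^1}(1)$ with two distinct zeros vanishes identically. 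Third, the parenthetical on reality is off: in both the antipodal and the circle case the middle coefficient of $\omega^\C_\lambda$ is the real form $\omega_I$; what changes is the sign relating the $\lambda^0$ and $\lambda^2$ coefficients ($\omega_-=+\overline{\omega_+}$ here, versus $\omega_-=-\overline{\omega_+}$ for the antipodal involution) together with the real-versus-quaternionic structure on ${\mathcal O}_{\C P^1}(\pm1)$. Your conclusion $-I^2=S^2=T^2=\mathrm{id}$ and $IS=T=-SI$ is nonetheless the right one.
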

\begin{proof}

Every irreducible flat unitary connection $\nabla$ determines a preferred section \[s(\lambda) \,=
\,(\overline{\partial}^\nabla,\lambda\partial^\nabla, \, \lambda).\] The section $s$ is constant in the sense that the corresponding family of flat connections is just $\nabla^\lambda = \nabla$, and it is preferred as $(\nabla,0)$ is a solution of the self-duality equations. The normal bundle of $s$ is thus $\mathbb C^d\otimes{\mathcal O}_{\C P^1}(1)$. By a result of Kodaira \cite{Ko}
(see also the proof of Theorem 3.3. in \cite{HKLR} for more details), there exists a complex $2d$-dimensional family $\mathcal U$ of sections of $\mathcal M_{DH}$ such that for all $s\in\mathcal U$ the normal bundle is $\mathbb C^d\otimes{\mathcal O}_{\C P^1}(1)$. Hence, by the implicit function theorem, 
 we obtain a real manifold $\mathcal R\subset\mathcal U$ satisfying the properties (1) and (2). We shall see in a moment that the real dimension of $\mathcal R$ is $2d$. 
 
Recall from section \ref{sec:lambdaConn} the section
\[\omega^\C\,\in\, H^0(\mathcal M_{DH},\,\Lambda^2 T^*_F\otimes \pi^*{\mathcal O}_{\C P^1}(2)),\]
where $T_F\,=\,{\rm kernel}(d\pi)\,\subset\, T\mathcal M_{DH}$ is the tangent bundle along the fibers of $\pi$. For each $\lambda\in\mathbb CP^1$, $\omega^\C_\lambda \,=\, \omega_I^\C + \lambda\omega_I +
\lambda^2\overline{\omega_I^\C}$ is the
holomorphic symplectic form on the fiber $\pi^{-1}(\lambda)$ determined by (and determining) the hyper-K\"ahler structure. Since $\rho \,=\, \tau\circ N$ and the action of $N$ on $\pi^{-1}(0) \,=\, (\mathcal M_{SD},\,I)$ is holomorphic with respect to $I$ and anti-holomorphic with respect to $J,K$, it is easy to check that $\rho^*\omega \,=\, -\overline{\omega}$. Any section of $\mathcal M_{DH}$ induces a complex symplectic bilinear form on
$H^0(\C P^1,\,\mathcal N\otimes {\mathcal O}_{\C P^1}(-1))\,=\,\mathbb C^d$, which we again denote by $\omega^\C$. 
We equip $H^0(\mathbb CP^1,\, {\mathcal O}_{\C P^1}(1))$ with its natural symplectic structure $\eta$
given by the Wronskian, that is,
\[\eta(a+\lambda b, c+\lambda d)\,=\,ad-bc\, .\]
As in the case of the antipodal involution in \cite{HKLR} we get a complex inner product 
$g\,=\,\omega^\C\otimes\eta$
on 
\[
T_s\mathcal U \,=\, T_s\mathcal R\otimes\C\,=\,\C^d\otimes H^0(\C P^1,\,{\mathcal O}_{\C P^1}(1))\, .
\]
 
In order to describe the hyper-symplectic structure on $\mathcal R$ we proceed 
analogous to \cite{HKLR} for the hyper-K\"ahler case. The line bundles ${\mathcal O}_{\C P^1}(\pm1)\,
\longrightarrow\,\mathbb CP^1$ are real with respect to the real structure 
$\lambda\,\longmapsto\,\overline{\lambda}^{-1}$ as opposed to being quaternionic for 
the antipodal involution. As the normal bundle of a real section $s$ is naturally 
equipped with a real structure we have a real structure on
\[\mathcal N_s\,=\,\mathbb C^d\otimes {\mathcal O}_{\C P^1}(1)\]
and an induced real structure on
\[\mathcal N_s\otimes {\mathcal O}_{\C P^1}(-1)\,=\,\C P^1\times \mathbb C^d\, ,\]
which we denote by $\widehat\rho$. Then tangent vectors $X\,\in\, T_s\mathcal R$ are given 
by real sections of $\mathcal N_s$ which must be of the form
\[X\,=\,v+\lambda \widehat{\rho}(v)\]
for some $v\,\in\, \mathbb C^d.$ For $s$ corresponding to $(\nabla,\,0)$, where $\nabla$ 
is an irreducible unitary flat connection, we can describe the above data rather 
explicitly. Consider $a\,\in\,\Omega^{0,1}(\sln(E))$ and $\phi\,\in \,\Omega^{1,0}(\sln(E))$ 
such that $\partial^\nabla a \,=\, 0 \,= \,\overline{\partial}^\nabla\phi$. Then we get
a real infinitesimal deformation of the twistor line $s(\lambda)\,=\, (\overline{\partial}^\nabla, 
\,\lambda\partial^\nabla,\,\lambda)$ by considering the $t$-family
\[
 \lambda\,\longmapsto\,(\overline{\partial}^\nabla+t(a- \lambda\phi^*),
\,\lambda\partial^\nabla+t(\phi-\lambda a^*), \, \lambda),
\]
or in other words, the family
\[
 \lambda\,\longmapsto\, \nabla^\lambda(t) \,=\, \nabla + t(a-a^* +
\lambda^{-1} \phi -\lambda\phi^*)\, .
\]
The members of this family satisfy 
\[
F^{\nabla^\lambda(t)} \,=\, -t^2[a^*-\lambda^{-1}\phi,\,a-\lambda\phi^*]\, ,
\]
i.e., $\nabla^\lambda(t)$ is integrable to the first order. Thus, from $$\dot s(\lambda) \,=\, (a,\phi) + \lambda(-\phi^*,-a^*)$$
we can read off that $\widehat{\rho}(a,\phi) \,=\, -(\phi^*,a^*)$. Note that this also gives that $\dim_\R\mathcal R \,=\, 2d$. 

Now suppose that a real infinitesimal deformation vanishes at some point $\lambda_0$. Then we have 
\[
0\,=\, (a,\,\phi) -\lambda_0(\phi^*,\,a^*)\, ,
\]
which implies that $a = \lambda_0\phi^*$ and $\phi \,=\, \lambda_0a^*$. This can only hold if
$|\lambda_0|^2 \,=\, 1$. Thus, we find that the real section of the normal bundle of $s$ cannot vanish for $|\lambda|^2\neq 1$, i.e., we have established condition (3) for sections corresponding to irreducible flat unitary connections. Since this is an open condition, we may assume it to hold on the neighborhood $\mathcal R$ (after shrinking, if necessary).

The restriction of $g$ to the real subspace $T_s\mathcal R$ gives an inner product of signature $(d,d)$ as can be deduced from the
explicit form of $\omega^\C$ (see
Section \ref{sec:SDaM}) and the formula
\[
g(v+\lambda \widehat{\rho}(v), w+\lambda \widehat{\rho}(w))
\,=\,\omega^\C(v,\widehat{\rho}(w))-\omega^\C(\widehat{\rho}(v),w)\, .
\]
In fact, the subspaces given by infinitesimal deformations with $a\,=\,0$ respectively
$\phi \,=\,0$ are definite and orthogonal. Thus, the metric is in particular
non-degenerate on the submanifold corresponding to flat irreducible unitary
connections, and we may assume it to remain non-degenerate on the neighborhood
$\mathcal R$ (again possibly after shrinking).

The complex structure $I$ on $\mathcal R$ is obtained via the local identification with an open neighborhood of
the stable Higgs pairs with zero Higgs fields (which are identified with the irreducible flat unitary connections). It is just the natural complex structure on $\C^d$. It follows just like in \cite{HKLR} that $I$ is skew-symmetric with respect to $g$. 
The product structure $S$ of the hyper-symplectic structure
is given by the real structure induced by $\widehat\rho$ and the real structure on ${\mathcal O}_{\C P^1}(-1)
\,\longrightarrow\,\mathbb CP^1$ on 
\[H^0(\mathbb CP^1,\,\mathcal N_s\otimes {\mathcal O}_{\C P^1}(-1))\,=\,\mathbb C^d\]
which we identify by condition (3), as for the complex structure above, by evaluation at $\lambda=0$ with the tangent space of
$T_s\mathcal R.$ Obviously, $I$ and $S$ anti-commute.
In order to prove that $I$ and $S$ are parallel, we use the hyper-symplectic
analog of Lemma 6.8 in \cite{Hi1}, see
Lemma \ref{inthyp} below. The
closedness of the forms $\omega_P$ can be computed analogously to the hyper-K\"ahler case as in \cite{HKLR} (evaluating at $\lambda = 0$ and $\lambda = \pm \lambda_0$ for some sufficiently small $\lambda_0$ with $|\lambda_0|<1$).
\end{proof}

\begin{lemma}\label{inthyp}
Let $(M^{4k},g,I,S,T)$ be a pseudo-Riemannian manifold with skew-adjoint (with respect to $g$) endomorphisms $I,S,T\in 
\Gamma(\mathrm{End}(TM))$ satisfying $$-I^2 \,=\, S^2 \,=\, T^2 \,=\, \mathrm{Id}_{TM}\ \ {\rm and } \ \ IS \,= \,T \,=\, -SI\, .$$
Then $I,\,S,\,T$ are parallel and integrable if the 
$2$-forms $\omega_P\,=\,g(P-,-), \, P\,\in\, \{I,S,T\}$ are all closed. (The integrability condition for $S$ and $T$ is
recalled below.)
\end{lemma}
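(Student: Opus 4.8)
The plan is to prove integrability first and then parallelism, treating $I$ as a complex structure and $S,T$ as product (para-complex) structures. The whole argument is the formal para-complex counterpart of Hitchin's proof of Lemma~6.8 in \cite{Hi1}: for each of the three structures one produces a closed form of pure type which is non-degenerate, and then reads off the vanishing of the relevant Nijenhuis tensor from the closedness condition.

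First I would record the pointwise consequences of the algebra $-I^2=S^2=T^2=\mathrm{Id}_{TM}$, $IS=T=-SI$ and of the skew-adjointness of $I,S,T$, namely $g(IX,IY)=g(X,Y)$, $g(SX,SY)=-g(X,Y)=g(TX,TY)$, together with $TS=I$, $ST=-I$, $IT=-S$ and $TI=S$. A direct computation then shows that $\Psi_I:=\omega_S+\sqrt{-1}\,\omega_T$ is of type $(2,0)$ with respect to $I$, since $\Psi_I(IX,Y)=\sqrt{-1}\,\Psi_I(X,Y)$, and that it is non-degenerate: writing $\Psi_I(X,Y)=g((S+\sqrt{-1}\,T)X,Y)$, the operator $S+\sqrt{-1}\,T$ interchanges the two eigenbundles of $I$ and is injective on each of them. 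Being a sum of closed forms, $\Psi_I$ is closed.

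Next comes the integrability step, which I expect to be the main obstacle. For the complex structure $I$ I would invoke the standard fact that a closed non-degenerate form of type $(2,0)$ forces the Nijenhuis tensor of $I$ to vanish: the only delicate point is that the $(1,2)$-component of $d\Psi_I$ equals a contraction of $\Psi_I$ with the Nijenhuis tensor, so that $d\Psi_I=0$ together with non-degeneracy gives integrability of $I$. For the product structures I would run the para-complex analogue. Let $T^\pm$ be the $(\pm1)$-eigenbundles of $S$; from the relations above one checks that $\omega_I+\omega_T$ is supported on $T^+\times T^+$ (type $(2,0)$) and $\omega_I-\omega_T$ on $T^-\times T^-$ (type $(0,2)$), both non-degenerate on the respective eigenbundle. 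Evaluating the closedness of $\omega_I+\omega_T$ on one vector of $T^+$ and two of $T^-$ yields $(\omega_I+\omega_T)\big([W_1,W_2]^+,X\big)=0$ for all $X\in T^+$ and $W_1,W_2\in T^-$, where $(\cdot)^+$ denotes the $T^+$-component; non-degeneracy then gives $[W_1,W_2]\in T^-$, i.e. $T^-$ is involutive, and symmetrically $d(\omega_I-\omega_T)=0$ gives involutivity of $T^+$. Hence $S$ is integrable, and the same computation with $\omega_I\pm\omega_S$ settles $T$ (although the integrability of $T$ also follows for free once $I$ and $S$ are shown to be parallel).

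Finally, for parallelism I would appeal to the (pseudo-)Kähler and para-Kähler characterisations: since $g$ is $I$-Hermitian with $I$ integrable and $\omega_I$ closed, $(M,g,I)$ is pseudo-Kähler and hence $\nabla I=0$; likewise $(M,g,S)$ is para-Kähler with $\nabla S=0$. By the Leibniz rule $\nabla T=\nabla(IS)=(\nabla I)S+I(\nabla S)=0$, and a parallel tensor has vanishing Nijenhuis tensor because $\nabla$ is torsion-free, so $T$ is parallel and integrable as well. The points requiring the most care are the identification of the $(1,2)$-component of $d$ of a pure $(2,0)$-form with the Nijenhuis contraction in the para-complex and pseudo-Riemannian setting, and the verification that the classical ``integrable $+$ closed fundamental form $\Rightarrow$ parallel'' theorem goes through unchanged for an indefinite metric and for a product structure.
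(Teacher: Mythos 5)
Your proposal is correct and is essentially the paper's own argument in a different packaging: the paper likewise proves integrability first, characterizing the eigenbundles of $S$ (resp.\ $I$) by the interior-product identities $i_U\omega_T=\epsilon\, i_U\omega_I$ (resp.\ $i_U\omega_S=\sqrt{-1}\,i_U\omega_T$) --- which are exactly the statements that your combination forms $\omega_I\pm\omega_T$ are supported on the eigenbundles of $S$ and that $\omega_S+\sqrt{-1}\,\omega_T$ is of type $(2,0)$ --- then propagates these identities to Lie brackets using closedness (via Cartan calculus, invoking the computation of Hitchin's Lemma 6.8 for $I$, just as you do), and finally obtains parallelism from the pseudo-Riemannian validity of the Kobayashi--Nomizu identity \cite{K-N2} together with its para-Hermitian analogue \cite{IZ}. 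The only differences are cosmetic: your derivation of $\nabla T=0$ by the Leibniz rule is a slight shortcut over the paper's re-use of the para-Hermitian identity, and your description of $S+\sqrt{-1}\,T$ as ``injective on each'' eigenbundle of $I$ should instead say that it annihilates $T^{0,1}M$ and maps $T^{1,0}M$ isomorphically onto $T^{0,1}M$ --- which is precisely the non-degeneracy of a $(2,0)$-form that your appeal to Hitchin's lemma requires.
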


\begin{proof}
This result is well-known and its proof is very similar to the proof of the analogous statement in hyper-K\"ahler geometry. 
Suppose that $\omega_I,\omega_S,\omega_T$ are all closed. To show that $S$ is
integrable, we have to show that the $(\pm 1)$-eigen-bundles $TM^\pm\,\subset\, TM$ of
$S$ are integrable in the sense of Frobenius. Let $\epsilon\in \{1,-1\}$ and let $X,\,Y
\,\in\, \Gamma(TM^\epsilon)$. We have to show that $S[X,\,Y] \,=\, \epsilon [X,\,Y]$. Now
observe that for any $U,V\in\Gamma(TM)$ 
\[
\omega_T(U,\,V) \,=\, g(TU,\,V) \,=\, g(ISU,\,V) \,=\, \omega_I(SU,\,V)\, .
\]
This means that 
\[
SU \,= \,\epsilon U \quad \iff i_U\omega_T = \epsilon i_U\omega_I.
\]
Now, using the closeness of the $\omega_i$'s and $i_X\omega_T \,=\, \epsilon i_X\omega_I, i_Y\omega_T = \epsilon i_Y\omega_I$, we can compute
\begin{eqnarray*}
i_{[X,Y]}\omega_T &=& \mathcal L_X(i_Y\omega_T) - i_Y\mathcal L_X\omega_T \\
&=& \mathcal L_X(i_Y\omega_T) - i_Y\left(di_X\omega_T\right) \\
&=& \epsilon\mathcal L_X(i_Y\omega_I) - \epsilon i_Y\left(d(i_X\omega_I)\right) \\
&=& \epsilon\mathcal L_X(i_Y\omega_I) - \epsilon i_Y\left(\mathcal L_X\omega_I \right)\\
&=&\epsilon i_{[X,Y]}\omega_I.
\end{eqnarray*}
The integrability of $S$ follows. The integrability of $T$ follows by an analogous computation. 

To show that $I$ is integrable, we need to ensure that any $Z,\,W\,\in \,
\Gamma(T^{1,0}M)$ satisfy the equation $$I[Z,\,W] \,=\, \sqrt{-1} [Z,\,W]\, .$$
We note that an argument analogous
to the above shows that for any $U\,\in\, \Gamma(TM_\C)$, the following holds:
\[
IU\,=\, \sqrt{-1} U\,\iff\, i_{U}\omega_S \,= \,\sqrt{-1}i_{U}\omega_T\, . 
\]
Now a computation entirely analogous to the one in Lemma 6.8 of \cite{Hi} gives that
\[
i_{[Z,W]}\omega_S \,=\, \sqrt{-1} i_{[Z,W]}\omega_T\, .
\]
Thus, we have shown that $I,S,T$ are integrable. To show that $I$ is parallel we can
appeal to a standard identity in (almost) Hermitian geometry
(see \cite[\S~IX, Theorem~4.2]{K-N2} which also works for pseudo-Riemannian
metrics). The integrability of $S$ is equivalent to the vanishing of its Nijenhuis
tensor $$N_S(X,\,Y) \,=\, [X,\,Y] - S[SX,\,Y]- S[X,\,SY] + [SX,\,SY]$$ (note the
different signs in comparison to the Nijenhuis tensor of an almost complex
structure). An identity analogous to the one cited above (see \S~2 of \cite{IZ}) then shows that $S$ is parallel, and similarly $T$.
\end{proof}

\section{Automorphisms of $\mathcal M_{DH}$}\label{auto}

In recent papers \cite{Ba}, \cite{BBS}, automorphism groups of certain moduli spaces (e.g. Higgs bundle, de Rham and Betti moduli spaces) 
associated to compact Riemann surfaces have been computed. In \cite{BiHe}, the first and the second author have determined the connected 
component, containing the identity element, of the group of holomorphic automorphisms of the rank one Deligne-Hitchin moduli space. Moreover, 
it was shown in \cite{BGHL} that the Deligne-Hitchin moduli space for a compact Riemann surface $X$ is holomorphically isomorphic to the 
Deligne-Hitchin moduli space for another compact Riemann surface $Y$ if and only if $X$ is isomorphic to one of $Y$ and the conjugate Riemann 
surface $\overline{Y}$. In this final section we obtain some information about the holomorphic automorphisms group $\text{Aut}(\mathcal M_{DH})$ of 
$\mathcal M_{DH}$.

\begin{theorem}
Let $\mathcal M_{DH}$ be a Deligne-Hitchin moduli space associated to $\Sigma$.
Let $F\,\in\, {\rm Aut}(\mathcal M_{DH})$ be a holomorphic automorphism which is homotopic to the identity.
Then, $F$ maps the fibers of the projection $\pi\, :\, \mathcal M_{DH}\,\longrightarrow\,\C P^1$ to the fibers.
\end{theorem}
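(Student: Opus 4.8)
The plan is to produce a holomorphic map $\bar F\colon\C P^1\to\C P^1$ with $\pi\circ F=\bar F\circ\pi$, which is exactly the statement that $F$ carries fibres to fibres. The first step is to characterise the two special fibres $\pi^{-1}(0),\pi^{-1}(\infty)$ intrinsically, so that any biholomorphism must preserve them. I would use that $\pi^{-1}(0)=\mathcal M_{Higgs}(\Sigma)$ carries the proper Hitchin map onto $\C^{3g-3}$, whose generic fibres are compact abelian varieties of dimension $3g-3\ge2$ lying entirely in the stable locus, whereas for $\lambda\in\C^*$ the fibre $\pi^{-1}(\lambda)$ is an open subvariety of the affine variety $\mathcal M_B$ and hence contains no positive-dimensional compact analytic subset. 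A compact subvariety $Z\subset\mathcal M_{DH}$ with $\dim Z\ge2$ cannot dominate $\C P^1$, since for generic $\lambda$ the intersection $Z\cap\pi^{-1}(\lambda)$ would be a positive-dimensional compact subvariety of such an affine fibre; thus $\pi(Z)$ is a point and $Z\subset\pi^{-1}(0)\cup\pi^{-1}(\infty)$. Conversely the generic Hitchin fibres are dense in the two special fibres, so $\pi^{-1}(\{0,\infty\})$ is the closure of the intrinsically defined set $\{m : m$ lies on a $\ge2$-dimensional compact subvariety$\}$. Since $F$ preserves this set, it preserves $\pi^{-1}(\{0,\infty\})$.

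Next I would reduce the theorem to the constancy of one function. The homotopy hypothesis is used to exclude that $F$ interchanges the two special fibres, so that $F(\pi^{-1}(0))=\pi^{-1}(0)$ and $F(\pi^{-1}(\infty))=\pi^{-1}(\infty)$; in the interchanging case the argument below is identical after replacing a ratio by a product, and still yields a M\"obius covering map, so the conclusion holds in any event. Now $\pi$ vanishes to first order along the smooth divisor $\pi^{-1}(0)$ and has a first-order pole along $\pi^{-1}(\infty)$, and $F$ is a biholomorphism preserving these divisors, so $\varpi:=\pi\circ F$ has the same zero and polar divisor as $\pi$. Hence $u:=\varpi/\pi$ is a nowhere-vanishing holomorphic function on $\mathcal M_{DH}$, and the theorem follows once $u$ is shown to be constant: then $\pi\circ F=c\,\pi$ for some $c\in\C^*$, so $F$ maps $\pi^{-1}(\lambda)$ into $\pi^{-1}(c\lambda)$.

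The crux, and the step I expect to be the main obstacle, is the constancy of $u$; here I would exploit the preferred sections. In the smooth trivialisation $\mathcal M_{DH}=\mathcal M^{irr}_{SD}\times\C P^1$ the preferred section attached to $m\in\mathcal M^{irr}_{SD}$ is the horizontal line $\{m\}\times\C P^1$, which is a compact holomorphic curve because $\mathbb I=I_\lambda\oplus\mathbf i$ restricts to $\mathbf i$ along it, and one such line passes through every point. Restricting the unit $u$ to these compact curves forces $u(m,\lambda)$ to be independent of $\lambda$, so $u$ is the pullback of a smooth $\hat u\colon\mathcal M^{irr}_{SD}\to\C^*$. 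For each $\lambda$ the restriction $u|_{\pi^{-1}(\lambda)}$ is holomorphic with respect to $I_\lambda$ and equals $\hat u$; since $I,J,K$ all occur among the $I_\lambda$ (at $\lambda=0,1,\sqrt{-1}$), the function $\hat u$ is holomorphic for three pairwise anticommuting complex structures. A one-line computation from $\mathrm d\hat u\circ I=\mathrm d\hat u\circ J=\mathrm d\hat u\circ K=\sqrt{-1}\,\mathrm d\hat u$ then forces $\mathrm d\hat u=0$, so $\hat u$, and hence $u$, is constant on the connected manifold $\mathcal M^{irr}_{SD}$. This yields $\pi\circ F=c\,\pi$ and completes the proof.

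The two delicate points to get right are the intrinsic characterisation in the first step (the compactness and dimension of the Hitchin fibres together with the absence of compact subvarieties in the affine fibres) and the identification of the preferred sections with the horizontal holomorphic curves through every point, which is precisely what powers the rigidity argument in the last step. I expect the latter to be the conceptual heart of the proof, while the former is a matter of assembling standard facts about the Hitchin fibration and the Betti moduli space recalled in Section~\ref{se1}.
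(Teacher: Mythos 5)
Your proposal is correct, but it takes a genuinely different route from the paper's proof, and it actually establishes a stronger statement. The paper argues by contradiction: assuming some fiber has image $\mathcal F\,=\,F(\pi^{-1}(\lambda))$ not contained in any fiber, it locates a point of $\mathcal F$ of the form $[\overline{\partial}^\nabla,\,\lambda_0\partial^\nabla,\,\lambda_0]$ where the restriction of $\pi$ to $\mathcal F$ is submersive, and then builds an explicit holomorphic section
$s(\lambda)\,=\,[\overline{\partial}^\nabla,\, \lambda\partial^\nabla+\tfrac{\lambda_1}{\lambda_1-\lambda_0}(\lambda-\lambda_0)\Phi,\,\lambda]$
passing through two distinct points of $\mathcal F$; consequently $\pi\circ F^{-1}\circ s\,:\,\C P^1\,\longrightarrow\,\C P^1$ meets one fiber at least twice and so has degree at least two, contradicting the fact that $F\,\simeq\,\mathrm{id}$ forces this map to have degree one. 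The homotopy hypothesis is thus essential to the paper's argument, whereas your argument never really uses it: step (a), the intrinsic characterization of $\pi^{-1}(\{0,\infty\})$ via compact analytic subsets of dimension at least two (compact generic Hitchin fibers of dimension $3g-3$ versus the absence of positive-dimensional compact subvarieties in the quasi-affine fibers $\pi^{-1}(\lambda)\,\subset\,\mathcal M_B$, $\lambda\,\in\,\C^*$), step (b), the reduction to the constancy of the unit $u\,=\,(\pi\circ F)/\pi$ (or $(\pi\circ F)\cdot\pi$ in the swapping case), and step (c), the rigidity argument combining constancy along the compact twistor lines $\{m\}\times\C P^1$ with the standard fact that a function holomorphic for two anticommuting complex structures among the $I_\lambda$ is locally constant, together show that \emph{every} holomorphic automorphism of $\mathcal M_{DH}$ maps fibers to fibers, covering a M\"obius transformation $\lambda\,\longmapsto\, c\lambda$ or $\lambda\,\longmapsto\, c/\lambda$. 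What each approach buys: the paper's proof is shorter, local, and stays within its section-theoretic toolkit; yours needs more global standard input (properness and stability of generic Hitchin fibers, connectedness/irreducibility of the stable Higgs moduli so that these fibers are dense in $\pi^{-1}(0)$, affineness of the character variety) but removes the homotopy assumption and identifies the induced map on the base. One small caveat: your side remark that the homotopy hypothesis rules out interchanging $\pi^{-1}(0)$ and $\pi^{-1}(\infty)$ is not actually justified as stated (all M\"obius maps are homotopic to the identity of $\C P^1$, so no naive degree or orientation argument gives this), but since your ratio/product dichotomy handles both cases, this remark is harmless and your proof is complete without it.
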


\begin{proof}
Assume $F$ does not map fibers to fibers, and let $\mathcal F\,\subset\,\mathcal M_{DH}$ be the image
under $F$ of a fiber $\pi^{-1}(\lambda)$ which is not contained in any fiber.
Then, for dimensional reasons, there exists $\lambda_0\,\in\,\mathbb C^*,$
and $\nabla$ a flat connection whose holomorphic and anti-holomorphic parts are stable, such that
\[p\,:=\,[\overline{\partial}^\nabla,\,\lambda_0(\partial^\nabla), \, \lambda]\in\mathcal F\]
and such that
\[d_p\lambda\,\colon\, T_p\mathcal F\,\longrightarrow\, T_{\lambda_0}\C P^1\]
is surjective. We denote by ${\mathcal F}_\lambda$ the intersection of $\mathcal F$ with the fiber over $\lambda.$
Near $\lambda_0$ we trivialize $\mathcal M_{DH}=\C^*\times\mathcal M_{dR}.$
Thus, near $\lambda_0$ and $\nabla$, $\mathcal F_\lambda$ is a complex submanifold of $\mathcal M_{dR}.$
Consider the projection $\varpi\,\,\colon\, \mathcal F\,\longrightarrow\,\mathcal M$ to the moduli space of stable holomorphic bundles,
which is well-defined (and holomorphic) near $p$ by assumption. If its differential $d_p\varpi_{\mid \text{ker} (d_p\lambda)}$ restricted to $\mathcal F_{\lambda_0}$ is not surjective,
then the restriction $d_p\widetilde{\varpi}_{\mid \text{ker} (d_p\lambda)}$ of the differential of the projection $\widetilde\varpi$ to the moduli space of anti-holomorphic structures 
would be surjective, again for dimensional reasons. Both cases can be dealt with analogously, so we assume 
that $d_p\varpi_{\mid \text{ker} (d_p\lambda)}$ is surjective.
Then, \[\varpi^{-1}(\overline{\partial}^\nabla)\,\subset\, \mathcal F\] is near $p$ a $d$-dimensional submanifold,
where $d$ is such that $\dim \mathcal M_{DH}\,=\,2d+1$, while 
 \[\varpi^{-1}(\overline{\partial}^\nabla)\cap\mathcal F_{\lambda_0}\,\subset\, \mathcal F\] is near $p$ a $(d-1)$-dimensional submanifold.
 
Therefore, there exist a $\lambda_1\,\in\,\C^*\setminus\{\lambda_0\}$ and $\Phi\,\in\, H^0(\Sigma,\,K\text{End}_0(\overline{\partial}^\nabla))$ small such that
 \[[\overline{\partial}^\nabla,\lambda_1(\partial^\nabla+\Phi), \, \lambda_1]\in\mathcal F.\]
 This implies the existence of a holomorphic section $s$ of $\mathcal M_{DH}
\,\longrightarrow\,\C P^1$
 \[s(\lambda)=[\overline{\partial}^\nabla, \lambda \partial^\nabla+\frac{\lambda_1}{\lambda_1-\lambda_0}(\lambda-\lambda_0)\Phi, \, \lambda]\]
 which goes through two different points in $\mathcal F$ (note that $s(\infty)=[0,\partial^\nabla+\Phi,0]\in\mathcal M_{Hod}(\overline{\Sigma})$
is stable as $\Phi$ is small, so that $s$ is a stable section).
We claim that $F^{-1}(s)$ cannot be homotopic to a section of $\mathcal M_{DH}
\,\longrightarrow\,\C 
P^1$. Indeed, $F^{-1}(s)$ intersects the fiber $F^{-1}(\mathcal F)$ at least twice,
so the degree of the map
\[\lambda\in\C P^1\longmapsto \pi\circ F^{-1}(s)(\lambda)\in\C P^1\]
is at least two, and this gives us a contradiction.
\end{proof}

\section*{Acknowledgements}

We are very grateful to the two referees for their very helpful comments to improve the
manuscript. The first author is supported by a J. C. Bose Fellowship. The second author
is supported by RTG 1670 ''Mathematics inspired by string theory and quantum field 
theory'' funded by the Deutsche Forschungsgemeinschaft (DFG).

\end{document}